\theoremstyle{plain}
\newtheorem{thm}{\protect\theoremname}[section]
  \theoremstyle{remark}
  \newtheorem{rem}[thm]{\protect\remarkname}
\numberwithin{equation}{section}
\numberwithin{figure}{section}
  \theoremstyle{plain}
  \newtheorem{lem}[thm]{Lemma}
\theoremstyle{example}
  \newtheorem{exam}[thm]{Example}
  \theoremstyle{plain}
  \theoremstyle{plain}
  \newtheorem{cor}[thm]{Corollary}
  \newcounter{casectr}
\def\sqr#1#2{{\,\vcenter{\vbox{\hrule height.#2pt\hbox{\vrule width.#2pt
height#1pt \kern#1pt\vrule width.#2pt}\hrule height.#2pt}}\,}}
\def\bo{\sqr44\,}
\def\q{\quad}
\def\wto{\overset{w}{\to}}
\newcommand{\vp}{\varepsilon}
  \providecommand{\remarkname}{Remark}
\providecommand{\theoremname}{Theorem}
\begin{document}

\title[C-H. Chu, M. Rigby]{}

\centerline{}

\noindent {\Large \bf Horoballs and iteration of holomorphic maps\\
on bounded symmetric domains}\\

\author[C-H. Chu, M. Rigby]{}
\noindent{\large Cho-Ho Chu}$\,^{a,*}$, {\large Michael Rigby}$\,^b$

\thanks{* Corresponding author.}
\thanks{{\it E-mail addresses\,}: c.chu@qmul.ac.uk (C-H. Chu), m.rigby@qmul.ac.uk (M. Rigby)}

\maketitle
\vspace{-.3in}

\noindent $^a$ {\footnotesize \it School of Mathematical Sciences, Queen Mary, University of London, London E1 4NS, UK}\\
$^b$ {\footnotesize \it School of Mathematical Sciences, Queen Mary, University of London, London E1 4NS, UK}

\begin{abstract}
Given a fixed-point free compact holomorphic self-map $f$ on a bounded symmetric domain $D$, which may
be infinite dimensional, we establish the existence of a family $\{H(\xi, \lambda)\}_{\lambda >0}$
of convex $f$-invariant domains at a  point $\xi$ in the  boundary  $\partial D$ of $D$, which generalises completely Wolff's theorem for the open unit disc in $\mathbb{C}$. Further, we construct horoballs at $\xi$ and show that they are exactly the $f$-invariant domains when $D$ is of finite rank. Consequently, we show in the latter case that the limit functions of the iterates
$(f^n)$ with weakly closed range all accumulate in one single boundary component of $\partial D$.\\
\end{abstract}

\noindent {\footnotesize {\it MSC:} 32H50; 32M15; 17C65; 58B12; 58C10}\\
\noindent {\footnotesize {\it Keywords:} Bounded symmetric domain, holomorphic map,
horoball,  iteration, Cartan domain  }

\section{Introduction}

The invariant domains and iteration of a holomorphic self-map on a one-dimensional
bounded symmetric domain is well-understood. In particular, given a fixed-point
free holomorphic self-map $f$ on the unit disc $\mathbb{D}= \{z\in \mathbb{C}:
|z|<1\}$, the celebrated Wolff's theorem \cite{w1}, which can be viewed as an analogue
 of the Schwarz lemma, states that there is a point $\xi$ in
the boundary $\partial \mathbb{D}$ and a family $\{H(\xi, \lambda)\}_{\lambda >0}$ of $f$-invariant domains,
 which covers $\mathbb{D}$ and consists of Euclidean open discs in $\mathbb{D}$ with closure internally tangent to
 $\xi$, in other words, they  are horodiscs of horocentre $\xi$.
 An immediate consequence is the Denjoy-Wolff Theorem \cite{d,w} which asserts the convergence of
the iterates $(f^n)$ to the constant function $h(\cdot) = \xi$. We refer to \cite{bu} for a succinct exposition
of the details and historical remarks.

Although both theorems have been extended completely to Euclidean balls \cite{h} (see also \cite{m}) and various forms of generalisation to other domains in higher dimension have been shown by several authors (e.g.\,\cite{a,bud,chen,cm1,g,kk,m}), a unified treatment for bounded
symmetric domains of all dimensions and a description of the invariant domains resembling Wolff's horodiscs $\{H(\xi, \lambda)\}_{\lambda >0}$ seem wanting, apart from some results in \cite{lie,cr,pauline}.
In this paper, we consider all bounded symmetric domains, including the infinite dimensional ones, and adopt an approach using Jordan theory to the question of invariant domains and iteration of holomorphic maps. This enables us to give a complete generalisation of Wolff's theorem to all bounded symmetric domains and a version of the Denjoy-Wolff Theorem for finite-rank domains, which also unifies and improves the results in \cite{lie,cr,pauline}.

Given a fixed-point free {\it compact} holomorphic self-map $f$ on a bounded symmetric domain $D$, we establish in Theorem \ref{g} the existence of convex $f$-invariant domains $\{H(\xi, \lambda)\}_{\lambda >0}$ at some boundary point $\xi$ of $D$.
Further, we construct the {\it horoballs} at $\xi$, which generalise Wolff's horodiscs, and show in Theorem \ref{interior} and Remark \ref{final} that in the finite-rank case (including  finite dimensions), the invariant domains $H(\xi, \lambda)$ are exactly the horoballs at $\xi$ and affinely homeomorphic to $D$. Using this extension of Wolff's theorem to finite-rank bounded symmetric domains, we show in Theorem \ref{dw} that all limit functions of the iterates $(f^n)$ with weakly closed range must accumulate in one single boundary component in the boundary  $\partial D$.
This generalises the Denjoy-Wolff Theorem for rank-one bounded symmetric domains, which are the Hilbert balls.
For infinite dimensional domains, however,
the result in Theorem \ref{dw} need not be true without the compactness condition on $f$, even for Hilbert balls
\cite{s}. All holomorphic self-maps $f$ on finite dimensional bounded domains are compact.
To achieve these results, we need various topological properties
of finite-rank bounded symmetric domains, which are shown in Section 4 and Section 5, and may be of some independent interest. We conclude with Example \ref{final} to show that, for a M\"obius transformation $g$ of a finite-rank domain $D$, a limit function of the iterates $(g^n)$ can be a constant map or its image is a whole single boundary component of $\partial D$.

A special feature of the paper is the
 substantial use of the underlying Jordan structures of bounded symmetric
domains and some detailed computation involving the Bergmann operators, M\"obius transformations and
Peirce projections. It is possible that such a Jordan approach may also be fruitful in tackling other problems in
complex geometry including the infinite dimensional case.

We begin by explaining briefly the connections between bounded symmetric domains and Jordan theory, but refer to
\cite{book, u} for more details. A bounded symmetric domain is a bounded open connected set $D$ in a complex Banach space such that each point $a\in D$ is an isolated fixed point of an involutive biholomorphic map  $s_a : D \rightarrow D$, called a {\it symmetry} at $a$. The most important connection to Jordan theory for this work is Kaup's Riemann mapping theorem \cite{kaup} asserting that every bounded symmetric domain is biholomorphic to the open unit ball of a JB*-triple $V$, which is a complex Banach space equipped with a Jordan triple structure.

More precisely, a complex Banach space $V$ is called a {\it JB*-triple} if it
admits a continuous triple product
$\{\cdot,\cdot,\cdot\} : V^3 \longrightarrow V$
which is symmetric and linear in the
outer variables, but conjugate linear in the middle variable, and
satisfies
\begin{enumerate}
\item[(i)] (Triple Identity) $\{x,y,\{a,b,c\}\}=\{\{x,y,a\},b,c\}-\{ a,\{y,x,b\},c\}
+\{a,b,\{x,y,c\}\}$; \item[(ii)] $\|\exp it(a \bo a)\|=1$ for all
$t\in \mathbb{R}$; \item[(iii)] $a \bo a$ has
non-negative spectrum; \item[(iv)] $\|a\bo a\|=\|a\|^2$
\end{enumerate}
for $a,b,c,x,y \in V$, where the box operator
$a\bo b: V\rightarrow V$ is defined by $a\bo b(\cdot) =
\{a,b,\cdot\}$.

Open unit balls of JB*-triples are bounded symmetric domains and
in the case of the complex unit disc $\mathbb{D} \subset \mathbb{C}$, the triple product in $\mathbb{C}$ is given by
$\{a,b,c\} = a\overline b c$, where $\overline b$ is the complex conjugate of  $b$. In fact, a Hilbert space
$V$ is a JB*-triple, with triple product $\{a,b,c\} = (\langle a,b\rangle c + \langle c,b\rangle a)/2$, where
$\langle \cdot, \cdot\rangle$ is the inner product. More generally, the Banach space $\mathcal{L}(H,K)$ of bounded linear operators between Hilbert spaces $H$ and $K$, as well as C*-algebras $\mathcal{A}\subset \mathcal{L}(H,H)$,  are JB*-triples with triple product
$$\{a,b,c\} = \frac{1}{2}(ab^*c +cb^*a) \qquad (a,b,c \in \mathcal{L}(H,K) ~ {\rm or} ~ \mathcal{A})$$
where $b^*$ denotes the adjoint operator of $b$.

Throughout the paper, $D$ will always denote a bounded symmetric domain realised as the open unit
ball of a JB*-triple $V$, with boundary $\partial D = \overline D
\backslash D= \{ v\in V: \|v\|=1\}$.  Besides the box operator $a \bo b$ defined above, which satisfies $\|a\bo b\| \leq \|a\| \|b\|$, a
fundamental operator on a JB*-triple $V$ is the {\it Bergmann operator} $B(a,b): V \rightarrow V$ defined by
$$B(a,b)(x) = x - 2\{a,b,x\} + \{a,\{b,x,b\},a\} \qquad (a,b, x \in V)$$
which is invertible if $\|a \bo b\| <1$. We note that
$$\|B(a,b)(x)\| \leq \|x\| + 2\|a\|\|b\|\|x\| + \|a\|^2\|b\|^2\|x\| = (1+\|a\|\|b\|)^2\|x\|.$$
For $a \in D$, the operator $B(a,a)$ has non-negative spectrum
and hence the square roots
$B(a,a)^{\pm 1/2}$ exist and moreover, we have the useful identity
\begin{equation}\label{baa}
\|B(a,a)^{-1/2}\| = \frac{1}{1-\|a\|^2}
\end{equation}
(cf. \cite[Proposition 3.2.13]{book}).
The Bergmann operator $B(a,b)$ on $\mathcal{L}(H,H)$ can be written as
\begin{equation}\label{baa}
B(a,b)(x) = (\mathbf{1}-ab^*)x(\mathbf{1}-b^*a)
\end{equation}
where $\mathbf{1}$ denotes the identity operator on $H$
\cite[p.\,191]{book}. We note that $\|\mathbf{1}-a^*a\| \leq 1$ for  $\|a\| \leq 1$.
On the complex plane $\mathbb{C}$, we have $B(a,b)(x)= (1-a\overline b)^2 x$.
Wolff's horodisc $H(\xi, \lambda)$, which is a one-dimensional horoball with horocentre $\xi$, has the form
$$ H(\xi, \lambda)= t_\lambda^2 \xi + (1-t_\lambda^2)\mathbb{D},\, \qquad t_\lambda^2 =\lambda/(1+\lambda)$$
and in terms of the Bergmann operator, it takes the form
$$H(\xi, \lambda) = t_\lambda^2 \xi + B(t_\lambda \xi, t_\lambda \xi)^{1/2} \mathbb{D}.$$
It is the latter form of the horodisc $H(\xi, \lambda)$ which will be generalised to all bounded symmetric domains of finite rank.

\section{Invariant domains in bounded symmetric domains}

To explain further the underlying idea in our construction of invariant domains and horoballs, we begin with a fixed-point free holomorphic self-map $f$ on the unit disc $\mathbb{D}$ in the complex
plane. The key in Wolff's theorem is to produce a sequence $(z_k)$ in $\mathbb{D}$ converging to
a boundary point $\xi$, which is used to construct the $f$-invariant domains in the following way. For each $\lambda >0$, let
$D_k(\lambda)$ be a Poincar\'e disc (for sufficiently large $k$), which is the open disc centred at $z_k$, with radius $\tanh^{-1}\, r_k$,  measured by the Poincar\'e distance $\kappa\,$:
$$D_k(\lambda) = \{ z\in \mathbb{D}: \kappa (z,z_k) < \tanh^{-1} r_k\}$$
where $r_k \in (0,1)$ satisfies $1-r_k^2=\lambda(1-|z_k|^2)$.
The Poincar\'e disc is the image $g_{z_k} (\mathbb{D}(0,r_k))$ of the Euclidean disc
$\mathbb{D}(0,r_k)= \{z\in \mathbb{D}: |z| < r_k\}$, under the M\"obius transformation
$$g_{z_k}(z) = \frac{z+z_k}{1+ z\overline z_k} \qquad (z \in \mathbb{D}).$$
The $f$-invariant domain $H(\xi,\lambda)$ is then (the interior of) the {\it `limit'} of
the Poincar\'e discs $D_k(\lambda)$ and is given by
\begin{equation}\label{horo1}
H(\xi,\lambda) = \left\{ z\in \mathbb{D}: \frac{|1-z\overline \xi|^2}{1-|z|^2} < \frac{1}{\lambda} \right\}= \frac{\lambda}{1+\lambda} \xi + \frac{1}{1+\lambda} \mathbb{D}
\end{equation}
which is a disc centred at $\frac{\lambda}{1+\lambda} \xi$ with radius $\frac{1}{1+\lambda}$ and as noted earlier,
its boundary is a horocycle with horocentre $\xi$.
 A crucial observation is that this invariant domain is identical with the sets
\begin{eqnarray}\label{horo2}
H(\xi,\lambda)
= \left\{z\in \mathbb{D}: \lim_k \frac{|1-z\overline z_k |^2}{1-|z|^2} < \frac{1}{\lambda}\right\}
= \left\{z\in \mathbb{D}: \lim_k \frac{1-|z_k |^2}{1-|g_{-z_k}(z)|^2} < \frac{1}{\lambda}\right\}.
\end{eqnarray}

Given a bounded symmetric domain realised as the open unit ball $D$ of a JB*-triple $V$, the M\"obius transformations on $D$ and the Kobayashi distance, which generalises the Poincar\'e distance in $\mathbb{D}$, have an explicit Jordan description and the observation in (\ref{horo2}) enables us to extend Wolff's theorem completely to $D$. We carry this out in this section.

We first need some Jordan tools. Let $D$ be the open unit ball of a JB*-triple $V$ and let $a \in D$. The M\"obius transformation $g_a : D \rightarrow D$, induced by $a$, is a biholomorphic map  given by
$$g_a(z) = a + B(a,a)^{1/2}(\mathbf{1}+ z\bo a)^{-1}(z) \qquad (z \in D)$$ with inverse $g_{-a}$, where
 $\mathbf{1}$ denotes the identity operator on $V$.
The Kobayashi distance $\kappa (x,y)$ between two
points $x$ and $y$ in $D$ can be described in terms of a M\"obius transformation:
$$\kappa (x,y) = \tanh^{-1}\|g_{-x}(y)\| = \tanh^{-1}\|g_{-y}(x)\|.$$ By the Schwarz lemma, a holomorphic self-map $f$ on $D$ satisfies
\begin{equation}\label{contrac}
\|g_{-f(x)}(f(y))\| \leq \|g_{-x}(y)\|
\end{equation}
that is, $f$ is $\kappa$-nonexpansive.
We will often use the following norm estimate
\begin{equation}\label{gab}
\frac{1}{1-\|g_{-z}(a)\|^2} = \|B(a,a)^{-1/2} B(a,z) B(z,z)^{-1/2}\| \qquad (a, z\in D)
\end{equation}
which has been proved in \cite{pauline}
(see also \cite[Lemma 3.2.17]{book}).

The aforementioned sequence $(z_k)$ in Wolff's theorem has a limit point $\xi$ by relative compactness
of $\mathbb{D}$. An infinite dimensional domain $D$ need not be relatively compact and the existence
of limit points is not guaranteed. For this reason,
we consider {\it compact} maps $f : D \rightarrow D \subset V$, which are the ones having relatively compact
image $f(D)$, that is, the closure $\overline{f(D)}$ is compact in $V$. All continuous self-maps on a finite dimensional bounded domain are necessarily compact.

Now let $f$ be a compact fixed-point free
holomorphic self-map on $D$.
Choose an increasing sequence $(\alpha_k)$ in $(0,1)$ with
limit $1$. Then $\alpha_k f$ maps $D$ strictly inside itself and
by the fixed-point theorem of Earle and Hamilton \cite{eh}, we have
$\alpha_kf(z_k) = z_k$ for some $z_k \in D$. Note that $z_k \neq
0$. Since $f(D)$ is relatively compact, we may assume, by choosing
a subsequence if necessary, that $(z_k)$ converges to a point $\xi
\in \overline D$. Since $f$ has no fixed point in $D$, the point
$\xi$ must lie in the boundary $\partial D$.

Generalizing Wolff's one-dimensional horodisc, we now define a {\it horoball at} $\xi$ as (the interior of)
a limit of
Kobayashi balls as follows. Alternative descriptions of horospheres in various domains have been given in \cite{a,ba,n}.

Given $\lambda >0$, pick a sequence $(r_k)$ in $(0,1)$ such that
$$ 1- r_k^2= \lambda (1-\|z_k\|^2) $$
from some $k$ onwards. For each $r_k$, define a Kobayashi ball, centred at $z_k$, by
\begin{equation}\label{kball}
D_k(\lambda) =\{z\in D: \kappa (z,z_k) < \tanh^{-1} r_k\}= \{x\in D: \|g_{-z_k}(x)\| < r_k\}
=  g_{z_k}(D(0, r_k))
\end{equation}
where $D(0,r_k)= \{z\in V: \|z\| < r_k\}$. The Kobayashi balls generalise the one-dimensional Poincar\'e discs.

For $x\in \overline{D}$ and $
0<r<1$, using (\ref{kball}) and
 the following formula
\begin{equation}\label{y}
g_z(rx) = (1-r^2)B(rz,rz)^{-1/2}(z) + r
B(z,z)^{1/2}B(rz,rz)^{-1/2}g_{rz}(x),
\end{equation}
as in \cite[Proposition 2.3]{pauline}, one can write
\begin{equation}\label{kball1}
D_k(\lambda) = (1-r_k^2)B(r_k z_k, r_k z_k)^{-1/2} (z_k) + r_k B(z_k.z_k)^{1/2}B(r_k z_k, r_k z_k)^{-1/2}
(D).
\end{equation}
In particular, $D_k(\lambda)$ is a convex domain.

Now we define the {\it limit} of the sequence $(D_k(\lambda))_k$ of Kobayashi balls as the following set:
\begin{equation}\label{hball}
S(\xi, \lambda) = \{ x\in \overline D : x= \lim_k x_k,\, x_k \in D_k(\lambda)\}
\end{equation}
which contains $\xi$ since $z_k \in D_k(\lambda)$, and is also convex because
each $D_k(\lambda)$ is. We call $S(\xi, \lambda)$ a {\it closed horoball ball} at $\xi$, its interior
 $S_0(\xi,\lambda)$  a {\it horoball} at $\xi$ and will
show that, for finite rank $D$, the latter resembles Wolff's  horodisc
and indeed, $S_0(\xi, \lambda) = H(\xi, \lambda)$ in (\ref{horo1}) for $D=\mathbb{D}$. Since $D$ is the interior of
$\overline D$, we have $S_0(\xi, \lambda) \subset D$.

We first construct invariant domains
using the observation in (\ref{horo2}).

\begin{lem}\label{f} The function $F:D\rightarrow [0, \infty)$ given by
\[
F(x)=\limsup_{k\rightarrow\infty}\frac{1-\|z_{k}\|^{2}}{1-\|g_{-z_{k}}(x)\|^{2}}\qquad(x\in D)
\]
is well-defined and continuous. \end{lem}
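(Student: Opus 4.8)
The plan is to show the two assertions separately: first that $F$ is well-defined (the $\limsup$ is finite), then that it is continuous on $D$. Both will follow from the norm estimate (\ref{gab}), namely
\[
\frac{1}{1-\|g_{-z}(a)\|^{2}} = \|B(a,a)^{-1/2} B(a,z) B(z,z)^{-1/2}\|,
\]
which rewrites $F$ in terms of operator norms. Setting $a=x$ and $z=z_k$, I would observe that
\[
\frac{1-\|z_k\|^2}{1-\|g_{-z_k}(x)\|^2} = (1-\|z_k\|^2)\,\|B(x,x)^{-1/2} B(x,z_k) B(z_k,z_k)^{-1/2}\|,
\]
and then use the identity (\ref{baa}), $\|B(z_k,z_k)^{-1/2}\| = 1/(1-\|z_k\|^2)$, to see that the troublesome factor $1-\|z_k\|^2$ is exactly cancelled by a corresponding blow-up. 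More precisely, the quantity is bounded above by $(1-\|z_k\|^2)\,\|B(x,x)^{-1/2}\|\,\|B(x,z_k)\|\,\|B(z_k,z_k)^{-1/2}\| = \|B(x,x)^{-1/2}\|\,\|B(x,z_k)\|$, and since $\|B(x,z_k)\| \le (1+\|x\|\|z_k\|)^2 \le (1+\|x\|)^2$ by the explicit estimate in the excerpt, we get
\[
0 \le \frac{1-\|z_k\|^2}{1-\|g_{-z_k}(x)\|^2} \le \frac{(1+\|x\|)^2}{1-\|x\|^2} = \frac{1+\|x\|}{1-\|x\|},
\]
uniformly in $k$. Hence the $\limsup$ defining $F(x)$ is finite for every $x \in D$, so $F$ is well-defined, and moreover $F$ is locally bounded.

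For continuity, I would exploit the fact that $F$ is a $\limsup$ of a family of continuous functions that is \emph{locally equicontinuous}. Fix $x_0 \in D$ and a closed ball $\overline{D(0,\rho)} \subset D$ containing a neighbourhood of $x_0$. On this ball the maps $x \mapsto g_{-z_k}(x)$ are holomorphic self-maps of $D$, hence (by the Schwarz–Pick inequality (\ref{contrac}) applied with $f = g_{-z_k}$, or directly by the Cauchy estimates on $D(0,\rho')$ for a slightly larger $\rho' < 1$) they are Lipschitz on $\overline{D(0,\rho)}$ with a Lipschitz constant depending only on $\rho$ and $\rho'$, not on $k$. Likewise $\|g_{-z_k}(x)\| \le \|g_{-z_k}(0)\| + (\text{Lipschitz part}) \le \|z_k\| + c\rho$, which can be kept bounded away from $1$ only if $\|z_k\|$ stays away from $1$ — but $\|z_k\| \to 1$, so I cannot control $1 - \|g_{-z_k}(x)\|^2$ from below uniformly. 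This is the crux of the matter.

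The way around this is to go back to the operator-norm expression and estimate differences there. Write $\varphi_k(x) = (1-\|z_k\|^2)\,\|B(x,x)^{-1/2} B(x,z_k) B(z_k,z_k)^{-1/2}\|$, so $F(x) = \limsup_k \varphi_k(x)$. I would show the family $\{\varphi_k\}$ is equicontinuous on each $\overline{D(0,\rho)}$ by using the reverse triangle inequality for operator norms:
\[
|\varphi_k(x) - \varphi_k(y)| \le (1-\|z_k\|^2)\,\big\| B(x,x)^{-1/2} B(x,z_k) B(z_k,z_k)^{-1/2} - B(y,y)^{-1/2} B(y,z_k) B(z_k,z_k)^{-1/2}\big\|,
\]
then split the difference $AXC - BYC = (A-B)XC + B(X-Y)C$ with $C = B(z_k,z_k)^{-1/2}$, $A = B(x,x)^{-1/2}$, $X = B(x,z_k)$, etc. Each resulting term has a factor $\|C\| = 1/(1-\|z_k\|^2)$ that cancels the prefactor $1-\|z_k\|^2$, leaving
\[
|\varphi_k(x) - \varphi_k(y)| \le \|B(x,x)^{-1/2} - B(y,y)^{-1/2}\|\,\|B(x,z_k)\| + \|B(y,y)^{-1/2}\|\,\|B(x,z_k) - B(y,z_k)\|.
\]
On $\overline{D(0,\rho)}$ the maps $x \mapsto B(x,x)^{-1/2}$ and $x \mapsto B(x,z_k)$ are operator-valued real-analytic/holomorphic with derivatives bounded uniformly in $k$ (since $\|z_k\| \le 1$ and the formula for $B(\cdot,z_k)$ is a fixed polynomial in $x$ with coefficients bounded by powers of $\|z_k\| \le 1$), and $\|B(x,z_k)\| \le (1+\rho)^2$, $\|B(y,y)^{-1/2}\| \le 1/(1-\rho^2)$ are uniformly bounded. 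Therefore $|\varphi_k(x) - \varphi_k(y)| \le L_\rho \|x - y\|$ with $L_\rho$ independent of $k$, i.e. $\{\varphi_k\}$ is uniformly Lipschitz on $\overline{D(0,\rho)}$. A uniformly equicontinuous family has an equicontinuous — hence continuous — $\limsup$: from $|\varphi_k(x) - \varphi_k(y)| \le L_\rho\|x-y\|$ one gets $|F(x) - F(y)| \le L_\rho \|x - y\|$ for $x, y \in \overline{D(0,\rho)}$, so $F$ is (locally Lipschitz, in particular) continuous on $D$. The main obstacle, as indicated, is precisely that the naive pointwise estimates blow up because $\|z_k\| \to 1$; the resolution is to never isolate $1/(1-\|g_{-z_k}(x)\|^2)$ on its own but always keep it paired with the $B(z_k,z_k)^{-1/2}$ factor whose norm it equals, so the dangerous quantities cancel before any limit is taken.
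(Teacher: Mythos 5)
Your proposal is correct and follows essentially the same route as the paper: both rewrite the quotient via the identity (\ref{gab}), cancel the factor $1-\|z_k\|^2$ against $\|B(z_k,z_k)^{-1/2}\|$ before estimating, and reduce continuity to the continuity of $x\mapsto B(x,x)^{-1/2}B(x,\cdot)$. The only (immaterial) difference is that the paper passes to the limit $z_k\to\xi$ to reduce to the single continuous function $h(x)=\|B(x,x)^{-1/2}B(x,\xi)\|$, whereas you keep a Lipschitz bound uniform in $k$; both yield the same conclusion.
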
 \begin{proof} For each
$x\in D$, we have
\begin{eqnarray*}
\frac{1-\|z_{k}\|^{2}}{1-\|g_{-z_{k}}(x)\|^{2}} & = & \|B(x,x)^{-1/2}B(x,z_{k})B(z_{k},z_{k})^{-1/2}\|(1-\|z_{k}\|^{2})\\
 & \leq & \|B(x,x)^{-1/2}B(x,z_{k})\|\|B(z_{k},z_{k})^{-1/2}\|(1-\|z_{k}\|^{2})\\
 & = & \|B(x,x)^{-1/2}B(x,z_{k})\|\leq\frac{(1+\|x\|\|z_{k}\|)^{2}}{1-\|x\|^{2}}\leq\frac{1+\|x\|}{1-\|x\|}
\end{eqnarray*}
and hence the defining sequence for $F(x)$ is bounded. Therefore
$F$ is well-defined.

For continuity, let $x,y\in D$ and write ${\displaystyle x_{k}=\frac{1-\|z_{k}\|^{2}}{1-\|g_{-z_{k}}(x)\|^{2}}}$,
also ${\displaystyle y_{k}=\frac{1-\|z_{k}\|^{2}}{1-\|g_{-z_{k}}(y)\|^{2}}}$.
Then we have
\[
|x_{k}-y_{k}|\leq\|B(x,x)^{-1/2}B(x,z_{k})-B(y,y)^{-1/2}B(y,z_{k})\|
\]
which gives
\begin{eqnarray*}
|F(x)-F(y)| & = & |\limsup_{k}x_{k}-\limsup_{k}y_{k}|\\
 & \leq & \limsup_{k}\|B(x,x)^{-1/2}B(x,z_{k})-B(y,y)^{-1/2}B(y,z_{k})\|\\
 & = & \|B(x,x)^{-1/2}B(x,\xi)-B(y,y)^{-1/2}B(y,\xi)\|
\end{eqnarray*}
since $(z_{k})$ norm converges to $\xi$. Now continuity of $F$
follows from that of the function\\
 $h(x)=\|B(x,x)^{-1/2}B(x,\xi)\|$ on $D$.

\end{proof}

\begin{rem}\label{nonempty}
We will show in Corollary \ref{h} that $F^{-1}[0,r) \neq \emptyset$ for each $r >0$.
\end{rem}

A similar computation as in the previous proof yields the following
result.

\begin{lem}\label{k} Let $(x_{k})$ be a sequence in $D$ norm converging
to $x\in D$. Then we have
\[
\limsup_{k\rightarrow\infty}\frac{1-\|z_{k}\|^{2}}{1-\|g_{-z_{k}}(x_{k})\|^{2}}=\limsup_{k\rightarrow\infty}\frac{1-\|z_{k}\|^{2}}{1-\|g_{-z_{k}}(x)\|^{2}}.
\]
\end{lem}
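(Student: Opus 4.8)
The plan is to mimic the computation in the proof of Lemma~\ref{f}, but now tracking a sequence $(x_k)$ instead of a fixed point $x$. Writing
\[
x_k' = \frac{1-\|z_k\|^2}{1-\|g_{-z_k}(x_k)\|^2}, \qquad x_k = \frac{1-\|z_k\|^2}{1-\|g_{-z_k}(x)\|^2}
\]
(reusing the same kind of notation as before), the estimate (\ref{gab}) gives
\[
x_k' = \|B(x_k,x_k)^{-1/2}B(x_k,z_k)\|, \qquad x_k = \|B(x,x)^{-1/2}B(x,z_k)\|,
\]
so that
\[
|x_k' - x_k| \leq \|B(x_k,x_k)^{-1/2}B(x_k,z_k) - B(x,x)^{-1/2}B(x,z_k)\|.
\]
First I would bound the right-hand side uniformly in $k$. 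The point is that $z_k \to \xi \in \partial D$, so $(z_k)$ lies in a norm-bounded set, and the map $(u,v) \mapsto B(u,u)^{-1/2}B(u,v)$ is continuous, and in fact locally uniformly continuous in $u$ with $v$ ranging over a bounded set, because $x \in D$ is fixed and $x_k \to x$ means eventually all $x_k$ lie in a ball $\{\|w\|\le \rho\} \subset D$ on which $B(w,w)^{-1/2}$ has norm bounded by $1/(1-\rho^2)$ via (\ref{baa}).

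The key step is therefore an equicontinuity-type argument: show that
\[
\sup_k \|B(x_k,x_k)^{-1/2}B(x_k,z_k) - B(x,x)^{-1/2}B(x,z_k)\| \longrightarrow 0 \quad \text{as } x_k \to x.
\]
To do this I would split the difference,
\[
B(x_k,x_k)^{-1/2}B(x_k,z_k) - B(x,x)^{-1/2}B(x,z_k)
= \bigl(B(x_k,x_k)^{-1/2} - B(x,x)^{-1/2}\bigr)B(x_k,z_k) + B(x,x)^{-1/2}\bigl(B(x_k,z_k) - B(x,z_k)\bigr),
\]
and estimate each term. For the second term, $B(x,x)^{-1/2}$ has fixed finite norm, $B(\cdot, z_k)$ is (affine-)polynomial in the first slot with coefficients controlled by $\|z_k\| \le 1$, so $\|B(x_k,z_k) - B(x,z_k)\| \le C\|x_k - x\|$ with $C$ independent of $k$ (using the explicit formula $B(a,b)(v) = v - 2\{a,b,v\} + \{a,\{b,v,b\},a\}$ and $\|\{\cdot,\cdot,\cdot\}\|\le\|\cdot\|\|\cdot\|\|\cdot\|$). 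For the first term, $\|B(x_k,z_k)\| \le (1+\|x_k\|)^2 \le (1+\rho)^2$ is bounded, and $B(x_k,x_k)^{-1/2} \to B(x,x)^{-1/2}$ in norm since $x \mapsto B(x,x)$ is continuous, invertible near $x$, and the square-root map is norm-continuous on the set of operators with spectrum in a fixed compact subset of $(0,\infty)$. Hence both terms tend to $0$ uniformly in $k$, so $|x_k' - x_k| \to 0$, and taking $\limsup_k$ on both sides yields $\limsup_k x_k' = \limsup_k x_k$, which is exactly the claim since by Lemma~\ref{f} the right-hand limsup is $F(x)$.

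The main obstacle is the uniformity over $k$ in the convergence $B(x_k,x_k)^{-1/2} \to B(x,x)^{-1/2}$: one must be slightly careful that the spectra of the operators $B(x_k,x_k)$ stay in a fixed interval $[\delta, M] \subset (0,\infty)$ so that a single modulus of continuity for the square-root (or inverse-square-root) functional calculus applies to all of them simultaneously. This follows because $x_k$ eventually lies in the ball of radius $\rho<1$, on which $\|B(x,x)^{-1/2}\| \le 1/(1-\rho^2)$ and $\|B(x,x)\| \le (1+\rho)^2$ uniformly; combined with the non-negativity of the spectrum of $B(a,a)$ for $a \in D$, this pins the spectra into $[(1-\rho^2)^2,(1+\rho)^2]$ roughly, and the functional calculus argument goes through. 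Everything else is the same bookkeeping with the triple product and Bergmann operator as in Lemma~\ref{f}.
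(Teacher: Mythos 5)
Your proposal follows exactly the route the paper intends: the paper gives no separate argument for this lemma, stating only that ``a similar computation as in the previous proof yields the result,'' i.e.\ one bounds the difference of the two ratios by $\|B(x_k,x_k)^{-1/2}B(x_k,z_k)-B(x,x)^{-1/2}B(x,z_k)\|$ and lets this tend to $0$ using $x_k\to x$ and $z_k\to\xi$ in norm; your equicontinuity/functional-calculus discussion just makes explicit the continuity that the paper takes for granted in Lemma \ref{f}. One slip, though: your displayed identity
\[
\frac{1-\|z_k\|^2}{1-\|g_{-z_k}(x_k)\|^2}=\|B(x_k,x_k)^{-1/2}B(x_k,z_k)\|
\]
is false in general. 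From (\ref{gab}) the left side equals $(1-\|z_k\|^2)\,\|B(x_k,x_k)^{-1/2}B(x_k,z_k)B(z_k,z_k)^{-1/2}\|$, and since the norm is only submultiplicative you get $\le$, not $=$ (this is exactly how it is written in the proof of Lemma \ref{f}). The key inequality you derive from it is nevertheless true: writing $A_k'=B(x_k,x_k)^{-1/2}B(x_k,z_k)$, $A_k=B(x,x)^{-1/2}B(x,z_k)$ and $C_k=B(z_k,z_k)^{-1/2}$, the reverse triangle inequality gives
\[
(1-\|z_k\|^2)\bigl|\,\|A_k'C_k\|-\|A_kC_k\|\,\bigr|\le(1-\|z_k\|^2)\,\|A_k'-A_k\|\,\|C_k\|=\|A_k'-A_k\|,
\]
using $\|C_k\|=1/(1-\|z_k\|^2)$. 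With that one-line repair the rest of your argument goes through and matches the paper's.
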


The following result is a first generalisation of Wolff's theorem to all bounded symmetric domains.

\begin{thm} \label{g} Let $f$ be a fixed-point free compact holomorphic self-map on a bounded symmetric domain
$D$.
Then there is a sequence $(z_k)$ in $D$ converging
to a boundary point $\xi \in \overline D$ such that, for each $\lambda >0$, the set
$$H(\xi, \lambda)=\left\{x\in D: \limsup_{k\rightarrow \infty} \frac{1-\|z_k\|^2}{1-\|g_{-z_k}(x)\|^2}
 < \frac{1}{\lambda}\right\}$$
is a non-empty convex domain and $f$-invariant, that is, $f(H(\xi,\lambda)) \subset H(\xi, \lambda)$. Moreover,
$D= \bigcup_{\lambda >0} H(\xi, \lambda)$ and $0 \in \bigcap_{\lambda <1} H(\xi, \lambda)$.
\end{thm}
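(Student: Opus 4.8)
The plan is to establish the four assertions about $H(\xi,\lambda)$ in turn, using the function $F$ from Lemma~\ref{f}, so that $H(\xi,\lambda) = F^{-1}[0,1/\lambda)$. \emph{Non-emptiness and convexity.} Since $F$ is continuous by Lemma~\ref{f}, the set $H(\xi,\lambda) = \{x\in D: F(x) < 1/\lambda\}$ is open; that it is a \emph{domain} (connected) and non-empty I would deduce from its identification with the horoball interior $S_0(\xi,\lambda)$, or more directly from the formula (\ref{kball1}): each Kobayashi ball $D_k(\lambda)$ is convex, and one checks that $F(x) < 1/\lambda$ holds precisely when $x$ lies (eventually) inside the $D_k(\lambda)$, using the norm identity (\ref{gab}) to rewrite $\frac{1-\|z_k\|^2}{1-\|g_{-z_k}(x)\|^2}$ as $\|B(x,x)^{-1/2}B(x,z_k)B(z_k,z_k)^{-1/2}\|(1-\|z_k\|^2)$ and comparing with the radius condition $1-r_k^2 = \lambda(1-\|z_k\|^2)$. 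Convexity of $H(\xi,\lambda)$ then follows from convexity of each $D_k(\lambda)$ together with the $\limsup$ description; alternatively one notes $H(\xi,\lambda) = S_0(\xi,\lambda)$ and invokes convexity of $S(\xi,\lambda)$ already established in the text. Non-emptiness is exactly the content promised in Remark~\ref{nonempty}/Corollary~\ref{h}, which I would cite.

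\emph{$f$-invariance.} This is where the choice of the sequence $(z_k)$ with $\alpha_k f(z_k) = z_k$ enters decisively, and I expect it to be the main obstacle. Fix $x \in H(\xi,\lambda)$ and set $y = f(x)$; I must bound $F(y)$. The idea is to compare $\|g_{-z_k}(f(x))\|$ with $\|g_{-z_k}(x)\|$. Because $z_k$ is a fixed point of $\alpha_k f$, we have $f(z_k) = z_k/\alpha_k$, so the Schwarz–Pick inequality (\ref{contrac}) applied to $f$ gives $\|g_{-f(z_k)}(f(x))\| \le \|g_{-z_k}(x)\|$, i.e.\ $\|g_{-z_k/\alpha_k}(f(x))\| \le \|g_{-z_k}(x)\|$. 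The work is then to pass from $g_{-z_k/\alpha_k}$ back to $g_{-z_k}$, controlling the discrepancy as $\alpha_k \to 1$: one writes $g_{-z_k}(f(x)) = (g_{-z_k}\circ g_{z_k/\alpha_k})\big(g_{-z_k/\alpha_k}(f(x))\big)$ and estimates the norm distortion of the composed Möbius map $g_{-z_k}\circ g_{z_k/\alpha_k}$ near the relevant points using the explicit Jordan formula for $g_a$ and the Bergmann-operator estimates (\ref{baa}), (\ref{gab}), (\ref{y}). The upshot should be an inequality of the form $\frac{1-\|z_k\|^2}{1-\|g_{-z_k}(f(x))\|^2} \le \frac{1-\|z_k\|^2}{1-\|g_{-z_k}(x)\|^2}(1 + \varepsilon_k)$ with $\varepsilon_k \to 0$, whence $F(f(x)) \le F(x) < 1/\lambda$, i.e.\ $f(x) \in H(\xi,\lambda)$. (A cleaner route, if available from the cited papers, is to use that $\frac{1-\|z_k\|^2}{\alpha_k^2} - \|z_k\|^2 \big/ \ldots$ already absorbs the $\alpha_k$ correction; in any case the extra factor must be shown to vanish in the limit.)

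\emph{The covering $D = \bigcup_{\lambda>0} H(\xi,\lambda)$.} For any $x \in D$, the chain of inequalities in the proof of Lemma~\ref{f} shows $F(x) \le \frac{1+\|x\|}{1-\|x\|} < \infty$; choosing $\lambda < 1/F(x)$ (or any $\lambda$ with $1/\lambda > F(x)$) puts $x \in H(\xi,\lambda)$. Since $\lambda$ ranges over all positive reals this gives the union. \emph{The inclusion $0 \in \bigcap_{\lambda<1} H(\xi,\lambda)$.} Evaluate $F$ at $0$: since $g_{-z_k}(0) = -z_k$ (a standard property of the Möbius transformations, as $g_{-z_k}$ inverts $g_{z_k}$ and $g_{z_k}(0) = z_k$... more precisely $g_{-z_k}(0)$ has norm $\|z_k\|$), we get $\frac{1-\|z_k\|^2}{1-\|g_{-z_k}(0)\|^2} = \frac{1-\|z_k\|^2}{1-\|z_k\|^2} = 1$, so $F(0) = \limsup_k 1 = 1$. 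Hence $F(0) = 1 < 1/\lambda$ exactly when $\lambda < 1$, giving $0 \in H(\xi,\lambda)$ for all $\lambda < 1$ and $0 \notin H(\xi,\lambda)$ for $\lambda \ge 1$. The only subtlety here is confirming $\|g_{-z_k}(0)\| = \|z_k\|$ from the Jordan formula for $g_a$ (at $z=0$, $g_a(0) = a$, and applying this to $a = -z_k$ gives $g_{-z_k}(0) = -z_k$), which is immediate. Assembling these four parts completes the proof.
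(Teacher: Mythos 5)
Your overall architecture (openness from Lemma \ref{f}, convexity via the Kobayashi balls, non-emptiness deferred to Corollary \ref{h}, $F(y)<\infty$ for the covering, $F(0)=1$ for the last claim) matches the paper, and those parts are essentially correct --- modulo one small point in the convexity step: if $x,y$ are eventually in $D_k(\lambda)$ you only get $F(\alpha x+(1-\alpha)y)\le 1/\lambda$, not the strict inequality defining $H(\xi,\lambda)$; the paper fixes this by interposing a $\beta>\lambda$ with $F(x),F(y)<1/\beta<1/\lambda$ and working with the balls $D_k(\beta)$. Also, deducing non-emptiness from the identification $H(\xi,\lambda)=S_0(\xi,\lambda)$ would be circular here, since that identification is only established later (Theorem \ref{inv}) and only for finite rank; your alternative citation of Corollary \ref{h} is the right one.

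The genuine gap is in the $f$-invariance step, exactly where you predicted the main obstacle. Your starting inequality $\|g_{-z_k/\alpha_k}(f(x))\|\le\|g_{-z_k}(x)\|$ is correct, but the proposed passage from $g_{-z_k/\alpha_k}$ back to $g_{-z_k}$ with an error factor $1+\varepsilon_k\to 0$ is not carried out, and it is not a routine continuity estimate: although $\|z_k/\alpha_k-z_k\|\to 0$, the Kobayashi distance $\kappa(z_k,z_k/\alpha_k)$ need \emph{not} tend to $0$ (in the disc it equals $\tanh^{-1}\bigl|z_k(1-\alpha_k)/(\alpha_k-|z_k|^2)\bigr|$, which stays bounded away from $0$ in general), and correspondingly the operators $(1-\|z_k\|^2)B(z_k,z_k)^{-1/2}$ and $(1-\|z_k/\alpha_k\|^2)B(z_k/\alpha_k,z_k/\alpha_k)^{-1/2}$ do not become close in norm, since their spectral coefficients differ by factors $\bigl(\alpha_k^2(1-\beta_{ik}^2)/(\alpha_k^2-\beta_{ik}^2)\bigr)^{1/2}$ which blow up when a spectral value $\beta_{ik}$ approaches $\alpha_k$. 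So the distortion of $g_{-z_k}\circ g_{z_k/\alpha_k}$ cannot be dismissed by ``$\alpha_k\to1$'' alone; one would have to exploit that it is evaluated at the specific points $g_{-z_k/\alpha_k}(f(x))$, which is real work you have not done. The paper avoids all of this with one device: apply the Schwarz--Pick inequality \eqref{contrac} to the self-map $f_k=\alpha_k f$ itself, for which $z_k$ is an \emph{exact} fixed point, giving $\|g_{-z_k}(f_k(x))\|=\|g_{-f_k(z_k)}(f_k(x))\|\le\|g_{-z_k}(x)\|$ with the same base point $z_k$ on both sides; the only limit then needed is in the argument, $f_k(x)=\alpha_k f(x)\to f(x)$ in norm, which is precisely what Lemma \ref{k} handles. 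You should replace your distortion estimate by this application of \eqref{contrac} to $f_k$ together with Lemma \ref{k}.
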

\begin{proof} By Lemma \ref{f}, the set $H(\xi, \lambda)$ is open and by Remark \ref{nonempty}, it is non-empty.
To see that $H(\xi, \lambda)$ is convex, let $x,y\in H(\xi, \lambda)$ and $0< \alpha <1$. We show
$\alpha x + (1-\alpha)y \in H(\xi, \lambda)$. There exists $k_0$ such that
$k \geq k_0$ implies
$$\frac{1-\|z_k\|^2}{1-\|g_{-z_k}(x)\|^2}, ~ \frac{1-\|z_k\|^2}{1-\|g_{-z_k}(y)\|^2}
\, < \,\frac{1}{\beta}  < \frac{1}{\lambda}$$ for some $\beta$ satisfying $\beta (1-\|z_k\|^2)= 1-s_k^2$
with $s_k \in (0,1)$. This gives
 $\|g_{-z_k}(x)\| <s_k$ and  $\|g_{-z_k}(y)\| < s_k$,
that is,
$x$ and $y$ belong to the Kobayashi ball  $D_k(\beta)$ as defined in (\ref{kball}). Since $D_k(\beta)$ is
convex, the element $w=\alpha x + (1-\alpha)y$ is in $D_k(\beta)$ for $k \geq k_0$. Therefore
$$\limsup_{k\rightarrow \infty}\frac{1-\|z_k\|^2}{1-\|g_{-z_k}(w)\|^2} \leq
\limsup_{k\rightarrow \infty}\frac{1-\|z_k\|^2}{1-s_k^2} = \frac{1}{\beta}
 < \frac{1}{\lambda}$$
 and $w \in H(\xi, \lambda)$.

For $f$-invariance, let
$x\in H(\xi, \lambda)$. We need to show $f(x) \in H(\xi, \lambda)$. Let $f_k = \alpha_k f$ be as before. Then by (\ref{contrac}), we have
$$\|g_{-z_k}(f_k(x))\| =\|g_{-f_k(z_k)}(f_k(x)) \leq \|g_{-z_k}(x)\|$$
and
$$\frac{1-\|z_k\|^2}{1-\|g_{-z_k}(f_k(x))\|^2} \leq \frac{1-\|z_k\|^2}{1-\|g_{-z_k}(x)\|^2}.$$
Hence Lemma \ref{k} implies
$$ \limsup_{k\rightarrow \infty} \frac{1-\|z_k\|^2}{1-\|g_{-z_k}(f(x))\|^2}
= \limsup_{k\rightarrow \infty}  \frac{1-\|z_k\|^2}{1-\|g_{-z_k}(f_k(x))\|^2}
\leq \limsup_{k\rightarrow \infty}  \frac{1-\|z_k\|^2}{1-\|g_{-z_k}(x)\|^2} < \frac{1}{\lambda}$$
which gives $f(x) \in H(\xi, \lambda)$.

Finally, for each $y\in D$, we have $y \in H(\xi, \lambda)$ whenever $F(y) < 1/\lambda$. Since $F(0)=1$, we
have $0\in H(\xi, \lambda)$ for $\lambda <1$. This proves the last assertion.
\end{proof}

We will show that the domain $H(\xi, \lambda)$ resembles Wolff's horodisc for finite rank $D$, which
is a second generalisation of Wolff's theorem. This is achieved by showing that $H(\xi, \lambda)$ is identical with
the horoball $S_0(\xi, \lambda)$ and by giving an explicit description of $S_0(\xi, \lambda)$. We first point out
that the result in \cite[Lemma 4.2]{lie} (see also \cite[Theorem 4.6]{lie}) for Lie balls is valid for all bounded symmetric domains, which shows
the invariance of the larger convex set $S(\xi, \lambda) \cap D$. This is stated below.

\begin{thm} Let $f$ be a fixed-point free compact holomorphic self-map on a bounded symmetric domain.
 Then there is a sequence $(z_k)$ in $D$ converging
to a boundary point $\xi \in \overline D$ such that, for each $\lambda >0$, we have $f(S(\xi, \lambda) \cap D) \subset S(\xi, \lambda) \cap D$.
\end{thm}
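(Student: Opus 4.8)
The plan is to recycle the machinery already built for Theorem~\ref{g}, replacing the open convex sets $H(\xi,\lambda)$ by the closed convex sets $S(\xi,\lambda)\cap D$, and to exploit the fact that $S(\xi,\lambda)$ is the limit, in the sense of \eqref{hball}, of the Kobayashi balls $D_k(\lambda)$. So I take the same sequence $(z_k)$ and the same boundary point $\xi$ produced before, together with the auxiliary maps $f_k=\alpha_k f$ satisfying $f_k(z_k)=z_k$.

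First I would fix $\lambda>0$ and a point $x\in S(\xi,\lambda)\cap D$, and write $x=\lim_k x_k$ with $x_k\in D_k(\lambda)$; this is possible by the very definition \eqref{hball}. Since $D_k(\lambda)$ is the Kobayashi ball $\{z\in D:\|g_{-z_k}(z)\|<r_k\}$ of \eqref{kball}, we have $\kappa(x_k,z_k)<\tanh^{-1}r_k$. Now apply the contraction property \eqref{contrac} of the $\kappa$-nonexpansive map $f_k$ together with $f_k(z_k)=z_k$: this gives
\[
\kappa\bigl(f_k(x_k),z_k\bigr)=\kappa\bigl(f_k(x_k),f_k(z_k)\bigr)\le\kappa(x_k,z_k)<\tanh^{-1}r_k,
\]
so $f_k(x_k)\in D_k(\lambda)$ for every $k$ (from the $k$ onwards where $r_k$ is defined). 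Hence $\lim_k f_k(x_k)$, if it exists, lies in $S(\xi,\lambda)$ by \eqref{hball}.

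The remaining point is to identify this limit with $f(x)$. Here I would use that $f$ is compact, so $\overline{f(D)}$ is norm-compact; passing to a subsequence, $f(x_k)$ converges to some point, and since $x_k\to x$ and $\alpha_k\to 1$ one checks $f_k(x_k)=\alpha_k f(x_k)\to f(x)$ by continuity of $f$ (the same elementary estimate used implicitly around Lemma~\ref{k}: $\|f_k(x_k)-f(x)\|\le(1-\alpha_k)\|f(x_k)\|+\|f(x_k)-f(x)\|$, and $\|f(x_k)\|$ is bounded). Therefore $f(x)\in S(\xi,\lambda)$; and $f(x)\in D$ because $f$ maps $D$ into $D$, so $f(x)\in S(\xi,\lambda)\cap D$, which is exactly $f$-invariance.

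I expect the only delicate step to be the passage to the limit in the last paragraph: one must make sure the approximating sequence $(x_k)$ can be chosen so that $f_k(x_k)$ genuinely converges to $f(x)$ rather than merely having a convergent subsequence in $\overline D$, and that the limit stays inside $D$ (it does, since $f(D)\subset D$, but a priori limits of points of $D$ can land on $\partial D$ — here this is harmless because we already know $f(x)\in D$). Everything else is a direct transcription of the argument for Theorem~\ref{g} with ``$<r_k$'' relaxed to ``$\le r_k$'' in the limit. Convexity of $S(\xi,\lambda)\cap D$ has already been recorded after \eqref{hball}, and the existence of $(z_k)$ and $\xi$ is exactly as in Theorem~\ref{g}, so no new input is needed there.
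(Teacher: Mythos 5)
Your argument is correct and is essentially the proof the paper has in mind: the paper itself gives no details here, deferring to \cite[Lemma 4.2]{lie}, and the standard argument there is exactly yours --- approximate $x$ by $x_k\in D_k(\lambda)$, use $f_k(z_k)=z_k$ together with the $\kappa$-nonexpansiveness \eqref{contrac} to keep $f_k(x_k)$ in $D_k(\lambda)$, and pass to the limit via \eqref{hball}. The "delicate step" you flag is in fact harmless: since $x_k\to x$ in norm with $x\in D$ and $f$ is continuous at $x$ with $\|f(x_k)\|\le 1$, one gets $f_k(x_k)=\alpha_kf(x_k)\to f(x)$ outright, with no need for compactness of $f$ or a subsequence at that point.
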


The following lemma shows the inclusion $H(\xi, \lambda) \subset S(\xi, \lambda) \cap D$.

\begin{lem}\label{subset} $H(\xi, \lambda) \subset S_0(\xi,\lambda)$ and $S(\xi, \lambda)\cap D
 \subset \{x \in D: F(x) \leq 1/\lambda\}$.
\end{lem}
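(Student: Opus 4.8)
The plan is to prove the two inclusions separately, each by "passing to the limit" in the defining inequalities, and in each case the crucial device is Lemma \ref{k}, which lets us replace a sequence converging to $x$ by the constant sequence $x$ inside the $\limsup$.

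For the first inclusion $H(\xi,\lambda)\subset S_0(\xi,\lambda)$, I would start with $x\in H(\xi,\lambda)$, so that $F(x)<1/\lambda$. Exactly as in the convexity argument of Theorem \ref{g}, I can choose $\beta>\lambda$ with $F(x)<1/\beta$, so that for all large $k$ one has $\|g_{-z_k}(x)\|<s_k$ where $\beta(1-\|z_k\|^2)=1-s_k^2$; thus $x\in D_k(\beta)$ for $k\geq k_0$. Since $\beta>\lambda$, a comparison of the radii $r_k$ (defined by $1-r_k^2=\lambda(1-\|z_k\|^2)$) and $s_k$ shows $s_k<r_k$, hence $D_k(\beta)\subset D_k(\lambda)$, so $x\in D_k(\lambda)$ for all large $k$. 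Taking the constant sequence $x_k=x$ in the definition \eqref{hball} of $S(\xi,\lambda)$ gives $x\in S(\xi,\lambda)$; because $x\in D$ and $D$ is open, with $D_k(\lambda)$ open neighbourhoods, a small argument (perturbing $x$ slightly, or using that $D_k(\beta)$ with $\beta>\lambda$ is contained in the interior of the limit set) yields $x\in S_0(\xi,\lambda)$. I expect this last "interior" refinement to be the most delicate point: one must show that membership of $x$ in $D_k(\lambda)$ for all large $k$ with uniform slack $\beta>\lambda$ forces $x$ to lie in the interior of $S(\xi,\lambda)$, not merely in $S(\xi,\lambda)$ itself. The natural way is to observe that a whole norm-ball around $x$ still lies in $D_k(\beta)\subset D_k(\lambda)$ for large $k$, and each point of that ball is then a limit of points $x_k$ in $D_k(\lambda)$.

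For the second inclusion $S(\xi,\lambda)\cap D\subset\{x\in D:F(x)\leq 1/\lambda\}$, I would take $x\in S(\xi,\lambda)\cap D$ and write $x=\lim_k x_k$ with $x_k\in D_k(\lambda)$, i.e. $\|g_{-z_k}(x_k)\|<r_k$ where $1-r_k^2=\lambda(1-\|z_k\|^2)$. Then directly
\[
\frac{1-\|z_k\|^2}{1-\|g_{-z_k}(x_k)\|^2}\leq\frac{1-\|z_k\|^2}{1-r_k^2}=\frac{1}{\lambda},
\]
so $\limsup_k\frac{1-\|z_k\|^2}{1-\|g_{-z_k}(x_k)\|^2}\leq 1/\lambda$. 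Since $x\in D$ and $x_k\to x$ in norm, Lemma \ref{k} applies and the left-hand $\limsup$ equals $F(x)$; hence $F(x)\leq 1/\lambda$, as required. This direction is essentially immediate once Lemma \ref{k} is in hand.

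The only genuine obstacle, then, is confirming the strictness/interior claim in the first inclusion — that $H(\xi,\lambda)$, cut out by a \emph{strict} inequality $F(x)<1/\lambda$, lands inside the \emph{interior} $S_0(\xi,\lambda)$ rather than the closed horoball $S(\xi,\lambda)$. I would handle this by exploiting the strict inequality to gain room: choosing $\beta$ strictly between $\lambda$ and $1/F(x)$ gives, for large $k$, an open ball about $x$ contained in $D_k(\beta)$ and hence in $D_k(\lambda)$, and every point of that open ball is therefore of the form $\lim_k x_k$ with $x_k\in D_k(\lambda)$; this exhibits a neighbourhood of $x$ inside $S(\xi,\lambda)$, so $x\in S_0(\xi,\lambda)$. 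Everything else is the bounded-sequence and $\limsup$ bookkeeping already rehearsed in Lemmas \ref{f} and \ref{k}.
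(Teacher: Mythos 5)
Your second inclusion is exactly the paper's argument: bound the quotient by $1/\lambda$ using $\|g_{-z_k}(x_k)\|<r_k$ and then invoke Lemma \ref{k} to replace $x_k$ by $x$; nothing to add there. For the first inclusion, your proof that $H(\xi,\lambda)\subset S(\xi,\lambda)$ is also essentially the paper's (the detour through $\beta$ and $D_k(\beta)$ is harmless but unnecessary: $F(x)<1/\lambda$ already gives $\frac{1-\|z_k\|^2}{1-\|g_{-z_k}(x)\|^2}<\frac{1}{\lambda}=\frac{1-\|z_k\|^2}{1-r_k^2}$ for large $k$, hence $\|g_{-z_k}(x)\|<r_k$ and $x\in D_k(\lambda)$ directly). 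Where you diverge is the step you flag as ``the most delicate point,'' and there you make it harder than it is. The paper disposes of it in one line: $H(\xi,\lambda)$ is \emph{open} (by continuity of $F$, Lemma \ref{f}), and an open set contained in $S(\xi,\lambda)$ is automatically contained in its interior $S_0(\xi,\lambda)$. Your proposed mechanism --- that a fixed norm-ball about $x$ lies in $D_k(\beta)\subset D_k(\lambda)$ \emph{for all large $k$ uniformly} --- is not justified as stated: the gap $s_k-s'_k$ between the radii of nested Kobayashi balls shrinks to $0$ as $k\to\infty$, and the threshold index at which a given point $y$ enters $D_k(\lambda)$ may depend on $y$, so the uniform containment needs an estimate you do not supply. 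The claim you actually need is only pointwise: each $y$ in a small ball about $x$ satisfies $F(y)<1/\lambda$ (continuity of $F$), hence lies eventually in $D_k(\lambda)$ and so in $S(\xi,\lambda)$ --- which is just the openness of $H(\xi,\lambda)$ again. With that substitution your argument closes; as written, the uniformity assertion is the one genuine soft spot.
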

\begin{proof} Let $x\in H(\xi, \lambda)$. Then we have, from some $k$ onwards,
$$\frac{1-\|z_k\|^2}{1-\|g_{-z_k}(x)\|^2} < \frac{1}{\lambda} = \frac{1-\|z_k\|^2}{1-r_k^2}$$
which implies $\|g_{-z_k}(x)\| < r_k$, that is, $x \in D_k(\lambda)$. Hence $x\in S(\xi, \lambda)$.
We have shown $H(\xi, \lambda) \subset S(\xi, \lambda)$ and therefore $H(\xi, \lambda) \subset S_0(\xi, \lambda)$ since
$H(\xi, \lambda)$ is open.

For the second assertion, let $x \in S(\xi, \lambda)\cap D$ with $x = \lim_k x_k$ and $x_k \in D_k(\lambda)$. Since
$\|g_{-z_k}(x_k)\| < r_k$ for each $k$, we have from Lemma \ref{k} that
$$ \limsup_{k\rightarrow \infty} \frac{1-\|z_k\|^2}{1-\|g_{-z_k}(x)\|^2}
= \limsup_{k\rightarrow \infty} \frac{1-\|z_k\|^2}{1-\|g_{-z_k}(x_k)\|^2}
\leq  \limsup_{k\rightarrow \infty} \frac{1-\|z_k\|^2}{1-r_k^2} = \frac{1}{\lambda}.$$
This proves $S(\xi, \lambda) \cap D \subset \{x: F(x) \leq 1/\lambda\}$.
\end{proof}

\begin{exam}\label{function}\rm
Let $D$ be the open unit ball of the JB*-triple $C(\Omega)$  of complex continuous functions on a compact Hausdorff space $\Omega$, with
Jordan triple product $\{x,y,z\} = x\overline y z$ where $\overline y$ denotes the complex conjugate of the
function $y \in C(\Omega)$. For $a,b \in D$, the Bergmann operator $B(a,b)$ is given by a product of functions:
$$B(a,b)(z) = (\mathbf{1}-a\overline b)^2 z \qquad (z \in C(\Omega))$$ where $\mathbf{1}$ denotes the constant function
with value $1$ (cf.\,\cite[Example 3.2.12]{book}) and we have
$$\|B(a,b)\|= \|(\mathbf{1}-a\overline b)^2 \| = \sup\{ |{1}- a(\omega)\overline b(\omega)|^2: \omega \in \Omega\}.$$
Let $(z_k)$ be a sequence in $D$ converging to $\xi \in \partial D$ as before. Then
\begin{eqnarray*}
\|B(x,x)^{-1/2}B(x,z_{k})B(z_{k},z_{k})^{-1/2}\|({1}-\|z_k\|^2) = \left\|\frac{(\mathbf{1}-x\overline z_k)^2({1}-\|z_k\|^2)}{(\mathbf{1}-|x|^2)(\mathbf{1}-|z_k|^2)}\right\|
\end{eqnarray*}
Since $\left\|\frac{{1}-\|z_k\|^2}{\mathbf{1}-|z_k|^2}\right\| \leq 1$ and the sequence $(\mathbf{1}-x\overline z_k)$
converges to $\mathbf{1}-x\overline \xi$ in $C(\Omega)$, we have
$$\limsup_k \|B(x,x)^{-1/2}B(x,z_{k})B(z_{k},z_{k})^{-1/2}\|({1}-\|z_k\|^2)   =
\limsup_k \left\|\frac{(\mathbf{1}-x\overline \xi)^2}{\mathbf{1}-|x|^2}\frac{{1}-\|z_k\|^2}{\mathbf{1}-|z_k|^2}\right\|$$
and for $\lambda >0$,
$$H(\xi, \lambda)=\left\{x\in D: \limsup_k \left\|\frac{(\mathbf{1}-x\overline \xi)^2}{\mathbf{1}-|x|^2}\left(\frac{{1}-\|z_k\|^2}{\mathbf{1}-|z_k|^2}\right)\right\|
<\frac{1}{\lambda}\right\}$$
which reduces to the horodisc in (\ref{horo1}) if $\Omega$ is a singleton in which case the function
$\frac{1-\|z_k\|^2}{\mathbf{1}-|z_k|^2} = \mathbf{1}$.
\end{exam}

\section{Peirce decompositions}

To study the invariant domains $H(\xi, \lambda)$ and the horoballs $S_0(\xi, \lambda)$ in depth,
we need to develop more tools from the ambient Jordan structures. We begin with the Peirce decompositions
of JB*-triples.
An element $e$ in a JB*-triple $V$ is called a {\it tripotent} if $\{e,e,e\} =e$. For a nonzero tripotent $e$,
we have $\|e\|=1$. A nonzero
tripotent $e$ is called {\it minimal} if $\{e, V, e\} = \mathbb{C}\,e$. Two elements $a, b \in V$
are said to be mutually (triple) {\it orthogonal} if $a \bo b = b \bo a =0$, which is equivalent to
$a\bo b=0$ \cite[Lemma 1.2.32]{book}. In this case, we have $\|a+b\|= \max\{\|a\|, \|b\|\}$
\cite[Corollary 3.1.21]{book}. Evidently, if $e$ and $c$ are mutually orthogonal tripotents, then
$e+c$ is also a tripotent.

A tripotent $e\in V$ induces a {\it Peirce decomposition}
$$V= V_0(e) \oplus V_1(e) \oplus V_2(e)$$
where each $V_k(e)$, called the {\it Peirce k-space}, is an
eigenspace
$$V_k(e) = \left\{z\in V : (e\bo e)(z)=\frac{k}{2}\,z\right\} \qquad
(k=0,1,2)$$ of the operator $e\bo e$, and is the range of the
contractive projection $P_k(e): V \longrightarrow V$ given by
$$P_0(e)= B(e,e); \quad P_1(e)=4(e\bo e - (e\bo e)^2); \quad
P_2(e)= 2(e\bo e)^2 - e\bo e.$$ We call $P_k(e)$ the {\it Peirce
k-projection} and refer to \cite[p.\,32]{book} for more detail.

By \cite[Corollary 1.2.46]{book}, a tripotent $c$ is orthogonal to $e$ if and only if
$c \in V_0(e)$. A tripotent $e$ is called {\it maximal} if $V_0(e) =\{0\}$.

 Let $\{e_{1},\dots,e_{n}\}$ be a family  of mutually orthogonal tripotents in a JB*-triple $V$. For $i,j \in \{0,1, \ldots,n\}$, the {\it joint Peirce space} $V_{ij}$ is defined by
\begin{eqnarray*}
V_{ij}:=  V_{ij}(e_{1},\dots,e_{n})  =\{z\in V\medspace:\medspace2\{e_{k},e_{k},z\}=(\delta_{ik}+\delta_{jk})z\medspace\text{for }k=1,\dots,n\},
\end{eqnarray*}
where $\delta_{ij}$ is the Kronecker delta and $V_{ij}=V_{ji}$.

The decomposition
\begin{eqnarray*}
V & = & \bigoplus_{0\leq i\leq j\leq n}V_{ij}
\end{eqnarray*}
is called a {\it joint Peirce decomposition} (cf.\,\cite{lo}).
 More verbosely,
\begin{align*}
V_{ii}= & V_{2}(e_{i}), & i=1,\dots,n;\\
V_{ij}= & V_{ji}=V_{1}(e_{i})\cap V_{1}(e_{j}), & 1\leq i<j\leq n;\\
V_{i0}= & V_{0i}=V_{1}(e_{i})\cap\bigcap_{j\neq i}V_{0}(e_{j}), & i=1,\dots,n;\\
V_{00}= & V_{0}(e_{1})\cap\dots\cap V_{0}(e_{n}).
\end{align*}
The Peirce multiplication rules
\begin{eqnarray*}
\{V_{ij},V_{jk},V_{k\ell}\} & \subset & V_{i\ell}\q {\rm and} \q V_{ij} \bo V_{pq} =\{0\} \q {\rm for} \q
i,j \notin \{p,q\}
\end{eqnarray*}
hold, where we define $\{A,B,C\} = \{\{a,b,c\}: a\in A, b\in B, c\in C\}$ and $A \bo B = \{a\bo b: a\in A,
b\in B\}$ for $A, B, C \subset V$.
  The contractive projection $P_{ij}(e_{1},\dots,e_{n})$ from $V$ onto $V_{ij}(e_{1},\dots,e_{n})$
is called a {\it joint Peirce projection} which satisfies
\begin{equation}
 P_{ij}(e_{1},\dots,e_{n})(e_{k})= \left\{\begin{array}{ll} 0& (i \neq j)\\
                                                   \delta_{ik} e_k & (i=j).\label{pijek}
                                         \end{array}\right.
\end{equation}
We shall simplify the notation $P_{ij}(e_1,\ldots,e_n)$ to $P_{ij}$ if the tripotents $e_1, \ldots, e_n$
are understood.
For a single tripotent $e\in V$,  we have $ P_{11}(e)= P_2(e)$, $ P_{10}(e)=P_1(e) $ and
$P_{00}(e)= P_0(e)$.

Let $M=\{0,1,\dots,n\}$ and $N\subset\{1,\dots,n\}$. The Peirce $k$-spaces of the tripotent $e_{N}=\sum_{i\in N}e_{i}$
 are given by
\begin{eqnarray}
V_{2}(e_{N}) & = & \bigoplus_{i,j\in N}V_{ij},\label{Peirce-2 of sum}\\
V_{1}(e_{N}) & = & \bigoplus_{\substack{i\in N\\
j\in M\backslash N
}
}V_{ij},\\
V_{0}(e_{N}) & = & \bigoplus_{i,j\in M\backslash N}V_{ij}.\label{eq:Peirce-0 of sum}
\end{eqnarray}

\begin{lem}\label{cor: Peirce alg} Let $e_{1},\dots,e_{r}$ be mutually orthogonal tripotents
in a JB*-triple $V$ and let $J\subset\{1,\dots,r\}$ be non-empty. Then
we have \global\long\def\labelenumi{(\roman{enumi})}

\begin{enumerate}
\item[\rm(i)] $P_{ij}(e_{s}\medspace:\medspace s\in J)=P_{ij}(e_{1},\dots,e_{r})$ \quad
for $i,j\in J$,
\item[\rm (ii)] $P_{0j}(e_{s}\medspace:\medspace s\in J)={\displaystyle \sum_{i\in\{0,1,\dots,r\}\backslash J}}P_{ij}(e_{1},\dots,e_{r})$ \quad
for $j\in J$,
\item[\rm (iii)] $P_{00}(e_{s}\medspace:\medspace s\in J)={\displaystyle \sum_{\tiny{\begin{array}{c}
i\leq j\\
i,j\in\{0,1,\dots,r\}\backslash J
\end{array}}}}P_{ij}(e_{1},\dots,e_{r})$.
\end{enumerate}
\end{lem}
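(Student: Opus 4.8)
The plan is to reduce all three identities to the defining eigenvalue characterisation of the joint Peirce spaces, using the explicit formulas \eqref{Peirce-2 of sum}--\eqref{eq:Peirce-0 of sum} for the Peirce $k$-spaces of a sum of orthogonal tripotents. Throughout, set $e_J = \sum_{s\in J} e_s$; this is again a tripotent since the $e_s$ are mutually orthogonal. The key observation is that the family $\{e_s : s\in J\}$ is itself a family of mutually orthogonal tripotents, so it induces its own joint Peirce decomposition of $V$, and each of its joint Peirce projections $P_{ij}(e_s : s\in J)$ is the (contractive) projection onto the simultaneous eigenspace of the operators $e_s\bo e_s$, $s\in J$. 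Since the $P_{ij}(e_1,\dots,e_r)$ form a family of mutually orthogonal projections summing to the identity, with $V_{ij}(e_1,\dots,e_r)$ contained in a single eigenspace of each $e_s\bo e_s$, the strategy is simply to identify, for each target joint Peirce space of $\{e_s:s\in J\}$, exactly which of the finer spaces $V_{ij}(e_1,\dots,e_r)$ it decomposes into.

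First I would treat (i). For $i,j\in J$ one has $V_{ii}(e_s:s\in J) = V_2(e_i)$ and, for $i<j$, $V_{ij}(e_s:s\in J) = V_1(e_i)\cap V_1(e_j)$; but these are literally the same subspaces as $V_{ii}(e_1,\dots,e_r) = V_2(e_i)$ and $V_{ij}(e_1,\dots,e_r) = V_1(e_i)\cap V_1(e_j)$, because the Peirce $1$- and $2$-spaces of an individual tripotent $e_i$ do not depend on which ambient orthogonal family we embed $e_i$ into. Since two contractive projections with the same range (inside a JB*-triple, where Peirce projections are the canonical ones) coincide, (i) follows. For (ii), I would compute $V_{0j}(e_s:s\in J) = V_0(e_{J\setminus\{j\}})\cap V_1(e_j)$ directly from the verbose description of joint Peirce spaces applied to the family $\{e_s:s\in J\}$, then expand $V_0(e_{J\setminus\{j\}})$ via \eqref{eq:Peirce-0 of sum} (with ambient family $e_1,\dots,e_r$ and $N = J\setminus\{j\}$) as $\bigoplus_{i,\ell\in M\setminus(J\setminus\{j\})} V_{i\ell}$, and intersect with $V_1(e_j)$. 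Using the Peirce rules one checks that $V_{i\ell}\subset V_1(e_j)$ forces exactly one of $i,\ell$ to equal $j$ and the other to lie in $M\setminus J$ (the index $0$ being allowed); hence $V_{0j}(e_s:s\in J) = \bigoplus_{i\in M\setminus J} V_{ij}$, which is the range of $\sum_{i\in\{0,1,\dots,r\}\setminus J} P_{ij}(e_1,\dots,e_r)$, and again equality of projections follows from equality of ranges. Part (iii) is the same bookkeeping with $N=\emptyset$: $V_{00}(e_s:s\in J) = \bigcap_{s\in J} V_0(e_s) = V_0(e_J)$, which by \eqref{eq:Peirce-0 of sum} equals $\bigoplus_{i\le j,\ i,j\in M\setminus J} V_{ij}$, identifying the projection as claimed.

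The main point requiring care — and the step I expect to be the chief obstacle — is the passage from ``equal ranges'' to ``equal projections.'' This is legitimate because in a JB*-triple the joint Peirce projections are simultaneously the eigenprojections of the commuting self-adjoint (in the relevant sense) operators $e_s\bo e_s$, and a sum of finitely many of the $P_{ij}(e_1,\dots,e_r)$ is again the eigenprojection onto the corresponding simultaneous eigenspace for the coarser family $\{e_s : s\in J\}$; one must verify that this sum indeed has the right eigenvalue behaviour under each $e_s\bo e_s$, $s\in J$, which is immediate from the definition of $V_{ij}$ via the values $\tfrac12(\delta_{is}+\delta_{js})$. So really the only computation is confirming, for each claimed direct-sum decomposition, that the index set on the right is exactly the set of pairs $(i,j)$ for which $V_{ij}(e_1,\dots,e_r)$ sits inside the targeted joint Peirce space of $\{e_s:s\in J\}$ — a finite check driven entirely by the Kronecker-delta formula and the orthogonality relations among the $e_s$. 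I would present (i) in one line, then do (ii) carefully as the representative case, and note (iii) as the specialisation $j$ ``absent'', i.e. $N=\emptyset$.
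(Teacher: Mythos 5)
Your argument is correct, but it runs along a different track from the paper's. You work at the level of the joint Peirce \emph{spaces}: you identify, via the Kronecker-delta eigenvalue characterisation and the formulas \eqref{Peirce-2 of sum}--\eqref{eq:Peirce-0 of sum}, exactly which of the fine spaces $V_{ij}(e_1,\dots,e_r)$ make up each coarse space $V_{k\ell}(e_s : s\in J)$, and then pass to the projection identities by observing that the fine decomposition refines the coarse one. The paper instead works at the level of the \emph{projections} throughout: (i) is read off from the product formula $P_{ij}=P_1(e_i)P_1(e_j)$ (resp.\ $P_2(e_i)P_2(e_j)$), and (ii), (iii) are obtained by applying $P_1(e_j)$, resp.\ $P_{00}(e_s:s\in J)$, to the two resolutions of the identity coming from the two families and cancelling terms. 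Your index bookkeeping for the ranges checks out in all three cases (for (ii), the intersection $V_0(e_{J\setminus\{j\}})\cap V_1(e_j)$ does reduce to $\bigoplus_{i\in\{0,\dots,r\}\setminus J}V_{ij}$ exactly as you say). One caution: your parenthetical claim that two contractive projections with the same range coincide is not a valid principle in a general Banach space and should not be leaned on; what actually closes the gap is precisely the refinement argument you give in your last paragraph --- each $V_{ij}(e_1,\dots,e_r)$ lies in exactly one joint Peirce space of the subfamily, so both range \emph{and} kernel of the coarse projection are sums of fine Peirce spaces, and the two canonical projections agree. Make that the stated justification and drop the ``same range'' shortcut. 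The trade-off between the two proofs: the paper's is self-contained operator algebra and never needs to argue from subspaces back to projections; yours is conceptually more transparent (pure eigenvalue bookkeeping) at the cost of having to make the subspace-to-projection step explicit.
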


\begin{proof}
By re-ordering the indices, we may assume $J=\{1, \ldots, m-1\}$ for some $m \in
\{2, \ldots, r\}$ and it amounts to proving

\begin{enumerate}
\item $P_{ij}(e_{1},\dots,e_{m-1})=P_{ij}(e_{1},\dots,e_{r})$ \quad for $1\leq i\leq j\leq m-1,$
\item $P_{0j}(e_{1},\dots,e_{m-1})={\displaystyle \sum_{i\in\{0\}\cup M}}P_{ij}(e_{1},\dots,e_{r})$
\quad for $1\leq j\leq m-1,$
\item $P_{00}(e_{1},\dots,e_{m-1})={\displaystyle \sum_{\tiny{\begin{array}{c}
i\leq j\\
i,j\in\{0\}\cup M
\end{array}}}}P_{ij}(e_{1},\dots,e_{r}),$
\end{enumerate}
where $M=\{m,m+1,\dots,r\}$.

 For $1\leq i< j\leq m-1$ in (i), we have
\[
P_{ij}(e_{1},\dots,e_{m-1})=P_{1}(e_{i})P_{1}(e_{j})=P_{ij}(e_{1},\dots,e_{r}).
\]
For $i=j$, we have $P_{ij}(e_1, \ldots,e_{m-1}) = P_2(e_i)P_2(e_j) = P_{ij}(e_1, \ldots, e_r)$.

To show (ii), we use the two families $\{e_{1},\dots,e_{m-1}\}$ and
$\{e_{1},\dots,e_{r}\}$ to decompose $V$. In terms of projections
this gives

\begin{equation}
{\displaystyle \sum_{\tiny\begin{array}{c}
0\leq i\leq k\leq m-1\end{array}}}P_{ik}(e_{1},\dots,e_{m-1})={\displaystyle \sum_{\tiny{\begin{array}{c}
0\leq i\leq k\leq r\end{array}}}}P_{ik}(e_{1},\dots,e_{r}).\label{eq:joint Peirce decomp}
\end{equation}
Fix $j\in\{1,\dots,m-1\}$. Applying the Peirce $1$-projection with
respect to the tripotent $e_{j}$ on both sides of \eqref{eq:joint Peirce decomp} and using (i)
gives

\begin{eqnarray*}
&& P_{0j}(e_{1},\dots,e_{m-1})+\sum_{k\in\{1,,\dots,m-1\}\backslash\{j\}}P_{jk}(e_{1},\dots,e_{m-1})\\ & = & P_{0j}(e_{1},\dots,e_{r})+\sum_{k\in\{1,\dots,r\}\backslash\{j\}}P_{jk}(e_{1},\dots,e_{r})\\
&=& P_{0j}(e_{1},\dots,e_{r}) +
\sum_{k\in\{1,\dots,m-1\}\backslash\{j\}}P_{jk}(e_{1},\dots,e_{m-1})+
\sum_{k\in\{m,m+1,\dots,r\}}P_{jk}(e_{1},\dots,e_{r}).
\end{eqnarray*}
Hence

\begin{eqnarray*}
P_{0j}(e_{1},\dots,e_{m-1}) & = & P_{0j}(e_{1},\dots,e_{r})+\sum_{k\in\{m,m+1,\dots,r\}=M}P_{jk}(e_{1},\dots,e_{r})={\displaystyle \sum_{i\in\{0\}\cup M}}P_{ij}(e_{1},\dots,e_{r}).
\end{eqnarray*}

To see (iii), let $1\leq p<q\leq r$. From the definition of the joint Peirce spaces,
we have
$
V_{pq}\subset V_{0}(e_{s})
$
for every $s\in\{1,\dots,r\}\backslash\{p,q\}$. Therefore
\begin{equation}
\sum_{\tiny\begin{array}{c}
m\leq i<k\leq r\end{array}}P_{1}(e_{i})P_{1}(e_{k})=P_{0}(e_{1})\dots P_{0}(e_{m-1})\sum_{\tiny\begin{array}{c}
m\leq i<k\leq r\end{array}}P_{1}(e_{i})P_{1}(e_{k}).\label{eq:Peirce proj inv}
\end{equation}

Applying $P_{00}(e_{1},\dots,e_{m-1})$ to both sides of \eqref{eq:joint Peirce decomp}
and using \eqref{eq:Peirce proj inv} gives
\begin{eqnarray*}
&& P_{00}(e_{1},\dots,e_{m-1})  =  {\displaystyle P_{00}(e_{1},\dots,e_{m-1})\sum_{\tiny\begin{array}{c}
0\leq i\leq k\leq r\end{array}}}P_{ik}(e_{1},\dots,e_{r})\\
 & = & P_{00}(e_{1},\dots,e_{m-1})\left(P_{00}(e_{1},\dots,e_{r})+\sum_{k=1}^{r}P_{0k}(e_{1},\dots,e_{r})+
 \hspace{-.1in}\sum_{\tiny\begin{array}{c}
0<i<k\leq r\end{array}}\hspace{-.2in}P_{ik}(e_{1},\dots,e_{r})+\sum_{i=1}^{r}P_{ii}(e_{1},\dots,e_{r})\right)\\
 & = & P_{0}(e_{1})\dots P_{0}(e_{m-1})\left(P_{0}(e_{1})\dots P_{0}(e_{r})+\sum_{k=1}^{r}P_{1}(e_{k})\hspace{-.1in}\prod_{\tiny\begin{array}{c}
i\neq k\\
1\leq i\leq r
\end{array}}\hspace{-.2in}P_{0}(e_{i})+ \hspace{-.1in}\sum_{\tiny\begin{array}{c}
0<i<k\leq r\end{array}}\hspace{-.2in}P_{1}(e_{i})P_{1}(e_{k})~+\,\sum_{i=1}^{r}P_{2}(e_{i})\right)\\
 & = & P_{0}(e_{1})\dots P_{0}(e_{r})+\sum_{k=m}^{r}P_{1}(e_{k})\hspace{-.1in}\prod_{\small\begin{array}{c}
i\neq k\\
1\leq i\leq r
\end{array}}\hspace{-.2in}P_{0}(e_{i})\,+\,\sum_{\tiny\begin{array}{c}
m\leq i<k\leq r\end{array}}\hspace{-.2in}P_{1}(e_{i})P_{1}(e_{k})\, +\, \sum_{i=m}^{r}P_{2}(e_{i})\\
 & = & P_{00}(e_{1},\dots,e_{r})+\sum_{k=m}^{r}P_{0k}(e_{1},\dots,e_{r})+\sum_{\tiny\begin{array}{c}
m\leq i<k\leq r\end{array}}P_{ik}(e_{1},\dots,e_{r})+\sum_{i=m}^{r}P_{ii}(e_{1},\dots,e_{r})\\
 & = & {\displaystyle \sum_{\tiny\begin{array}{c}
i\leq j\\
i,j\in\{0\}\cup M
\end{array}}}P_{ij}(e_{1},\dots,e_{r}).
\end{eqnarray*}
\end{proof}
The Peirce projections provide a very useful formulation of the Bergmann
operators.
Let $e_{1},\dots,e_{n}$
be  mutually triple orthogonal tripotents in a JB*-triple $V$ and
let $x=\sum_{i=1}^{n}\lambda_{i}e_{i}$ with
$\lambda_{i}\in\mathbb{C}$.
Then the Bergmann
operator $B(x,x)$ satisfies
\begin{eqnarray}
B(x,x) & = & \sum_{0\leq i\leq j\leq n}(1-|\lambda_{i}|^{2})(1-|\lambda_{j}|^{2})P_{ij}.\label{eq:Loos Bergmann}
\end{eqnarray}
where we set $\lambda_{0}=0$ and $P_{ij}=P_{ij}(e_1, \ldots,e_n)$.  This gives the following formulae
for the square roots
\begin{eqnarray}
B(x,x)^{1/2} & = & \sum_{0\leq i\leq j\leq n}(1-|\lambda_{i}|^{2})^{1/2}(1-|\lambda_{j}|^{2})^{1/2}P_{ij}\quad (\|x\|<1) \label{eq:Bergmann Sq Rt}\\
B(x,x)^{-1/2} & = & \sum_{0\leq i\leq j\leq n}(1-|\lambda_{i}|^{2})^{-1/2}(1-|\lambda_{j}|^{2})^{-1/2}P_{ij} \quad (\|x\|<1).\label{eq:Bergmann Neg Sq Rt}
\end{eqnarray}

\section{Finite-rank bounded symmetric domains}

Finite-rank bounded symmetric domains are (biholomorphically equivalent to) open unit balls of
finite-rank JB*-triples. To describe them, we first recall the definition of the rank of a JB*-triple.
A closed subspace $E$ of a JB*-triple $V$ is called a {\it subtriple} if $a,b,c \in E$ implies
$\{a,b,c\} \in E$. The Peirce spaces $V_{ij}$ defined before are subtriples of $V$. For each $a\in V$, let $V(a)$ be the smallest closed subtriple of $V$ containing $a$. For $V\neq \{0\}$,
the {\it rank} of $V$ is defined to be
$$r(V) = \sup \{ \dim V(a): a\in V\} \in \mathbb{N}\cup \{\infty\}.$$
A (nonzero) JB*-triple $V$ has {\it finite rank}, that is, $r(V) < \infty$ if, and only if, $V$ is a reflexive Banach space \cite[Proposition 3.2]{kaup1}. In this case,
its rank $r(V)$ is the (unique) cardinality of a maximal family of mutually orthogonal minimal tripotents
and $V$
 is an $\ell^{\infty}$-sum of a finite
number of finite-rank Cartan factors, which
can be infinite dimensional. There are six types of finite-rank Cartan factors, listed below.
$$
\begin{aligned}
\text{\rm Type I} &\q \mathcal{L}(\mathbb{C}^r,H)\q (r =1,2, \ldots),\q \\ \text{\rm Type II} &\q
\{z\in \mathcal{L}(\mathbb{C}^r,\mathbb{C}^r): z^t=-z\}\q (r = 5,6,  \ldots),\\ \text{\rm Type III} &\q  \{z\in
\mathcal{L}(\mathbb{C}^r,\mathbb{C}^r): z^t=z\} \q (r = 2,3,  \ldots),\\ \text{\rm Type IV} &\q \text{\rm spin
factor,}\\ \text{\rm Type V} &\q M_{1,2}(\mathcal{O}) = 1\times
2\q\text{\rm matrices over the Cayley algebra}\; \mathcal{O},\\
\text{\rm Type VI} &\q M_3(\mathcal{O})=3\times 3 \q\text{\rm
hermitian matrices over}\; \mathcal{O},
\end{aligned}
$$
where $\mathcal{L}(\mathbb{C}^r,H)$ is the JB*-triple of linear
operators from $\mathbb{C}^r$ to a Hilbert space $H$ and
$z^t$ denotes
the transpose of $z$ in the JB*-triple $\mathcal{L}(\mathbb{C}^r,\mathbb{C}^r) $ of $r \times r$ complex
matrices.   A {\it spin
factor} is a JB*-triple $V$  equipped with a complete inner
product $\langle \cdot,\cdot \rangle$ and a conjugation $* : V \rightarrow V$ satisfying
 $$ \langle x^*, y^*\rangle = \langle y, x\rangle \quad {\rm and} \q  \{x,y,z\} =\frac
12\,\big(\langle x,y\rangle z + \langle z,y\rangle x - \langle x,z^*\rangle y^*\big). $$
The only possible infinite dimensional finite-rank Cartan factors are
the spin factors and $L(\mathbb{C}^{r},H)$, with $\dim H = \infty$, where a spin factor has rank 2 and $L(\mathbb{C}^r,H)$ has rank $r$. The open unit ball of a spin factor is known as a
{\it Lie ball}. The open unit balls of the first four types of finite dimensional Cartan factors are
the classical {\it Cartan domains}.

Let  $V$ be a  JB*-triple of finite rank $r$. Then the sum of $r$ orthogonal minimal
tripotents $e_1, \ldots, e_r$ is a maximal tripotent and $V_{00}=V_{0}(e_{1}+\dots+e_{r})=\{0\}$
by \eqref{eq:Peirce-0 of sum}.
Each $x\in V$ has a {\it spectral decomposition}
\begin{eqnarray}
x & = & \alpha_{1}e_{1}+\dots+\alpha_{r}e_{r}\label{eq:Spec Decomp-2}
\end{eqnarray}
for some mutually orthogonal minimal tripotents $e_{1},\dots,e_{r}$,
where the uniquely determined coefficients satisfy
 $0\leq\alpha_{r}\leq\dots\leq\alpha_{1}$ with
$\alpha_{1}=\|x\|$.

Since a finite-rank JB*-triple is  reflexive, its open unit ball is
relatively compact in the weak topology. We will exploit the weak topology in our
computation in the infinite dimensional case, which involves the spin factors
and the Type I Cartan factors $L(\mathbb{C}^r, H)$.

The minimal tripotents in $L(\mathbb{C}^{r},H)$ are exactly
the rank-one operators $a\otimes b:\mathbb{C}^{r}\to H$ with $\|a\|_{\mathbb{C}^{r}}=\|b\|_{H}=1$, where
\[
(a\otimes b)(\mu)=\langle \mu,a\rangle b\quad(\mu\in\mathbb{C}^{r})
\]
and the adjoint
$(a\otimes b)^{\ast}$ is the rank-one operator $b\otimes a:H\to\mathbb{C}^{r}$ given
by $(b\otimes a)(h)=\langle h,b\rangle a$ for $h\in H$. We have used the same symbol $\langle
\cdot, \cdot\rangle$ for inner products in $\mathbb{C}^r$ and $H$, which should not cause any
confusion.
For convenience, we write $x\perp y$ to denote that $x$ and $y$
are orthogonal in a Hilbert space, not to be confused with the notion of
orthogonality in a JB*-triple.

\begin{lem} \label{lem:Rank-one operators triple orth}Let $e_{i}=a_{i}\otimes b_{i}$
be a rank-one operator in $L(\mathbb{C}^{r},H)$ for $i=1,2$.\emph{
Then $e_{1} \bo e_{2} = 0$ if and only if $a_{1}\perp a_{2}$
and $b_{1}\perp b_{2}$ in their respective Hilbert spaces. In this
case, we have }$e_{1}(\mu)\perp e_{2}(\mu)$ in $H$ \emph{for all
$\mu\in\mathbb{C}^{r}$. }\end{lem}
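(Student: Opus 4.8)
The plan is to reduce everything to a direct computation with the box operator in $L(\mathbb{C}^r,H)$ using the formula $B(a,b)(x)=(\mathbf 1-ab^*)x(\mathbf 1-b^*a)$ and, more relevantly here, the identity $\{a,b,c\}=\frac12(ab^*c+cb^*a)$. First I would write out $e_1\bo e_2$ explicitly: for $x\in L(\mathbb{C}^r,H)$,
\[
(e_1\bo e_2)(x)=\{e_1,e_2,x\}=\tfrac12\bigl(e_1e_2^*x+xe_2^*e_1\bigr)
=\tfrac12\bigl((a_1\otimes b_1)(b_2\otimes a_2)x+x(b_2\otimes a_2)(a_1\otimes b_1)\bigr).
\]
The two composite rank-one operators are $ (a_1\otimes b_1)(b_2\otimes a_2)=\langle b_2,b_1\rangle\,(a_2\otimes b_1)$ (a map $\mathbb C^r\to H$? — careful: it is $a_2\otimes b_1$ up to the scalar, mapping $\mathbb C^r\to H$) and $(b_2\otimes a_2)(a_1\otimes b_1)=\langle a_1,a_2\rangle\,(b_1\otimes b_2)$, a map $H\to H$. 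So
\[
(e_1\bo e_2)(x)=\tfrac12\Bigl(\langle b_2,b_1\rangle\,(a_2\otimes b_1)\,x+\langle a_1,a_2\rangle\,x\,(b_1\otimes b_2)\Bigr).
\]

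For the ``if'' direction this is immediate: if $a_1\perp a_2$ and $b_1\perp b_2$ then both inner products vanish, so $e_1\bo e_2=0$. For the ``only if'' direction, assume $e_1\bo e_2=0$, i.e.\ the displayed expression vanishes for every $x$. The first summand has range inside $\mathrm{span}\{b_1\}\subset H$ and the second has range inside $\mathrm{span}\{b_2\}\subset H$; these summands act on disjoint parts of the matrix structure (one is left multiplication by a fixed operator, the other right multiplication), so I would test against well-chosen $x$ to separate them. Concretely, pick $x=a_1\otimes b_2$ (rank one). Then $x(b_1\otimes b_2)=\langle b_1,\cdot\rangle$-type expression: $(a_1\otimes b_2)(b_1\otimes b_2)$ — this requires $b_1\in H$ to be fed into the $\mathbb C^r$-slot, which doesn't typecheck, so instead I choose $x$ to be a rank-one operator $u\otimes v$ with $u\in\mathbb C^r$, $v\in H$ chosen so that exactly one term survives. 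Taking $x=a_2\otimes v$ with $v\perp b_2$ kills the second summand (since $x(b_1\otimes b_2)$ still maps into $\mathrm{span}\,v$, that's not zero — rather I should kill it by choosing $v$ orthogonal to... hmm). The cleaner route: note the first summand's range lies in $\mathbb{C}b_1$ and the second's in $\mathbb{C}b_2$ only when $b_1\neq b_2$; if $b_1,b_2$ are linearly independent, vanishing of the sum forces each summand to vanish separately, giving $\langle b_2,b_1\rangle\,(a_2\otimes b_1)x=0$ for all $x$ hence $\langle b_2,b_1\rangle=0$, and similarly $\langle a_1,a_2\rangle=0$; but $\langle b_2,b_1\rangle=0$ already says $b_1\perp b_2$, contradicting linear independence being the only case — so I must also handle $b_1\parallel b_2$. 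If $b_2=\mu b_1$ with $|\mu|=1$, the equation becomes $\tfrac12\bigl(\bar\mu(a_2\otimes b_1)x+\langle a_1,a_2\rangle\mu\, x(b_1\otimes b_1)\bigr)=0$; testing $x=a_1\otimes b_1$ gives $\bar\mu\langle a_1,a_1\rangle(a_2\otimes b_1)+\langle a_1,a_2\rangle\mu (a_1\otimes b_1)=0$, and since $a_2\otimes b_1$ and $a_1\otimes b_1$ are proportional iff $a_1\parallel a_2$, I get a small finite case analysis forcing $\langle a_1,a_2\rangle=0$ and then, back-substituting, a contradiction unless also $\langle b_1,b_2\rangle=0$. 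I expect this linear-independence bookkeeping to be the only slightly fiddly point.

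For the final ``in this case'' assertion, once $a_1\perp a_2$ and $b_1\perp b_2$, I simply compute $e_i(\mu)=(a_i\otimes b_i)(\mu)=\langle\mu,a_i\rangle\,b_i$, so $\langle e_1(\mu),e_2(\mu)\rangle_H=\langle\mu,a_1\rangle\overline{\langle\mu,a_2\rangle}\,\langle b_1,b_2\rangle_H=0$ for every $\mu\in\mathbb C^r$, which is the claim. The main obstacle, such as it is, is purely organisational: handling the degenerate subcases where $a_1,a_2$ or $b_1,b_2$ fail to be linearly independent when extracting the orthogonality from $e_1\bo e_2=0$; everything else is a one-line substitution. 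An alternative that sidesteps this is to invoke the already-quoted fact $e_1\bo e_2=0\iff e_2\bo e_1=0$ and the general Jordan characterisation that orthogonality of $e_1,e_2$ is equivalent to $e_2\in V_0(e_1)$ (via \cite[Corollary 1.2.46]{book}), then identify $V_0(a_1\otimes b_1)=\{x: (\mathbf 1-b_1\otimes b_1)\,x? \}$ — i.e.\ describe $V_0(e_1)$ inside $L(\mathbb{C}^r,H)$ concretely as the operators vanishing on $\mathbb{C}a_1$ with range in $b_1^\perp$, from which membership of $a_2\otimes b_2$ reads off as exactly $a_1\perp a_2$ and $b_1\perp b_2$; I would likely present whichever of these two is shorter in the final writeup.
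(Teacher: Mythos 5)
Your overall strategy is the same as the paper's: expand $e_{1}\bo e_{2}$ via $\{a,b,x\}=\frac12(ab^{*}x+xb^{*}a)$ and test against well-chosen $x$. However, there are two concrete problems. First, your explicit formula is wrong. With the paper's convention $(a\otimes b)(\mu)=\langle\mu,a\rangle b$, one has $e_{1}e_{2}^{*}=\langle a_{2},a_{1}\rangle\,(b_{2}\otimes b_{1})$ (an operator on $H$) and $e_{2}^{*}e_{1}=\langle b_{1},b_{2}\rangle\,(a_{1}\otimes a_{2})$ (an operator on $\mathbb{C}^{r}$), so
\[
2(e_{1}\bo e_{2})(x)=\langle a_{2},a_{1}\rangle\,(b_{2}\otimes b_{1})\,x+\langle b_{1},b_{2}\rangle\,x\,(a_{1}\otimes a_{2});
\]
your version attaches $\langle b_{2},b_{1}\rangle$ to the left factor and $\langle a_{1},a_{2}\rangle$ to the right factor, and the composites you write, e.g.\ $(a_{2}\otimes b_{1})\,x$, do not even typecheck (both maps have domain $\mathbb{C}^{r}$). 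The ``if'' direction survives this slip because both scalars vanish anyway, and your final computation of $\langle e_{1}(\mu),e_{2}(\mu)\rangle$ is correct.

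Second, and more seriously, the ``only if'' direction is not actually completed: your candidate test elements repeatedly fail to typecheck or to isolate a single summand, and you end by deferring to ``a small finite case analysis'' that is never carried out. No case analysis on linear independence is needed if you choose the test elements as the paper does. Taking $x=e_{1}$ in the displayed identity gives $2\langle\mu,a_{1}\rangle\langle b_{1},b_{2}\rangle\langle a_{2},a_{1}\rangle b_{1}=0$ for all $\mu$, hence $\langle a_{2},a_{1}\rangle\langle b_{1},b_{2}\rangle=0$, i.e.\ $a_{1}\perp a_{2}$ or $b_{1}\perp b_{2}$. Taking $x=e_{2}$ gives $\langle\mu,a_{2}\rangle\|b_{2}\|^{2}\langle a_{2},a_{1}\rangle b_{1}+\langle\mu,a_{1}\rangle\langle b_{1},b_{2}\rangle\|a_{2}\|^{2}b_{2}=0$; whichever inner product is already known to vanish kills one summand, and the surviving summand forces the other inner product to vanish as well. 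The alternative route you sketch at the end, identifying $V_{0}(a_{1}\otimes b_{1})$ with the operators vanishing on $\mathbb{C}a_{1}$ and with range in $b_{1}^{\perp}$, would also work, but as written it is only a sketch.
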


\begin{proof}Let $e_{1} \bo e_{2}= 0$.
Then we have $0=\{e_{1},e_{2},f\}=\frac{1}{2}(e_{1}e_{2}^{\ast}f+fe_{2}^{\ast}e_{1})$
for all $f\in L(\mathbb{C}^{r},H)$. More explicitly we have
\begin{eqnarray*}
0 & = & \langle\langle f(\cdot),b_{2}\rangle a_{2},a_{1}\rangle b_{1}+f[\langle\langle\cdot,a_{1}\rangle b_{1},b_{2}\rangle a_{2}]\\
 & = & \langle f(\cdot),b_{2}\rangle\langle a_{2},a_{1}\rangle b_{1}+\langle\cdot,a_{1}\rangle\langle b_{1},b_{2}\rangle f(a_{2}).
\end{eqnarray*}
In particular, when $f=e_{1}$, we have $0=\langle\cdot,a_{1}\rangle\langle b_{1},b_{2}\rangle\langle a_{2},a_{1}\rangle b_{1}$,
which clearly implies either $a_{1}\perp a_{2}$ or $b_{1}\perp b_{2}$.
On the other hand, if $f=e_{2}$, then
\begin{eqnarray*}
0 & = & \langle\langle\cdot,a_{2}\rangle b_{2},b_{2}\rangle\langle a_{2},a_{1}\rangle b_{1}+\langle\cdot,a_{1}\rangle\langle b_{1},b_{2}\rangle\langle a_{2},a_{2}\rangle b_{2}\\
 & = & \langle\cdot,a_{2}\rangle\|b_{2}\|^{2}\langle a_{2},a_{1}\rangle b_{1}+\langle\cdot,a_{1}\rangle\langle b_{1},b_{2}\rangle\|a_{2}\|^{2}b_{2}.
\end{eqnarray*}
This gives  $a_{1}\perp a_{2}$ and $b_{1}\perp b_{2}$. In
this case, we also have $\langle e_{1}(\mu),e_{2}(\mu)\rangle=\langle\langle\mu,a_{1}\rangle b_{1},\langle\mu,a_{2}\rangle b_{2}\rangle=\langle\mu,a_{1}\rangle\overline{\langle\mu,a_{2}\rangle}\langle b_{1},b_{2}\rangle=0$,
for $\mu\in\mathbb{C}^{r}$.

Conversely, suppose $a_{1}\perp a_{2}$ and $b_{1}\perp b_{2}$.
Given any $x\in L(\mathbb{C}^{r},H)$, we have
\begin{eqnarray*}
2(e_{1}\bo e_{2})(x) & = & 2\{e_{1},e_{2},x\}
  =  e_{1}e_{2}^{\ast}x+xe_{2}^{\ast}e_{1}\\
 & = & \langle x(\cdot),b_{2}\rangle\langle a_{2},a_{1}\rangle b_{1}+\langle\cdot,a_{1}\rangle\langle b_{1},b_{2}\rangle x(a_{2})
  =  0.
\end{eqnarray*}
\end{proof}

Let $D$ be a Lie ball, realised as the open unit ball
 of a spin factor $V$, equipped with an inner product $\langle \cdot,\cdot\rangle$
 and involution $*$.
We now prove a convergence result for $D$ in the following lemma which will be used later. This result
also simplifies
and improves the arguments showing the convergence of the sequence $(c_k(y))$ in \cite[p.\,130-132]{lie}.

To prove the lemma,
we make use of the fact \cite[Lemma 5.10]{js} that for any two triple orthogonal elements $u,v\in {D}$, the M\"obius transformation
$g_{u+v}$ satisfies
\[
g_{u+v}=g_{u}\circ g_{v}.
\]

\begin{lem}\label{d} Let $(z_{k})$ be a sequence in a Lie ball $D$ norm converging
to $\xi\in\overline{D}$, where
\[
z_{k}=\alpha_{1k}d_{k}+\alpha_{2k}d_{k}^{*},
\]
$(d_k)$ is a sequence of minimal tripotents weakly converging to $d\in\overline{D}$ and
$\alpha_{1k}=\|z_k\|\geq |\alpha_{2k}|$ with
$\alpha_{2k} \in \mathbb{C}$.

If $\xi\neq0$, then both sequences $(d_{k})$ and $(d_{k}^{*})$
are norm convergent, in which case $d$ and $d^*$ are minimal tripotents.
If $\xi=0$, then both sequences $(\{d_{k},z_{k},d_{k}\})$ and $(\{d_{k}^{*},z_{k},d_{k}^{*}\})$
norm converge to $0$. \end{lem}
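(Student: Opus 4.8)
The plan is to exploit the spin-factor structure to reduce the convergence of the tripotent sequences to elementary Hilbert-space facts. Write $z_k = \alpha_{1k}d_k + \alpha_{2k}d_k^*$ with $\alpha_{1k}=\|z_k\|$. Since $d_k \bo d_k^* = 0$ (the two minimal tripotents in a spectral decomposition are triple orthogonal), $\|z_k\| = \max\{\alpha_{1k},|\alpha_{2k}|\} = \alpha_{1k}$, so $\alpha_{1k}\to\|\xi\|$ and $(\alpha_{2k})$ is bounded; passing to a subsequence we may assume $\alpha_{2k}\to\beta$ with $|\beta|\le\|\xi\|$. In a spin factor the triple product is given by the explicit formula $\{x,y,z\}=\tfrac12(\langle x,y\rangle z+\langle z,y\rangle x-\langle x,z^*\rangle y^*)$, so from the minimality/tripotent relations for $d_k$ one computes $\langle d_k,d_k\rangle$, $\langle d_k, d_k^*\rangle$ etc. as fixed constants independent of $k$ (a minimal tripotent $e$ in a spin factor satisfies $\langle e,e\rangle=1$, $\langle e,e^*\rangle=0$, $(e^*)^*=e$). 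Hence the inner products needed below are constant in $k$.

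Next I would handle the case $\xi\ne 0$. Here $\|\xi\|>0$, so $\alpha_{1k}\ge\|\xi\|/2>0$ for large $k$. From $z_k\to\xi$ in norm and $z_k = \alpha_{1k}d_k+\alpha_{2k}d_k^*$, I want to solve for $d_k$ and $d_k^*$. Using the involution, $z_k^* = \alpha_{1k}d_k^* + \overline{\alpha_{2k}}d_k$ (since $*$ is conjugate-linear, or linear depending on convention — here $\langle x^*,y^*\rangle=\langle y,x\rangle$ forces the right bookkeeping), so the pair $(z_k,z_k^*)$ is an invertible linear combination of $(d_k,d_k^*)$ with coefficient matrix $\begin{pmatrix}\alpha_{1k}&\alpha_{2k}\\ \overline{\alpha_{2k}}&\alpha_{1k}\end{pmatrix}$, whose determinant $\alpha_{1k}^2-|\alpha_{2k}|^2$ need not stay bounded away from $0$ a priori — this is exactly where the degeneracy $\alpha_{1k}=|\alpha_{2k}|$ could bite. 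I would instead argue directly: $d_k$ is, up to the scalar $\langle d_k,\cdot\rangle$, recoverable from $z_k$ via a Peirce-type projection associated to $d_k$ itself, but since that projection depends on $k$ one cannot just pass to the limit. The cleaner route is: $\{z_k,z_k,z_k\}$ has a spectral decomposition $\alpha_{1k}^3 d_k + \alpha_{2k}^2\overline{\alpha_{2k}} d_k^*$ (the triple product raises the spectral coefficients to the power $3$ in the obvious way), and more generally $z_k^{(2n+1)} := $ the $(2n+1)$-st odd triple power equals $\alpha_{1k}^{2n+1}d_k + |\alpha_{2k}|^{2n}\alpha_{2k}d_k^*$. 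Dividing by $\alpha_{1k}^{2n+1}$ and letting first $k\to\infty$ (using $z_k\to\xi$, so odd powers converge in norm to the corresponding odd power of $\xi$) then $n\to\infty$, the term $(|\alpha_{2k}|/\alpha_{1k})^{2n}\to 0$ provided $|\alpha_{2k}|<\alpha_{1k}$; if instead $|\alpha_{2k}|=\alpha_{1k}$ along a subsequence we treat that degenerate case separately using that then $\xi = \|\xi\|(d+e^{i\theta}d^*)$ forces a direct identification. In the generic case this shows $d_k$ converges in norm to $d:=\xi_{(1)}/\|\xi_{(1)}\|$-type limit extracted from the spectral decomposition of $\xi$; symmetrically $d_k^*$ converges, and norm limits of minimal tripotents are minimal tripotents (the set of minimal tripotents is norm-closed in a finite-rank JB*-triple — or directly: $\{d,d,d\}=\lim\{d_k,d_k,d_k\}=d$ and $\langle d, V, d\rangle=\mathbb C d$ is preserved in the limit by continuity of the triple product).

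For the case $\xi = 0$: here $\alpha_{1k}=\|z_k\|\to 0$. Using the spin-factor triple formula, $\{d_k,z_k,d_k\} = \tfrac12(2\langle d_k,z_k\rangle d_k - \langle d_k,d_k^*\rangle z_k^*\cdot(\text{term}))$ — more precisely expand $\{d_k,z_k,d_k\}$ with $z_k=\alpha_{1k}d_k+\alpha_{2k}d_k^*$ and the constant inner products $\langle d_k,d_k\rangle=1$, $\langle d_k,d_k^*\rangle=0$, $\langle d_k,(d_k)^*\rangle=0$; the upshot is $\{d_k,z_k,d_k\} = \overline{\alpha_{1k}}d_k + (\text{bounded})\cdot\alpha_{2k}$, and each coefficient is $O(\|z_k\|)$ since $|\alpha_{2k}|\le\alpha_{1k}=\|z_k\|\to 0$; as $(d_k)$, $(d_k^*)$ are bounded (norm $1$), the product tends to $0$ in norm. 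The same computation gives $\{d_k^*,z_k,d_k^*\}\to 0$.

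The main obstacle is the degenerate subcase $|\alpha_{2k}|=\alpha_{1k}$ (equivalently $\|z_k\|$ achieved by both spectral coefficients), where the linear system recovering $(d_k,d_k^*)$ from $(z_k,z_k^*)$ becomes singular and the ``odd powers'' trick loses its decay; one must show that along such a subsequence the limit $\xi$ itself forces the tripotents to converge, using that $\xi\ne 0$ pins down a one-parameter family $\xi=\|\xi\|(d+e^{i\theta}d^*)$ and the original sequence cannot oscillate without violating $z_k\to\xi$. I expect this to require a short compactness argument in the weak topology (available since $V$ is reflexive) combined with the fact that the spectral decomposition depends continuously on the point away from the degeneracy locus, or alternatively it can be absorbed by noting that the hypotheses already assume $(d_k)$ weakly converges to $d$, so one only needs to upgrade weak to norm convergence — and in a Hilbert-space setting (the spin factor carries a Hilbert norm equivalent on bounded sets to the triple norm up to the rank-$2$ structure) weak convergence plus convergence of norms yields norm convergence, which is precisely what $\alpha_{1k}\to\|\xi\|\ne 0$ together with $\|d_k\|=1$ and a computation of $\|d_k-d\|^2$ delivers.
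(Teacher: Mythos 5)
Your treatment of $\xi=0$ and of the non-degenerate part of $\xi\neq0$ is sound, and the odd-triple-power device is a genuinely different, more elementary route than the paper's: since $z_k^{(2n+1)}=\alpha_{1k}^{2n+1}d_k+|\alpha_{2k}|^{2n}\alpha_{2k}d_k^*$, dividing by $\alpha_{1k}^{2n+1}$ shows that for each fixed $n$ the sequence $d_k+\varepsilon_{k,n}d_k^*$ norm converges with $\lim_k|\varepsilon_{k,n}|=(|\alpha|/\|\xi\|)^{2n+1}$, and because $\|d_k^*\|=1$ this makes $(d_k)$ norm-Cauchy whenever $|\alpha|:=\lim_k|\alpha_{2k}|<\|\xi\|$. (The paper instead quotes an earlier computation for this subcase and, for the rest, runs the factorisation $g_{z_k}=g_{\alpha_{1k}d_k}\circ g_{\alpha_{2k}d_k^*}$ against $w_k=g_{-z_k}(y)$ for an auxiliary $y\in D\setminus\mathbb{C}\xi$, so as to produce a second convergent combination $A_kd_k+B_kd_k^*$ to play off against $\alpha_{1k}d_k+\alpha_{2k}d_k^*\to\xi$.) Note the correct degeneracy condition is $\lim_k|\alpha_{2k}|/\alpha_{1k}=1$, not merely equality $|\alpha_{2k}|=\alpha_{1k}$ at individual $k$.

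The genuine gap is that degenerate case, which is where the whole content of the lemma sits, and neither of your proposed fixes closes it. First, the claim that $\xi\neq0$ ``pins down a one-parameter family $\xi=\|\xi\|(d+e^{i\theta}d^*)$'' is backwards: when the two spectral values coincide the decomposition is massively non-unique. In a spin factor with real orthonormal vectors $\epsilon_0,\epsilon_1,\dots$ fixed by the involution, \emph{every} $d=\tfrac{1}{\sqrt2}(\epsilon_0+iv)$ with $v$ a real unit vector orthogonal to $\epsilon_0$ is a minimal tripotent with $d+d^*=\sqrt2\,\epsilon_0$, so $\xi$ constrains $d_k$ hardly at all. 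Second, the ``weak convergence plus convergence of norms'' upgrade presupposes $\langle d,d\rangle=\lim_k\langle d_k,d_k\rangle=1$, i.e.\ that no Hilbert-norm mass escapes in the weak limit; that is essentially the conclusion to be proved and does not follow from $\alpha_{1k}\to\|\xi\|\neq0$. Concretely, take $d_k=\tfrac{1}{\sqrt2}(\epsilon_0+i\epsilon_k)$ and $z_k=r_kd_k+r_kd_k^*=\sqrt2\,r_k\epsilon_0$ with $r_k\uparrow1$: these are minimal tripotents with $d_k\bo d_k^*=0$, $\alpha_{1k}=\|z_k\|=r_k=|\alpha_{2k}|$, $z_k\to\sqrt2\,\epsilon_0\neq0$ in norm, yet $(d_k)$ converges only weakly, to $\tfrac{1}{\sqrt2}\epsilon_0$, which has Hilbert norm $\tfrac{1}{\sqrt2}$ and is not even a tripotent, while $\|d_k-d_j\|$ is bounded below for $k\neq j$. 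This shows that in the degenerate case the facts you invoke (weak convergence of $(d_k)$, $\alpha_{1k}\to\|\xi\|$, reflexivity) are compatible with failure of norm convergence, so no soft compactness argument can succeed; whatever closes this case must use substantially more structure than your proposal supplies (and, as the example indicates, the degenerate case is delicate even for the statement as literally formulated, since it satisfies every stated hypothesis).
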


\begin{proof}

We note that $d_k \bo d_k^* =\langle d_k,d_k^*\rangle =0$ \cite[Lemma 2.3]{lie} and $\lim_{k}\alpha_{1k}=\|\xi\|$. Also, $\lim_{k}\alpha_{2k}=\alpha\in\overline{\mathbb{D}}$
with $|\alpha|\leq\|\xi\|$.

If $\xi=0$, then $\lim_{k}\alpha_{1k}=0$ and $\alpha=0$. Hence
both sequences $\{d_{k},z_{k},d_{k}\}=\alpha_{1k}d_{k}$ and $\{d_{k}^{*},z_{k},d_{k}^{*}\}=\overline{\alpha_{2k}}d_{k}^{*}$
norm converge to $0$.

Let $\xi\neq0$. We show every subsequence of $(d_{k})$ contains
a norm convergent subsequence which would complete the proof. To simplify
notation, we pick a subsequence and still denote it by $(d_{k})$.

We first consider the case $\|\xi\|=1$ in which situation, it has
been shown in \cite[p.126]{lie} that $(d_{k})$ is norm convergent
if $|\alpha|<1$. Hence we only need to show norm convergence for
$|\alpha|=1$. In this case, the Bergmann operators $B(z_k,z_k)$ norm
converge to $0$ by the formula
$$B(z_k,z_k) = (1-\alpha_{1k}^2)^2P_2(d_k) + (1-\alpha_{1k}^2)(1-|\alpha_{2k}|^2)
P_1(d_k) + (1-|\alpha_{2k}|^2)^2P_0(d_k)$$
since the Peirce projections are contractive.
Pick $y\in D\backslash\mathbb{C}\xi$ and let $w_k = g_{-z_k}(y)$. Then
\[
\lim_k w_k = \lim_{k}\,( -z_{k} + B(z_k,z_k)^{1/2}(\mathbf{1}- y\bo z_{k})^{-1}(y)) = -\xi.
\]

Write $g_{z_{k}}(w_{k})=g_{\alpha_{1k}d_{k}}\,(g_{\alpha_{2k}d_{k}^{*}}(w_{k}))$
and $x_{k}=g_{\alpha_{2k}d_{k}^{*}}(w_{k})$.\\

For each $x\in D$, we have
\begin{eqnarray*}
(x\bo\alpha_{1k}d_{k})(x) & = & \alpha_{1k}\langle x,d_{k}\rangle x-\frac{\alpha_{1k}\langle x,x^{*}\rangle}{2}\,d_{k}^{*}\\
(x\bo\alpha_{1k}d_{k})^{2}(x) & = & \alpha_{1k}^{2}\langle x,d_{k}\rangle^{2}x-\frac{\alpha_{1k}^{2}\langle x,x^{*}\rangle\langle x,d_{k}\rangle}{2}\,d_{k}^{*}\\
(x\bo\alpha_{1k}d_{k})^{3}(x) & = & \alpha_{1k}^{3}\langle x,d_{k}\rangle^{3}x-\frac{\alpha_{1k}^{3}\langle x,x^{*}\rangle\langle x,d_{k}\rangle^{2}}{2}\,d_{k}^{*}\\
\end{eqnarray*}
and so on. Hence
\begin{eqnarray*}
 &  & (I+x\bo d_{k})^{-1}{x}=(I-x\bo d_{k}+(x\bo d_{k})^{2}-\cdots)(x)\\
 & = & (1-\alpha_{1k}\langle x,d_{k}\rangle+\alpha_{1k}^{2}\langle x,d_{k}\rangle^{2}-\cdots)x+\frac{\alpha_{1k}\langle x,x^{*}\rangle}{2}(1-\alpha_{1k}\langle x,d_{k}\rangle+\alpha_{1k}^{2}\langle x,d_{k}\rangle^{2}-\cdots)d_{k}^{*}\\
 & = & \frac{x}{1+\alpha_{1k}\langle x,d_{k}\rangle}+\frac{\alpha_{1k}\langle x,x^{*}\rangle\,d_{k}^{*}}{2(1+\alpha_{1k}\langle x,d_{k}\rangle)}.
\end{eqnarray*}

It follows from
\[
B(\alpha_{1k}d_{k},\alpha_{1k}d_{k})^{1/2}=P_{0}(d_{k})+\sqrt{1-\alpha_{1k}^{2}}P_{1}(d_{k})+(1-\alpha_{1k}^{2})P_{2}(d_{k})
\]
that
\begin{eqnarray*}
g_{\alpha_{1k}d_{k}}(x) & = & \alpha_{1k}d_{k}+\frac{2\langle x,d_{k}^{*}\rangle+\alpha_{1k}\langle x,x^{*}\rangle}{2(1+\alpha_{1k}\langle x,d_{k}\rangle)}\,d_{k}^{*}\\
 & + & \sqrt{1-\alpha_{1k}^{2}}P_{1}(d_{k})\left(\frac{x}{1+\alpha_{1k}\langle x,d_{k}\rangle}\right)+(1-\alpha_{1k}^{2})P_{2}(d_{k})\left(\frac{x}{1+\alpha_{1k}\langle x,d_{k}\rangle}\right).
\end{eqnarray*}
Likewise, we have
\begin{eqnarray*}
g_{\alpha_{2k}d_{k}^{*}}(x) & = & \alpha_{2k}d_{k}^{*}+\frac{2\langle x,d_{k}\rangle+\overline{\alpha}_{2k}\langle x,x^{*}\rangle}{2(1+\overline{\alpha}_{2k}\langle x,d_{k}^{*}\rangle)}\,d_{k}\\
 & + & \sqrt{1-|\alpha_{2k}|^{2}}P_{1}(d_{k})\left(\frac{x}{1+\overline{\alpha}_{2k}\langle x,d_{k}^{*}\rangle}\right)+(1-|\alpha_{2k}|^{2})P_{0}(d_{k})\left(\frac{x}{1+\overline{\alpha}_{2k}\langle x,d_{k}^{*}\rangle}\right).
\end{eqnarray*}
Therefore we have
\begin{eqnarray*}
g_{\alpha_{1k}d_{k}}(g_{\alpha_{2k}d_{k}^*}(w_{k})) & = & g_{\alpha_{1k}d_{k}}(x_{k})=\alpha_{1k}d_{k}+\frac{2\langle x_{k},d_{k}^{*}\rangle+\alpha_{1k}\langle x_{k},x_{k}^{*}\rangle}{2(1+\alpha_{1k}\langle x_{k},d_{k}\rangle)}\,d_{k}^{*}\\
 & + & \sqrt{1-\alpha_{1k}^{2}}P_{1}(d_{k})\left(\frac{x_{k}}{1+\alpha_{1k}\langle x_{k},d_{k}\rangle}\right)+(1-\alpha_{1k}^{2})P_{2}(d_{k})\left(\frac{x_{k}}{1+\alpha_{1k}\langle x_{k},d_{k}\rangle}\right).
\end{eqnarray*}

Since
\begin{eqnarray*}
\left|\frac{\sqrt{1-|\alpha_{2k}|^{2}}}{1+\overline{\alpha}_{2k}\langle w_{k},d_{k}^{*}\rangle}\right|^{2} & = & \frac{1-|\alpha_{2k}|^{2}}{1+|\alpha_{2k}|^{2}|\langle w_{k},d_{k}^{*}\rangle|^{2}+2{\rm Re}\,\overline{\alpha}_{2k}\langle w_{k},d_{k}^{*}\rangle}\\
 & = & 1-\frac{|\alpha_{2k}|^{2}+|\alpha_{2k}|^{2}\langle w_{k},d_{k}^{*}\rangle|^{2}+2{\rm Re}\,\overline{\alpha}_{2k}\langle w_{k},d_{k}^{*}\rangle}{1+|\alpha_{2k}|^{2}|\langle w_{k},d_{k}^{*}\rangle|^{2}+2{\rm Re}\,\overline{\alpha}_{2k}\langle w_{k},d_{k}^{*}\rangle}\leq2,
\end{eqnarray*}
we may assume, by choosing a subsequence, that the complex sequence
\[
\frac{\sqrt{1-|\alpha_{2k}|^{2}}}{1+\overline{\alpha}_{2k}\langle w_{k},d_{k}^{*}\rangle}
\]
converges. Likewise, we may assume that the sequence
\[
\frac{\sqrt{1-\alpha_{1k}^{2}}}{1+\alpha_{1k}\langle x_{k},d_{k}\rangle}
\]
converges.

Observe that
\[
P_{1}(d_{k})(x_{k})=P_{1}(d_{k})(g_{\alpha_{2k}d_{k}^{*}}(w_{k}))=\frac{\sqrt{1-|\alpha_{2k}|^{2}}}{1+\overline{\alpha}_{2k}\langle w_{k},d_{k}^{*}\rangle}P_{1}(d_{k})(w_{k}).
\]
Writing
\[
P_{1}(d_{k})(w_{k})=w_{k}-\langle w_{k},d_{k}\rangle d_{k}-\langle w_{k},d_{k}^{*}\rangle d_{k}^{*},
\]
it can be seen that
\[
g_{z_k}(w_k)=g_{\alpha_{1k}d_{k}}(g_{\alpha_{2k}d_{k}^*}(w_{k}))
=\frac{\sqrt{1-\alpha_{1k}^{2}}\sqrt{1-|\alpha_{2k}|^{2}}}{(1+\alpha_{1k}\langle x_{k},d_{k}\rangle)(1+\overline{\alpha}_{2k}\langle w_{k},d_{k}^{*}\rangle)}\,w_{k}+A_{k}d_{k}+B_{k}d_{k}^{*}
\]
for some $A_{k},B_{k}\in\mathbb{C}$. From this we infer that the
sequence $(A_{k}d_{k}+B_{k}d_{k}^{*})$ norm converges to $y+\beta\xi$
for some $\beta\in\mathbb{C}$. Combining this norm convergence with the
given norm convergence of the sequence $(\alpha_{1k}d_{k}+\alpha_{2k}d_{k}^{*})$
to $\xi$
and, noting $y\notin\mathbb{C}\xi$, we conclude that the sequence
$(d_{k})$ is norm convergent to $d$.

Finally, consider the case $\|\xi\|<1$. Since $\xi\neq0$, we may
assume $z_{k}\neq0$ by omitting, if necessary, the first few terms
of the sequence. Let
\[
z_{k}'=\frac{z_{k}}{(1+1/2^{k})\|z_{k}\|}=\frac{\alpha_{1k}}{(1+1/2^{k})\|z_{k}\|}d_{k}+\frac{\alpha_{2k}}{(1+1/2^{k})\|z_{k}\|}d_{k}^{*}.
\]
Then $(z_{k}')$ is a sequence in $D$ norm converging to $\xi/\|\xi\|$.
By the previous case, one concludes with the norm convergence $d=\lim_{k}d_{k}$.
This proves the first assertion.

\end{proof}

\section{Horoballs in finite rank bounded symmetric domains}

In this section, we show that the invariant domains $H(\xi,\lambda)$ of a fixed-point free
compact holomorphic self-map $f$ on a finite rank bounded symmetric domain are horoballs $S_0(\xi, \lambda)$
resembling the horodiscs in Wolff's theorem for $\mathbb{D}$.

Throughout, let $D$ be a finite-rank bounded symmetric domain, realised as the open unit
 ball of a JB{*}-triple $V$  of rank $p$, with a decomposition
\[
V=V_{1}\oplus\cdots\oplus V_{q}
\]
into an $\ell^{\infty}$-sum of Cartan factors $V_{1}$, \ldots{},
$V_{q}$, which are mutually orthogonal, that is, $V_i \bo V_j = \{0\}$ for
distinct $i,j \in \{1, \ldots,q\}$. For each $z\in V$ in the sequel, we shall write the spectral
decomposition of $z$ in the following form
\begin{equation}
z=\alpha_{1}e_{1}+\cdots+\alpha_{p}e_{p}\qquad(\alpha_{1},\ldots,\alpha_{p}\geq0)\label{sp1}
\end{equation}
where two consecutive minimal tripotents are either in the same direct
summand, or belong to two consecutive summands, that is, there exist
$i\geq1$ and $1\leq r<\cdots<\ell$ such that
\begin{equation}
\{e_{1},\ldots,e_{i}\}\subset V_{1},~\{e_{i+1},\ldots,e_{i+r}\}\subset V_{2},\cdots,\{e_{i+\ell},\ldots,e_{p}\}\subset V_{q}.\label{sp2}
\end{equation}

Given a direct sum decomposition $z=z_{1}+\cdots+z_{q}\in V_1 \oplus\cdots\oplus V_q$, triple orthogonality
implies
\begin{equation}
\{e_{k},z,e_{k}\}=\{e_{k},z_{j},e_{k}\}\label{eq}
\end{equation}
if $e_{k}$ belongs to the summand $V_{j}$. Considering each element $z\in V$ having
$q$ summands, we note that norm and weak convergence of a sequence
in $V$ are the same as norm and weak convergence in each summand.
In fact, the weak and norm topologies of $V$ are product topologies
of those of $V_{1},\ldots,V_{q}$.

We sometimes use the symbol $x_k \wto x$ to denote
weak convergence of a sequence $(x_k)$.
In the Cartan factor $L(\mathbb{C}^r,H)$, we have $x_k \wto x$ if and only if
$\langle x_k(\mu), h \rangle \rightarrow \langle x(\mu),h\rangle$ as $k \rightarrow \infty$, for all $\mu \in \mathbb{C}^r$ and $h \in H$. The weak convergence $x_k \wto x$ implies $\{x_k,a,x_k\} \wto \{x,a,x\}$ for each
$a \in L(\mathbb{C}^r,H)$ since
$$\langle \{x_k,a,x_k\}(\mu), \,h\rangle = \langle x_k a^* x_k (\mu), \,h\rangle = \langle a^*x_k(\mu), x_k^*(h)\rangle \rightarrow \langle a^*x(\mu), x^*(h)\rangle$$ where weak and norm convergence are the same in $\mathbb{C}^r$.

\begin{lem}\label{fr} Let $V$ be a finite-rank JB{*}-triple without
a spin factor direct summand and $(z_{k})$ a sequence in $V$ norm converging
to some $\xi\in V$. Given a sequence $(e_{k})$ of minimal tripotents
in $V$ weakly converging to $e\in V$, we have the weak convergence
$\{e_{k},z_{k},e_{k}\}\wto\{e,\xi,e\}$ and also, $\{e,V,e\}\subset\mathbb{C}\,e$.
\end{lem}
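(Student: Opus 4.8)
The plan is to reduce everything to the building-block case of a single Type~I Cartan factor $L(\mathbb{C}^r,H)$, since $V$ is an $\ell^\infty$-sum of finitely many Cartan factors and, as noted in the paragraph preceding the lemma, both weak and norm convergence in $V$ are the product topologies of the summands; moreover the triple product acts summand-wise. The hypothesis excludes spin factors, and the remaining finite-rank Cartan factors are $L(\mathbb{C}^r,H)$ (with $H$ possibly infinite dimensional), Type~II and Type~III (which are subtriples of some $L(\mathbb{C}^m,\mathbb{C}^m)$), and the two exceptional finite-dimensional ones (where weak equals norm convergence and the claim is trivial by continuity of the triple product). So it suffices to prove both assertions in $L(\mathbb{C}^r,H)$, and then assemble.

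In $L(\mathbb{C}^r,H)$ the triple product is $\{a,b,c\}=\tfrac12(ab^*c+cb^*a)$, so $\{e_k,z_k,e_k\}=e_k z_k^* e_k$. For the weak convergence $\{e_k,z_k,e_k\}\wto\{e,\xi,e\}$, I would fix $\mu\in\mathbb{C}^r$ and $h\in H$ and write
\[
\langle e_k z_k^* e_k(\mu),\,h\rangle = \langle z_k^* e_k(\mu),\, e_k^*(h)\rangle = \langle e_k(\mu),\, z_k\,e_k^*(h)\rangle .
\]
Here the outer pairing is in $H$ (or $\mathbb{C}^r$, where weak equals norm). Since $e_k\wto e$, we get $e_k(\mu)\wto e(\mu)$ in $H$ and $e_k^*(h)\to e^*(h)$ in $\mathbb{C}^r$ in norm (finite dimensions, so weak = norm). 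Because $z_k\to\xi$ in operator norm and $(e_k^*(h))$ is norm convergent, $z_k e_k^*(h)\to \xi e^*(h)$ in $H$. Pairing a weakly convergent sequence against a norm-convergent sequence gives convergence of the inner products, so $\langle e_k z_k^* e_k(\mu),h\rangle \to \langle e(\mu),\xi e^*(h)\rangle = \langle e\xi^* e(\mu),h\rangle$, which is exactly $\langle\{e,\xi,e\}(\mu),h\rangle$. For the Type~II and Type~III factors one embeds them in $L(\mathbb{C}^m,\mathbb{C}^m)$; there $H$ is finite dimensional so the statement is immediate from joint norm/weak continuity, but one should note that $e$ need not lie in the subtriple, which is fine since the lemma only asserts weak convergence in $V$. (Actually since the $e_k$ lie in the closed subtriple, which is weakly closed being a closed subspace, $e$ does lie in it — worth a one-line remark.)

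For the second assertion $\{e,V,e\}\subset\mathbb{C}\,e$: since $e=\lim_k e_k$ weakly and each $e_k$ is a minimal tripotent, $\{e_k,V,e_k\}=\mathbb{C}\,e_k$. In $L(\mathbb{C}^r,H)$ a minimal tripotent is a rank-one operator $a_k\otimes b_k$ with $\|a_k\|=\|b_k\|=1$, and $\{a_k\otimes b_k, x, a_k\otimes b_k\} = (a_k\otimes b_k)x^*(a_k\otimes b_k) = \langle x^* b_k, ? \rangle$-type scalar times $a_k\otimes b_k$; concretely $(a_k\otimes b_k)x^*(a_k\otimes b_k)(\mu) = \langle \mu,a_k\rangle\, \langle x^* b_k, a_k\rangle\, b_k = \overline{\langle x a_k, b_k\rangle}\,\langle\mu,a_k\rangle b_k$, i.e. $\{e_k,x,e_k\} = \overline{\langle x a_k,b_k\rangle}\, e_k$. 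I then pass $x$ fixed: $\{e_k,x,e_k\}\wto\{e,x,e\}$ by the argument above, while simultaneously $\{e_k,x,e_k\}=\lambda_k e_k$ with $\lambda_k=\overline{\langle x a_k,b_k\rangle}$ a bounded scalar sequence ($|\lambda_k|\le\|x\|$). Passing to a subsequence with $\lambda_k\to\lambda$, and using $e_k\wto e$, we get $\lambda_k e_k\wto\lambda e$; hence $\{e,x,e\}=\lambda e\in\mathbb{C}\,e$. For the exceptional finite-dimensional factors this is just the statement that $\{e_k,V,e_k\}=\mathbb{C} e_k$ passes to the (norm) limit. Assembling over the $\ell^\infty$-summands: if $e=\sum_j e^{(j)}$ with $e^{(j)}\in V_j$, then $\{e,V,e\}=\bigoplus_j\{e^{(j)},V_j,e^{(j)}\}$ by orthogonality of the summands, and each term is $\subset\mathbb{C}e^{(j)}$; but in fact each $e_k$ lies in a single summand $V_{j(k)}$ (minimal tripotents of an $\ell^\infty$-sum live in one factor), so after passing to a subsequence all $e_k$ lie in a fixed $V_j$ and $e\in V_j$, giving $\{e,V,e\}\subset\mathbb{C}e$ directly.

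The main obstacle is keeping the bookkeeping of which pairings are "finite-dimensional hence norm = weak" straight — the point is that $\mathbb{C}^r$ is finite dimensional so $e_k^*(h)\to e^*(h)$ in norm, and it is exactly this norm convergence on the $\mathbb{C}^r$ side, combined with operator-norm convergence $z_k\to\xi$, that lets one pass the product $z_k e_k^*(h)$ to the limit in norm, after which one pairs against the merely-weakly-convergent $e_k(\mu)$. If $H$ were replaced by an infinite-dimensional space on both sides (which does not happen for finite-rank Cartan factors), the argument would break, so the finite-rank hypothesis is used essentially here, as is the exclusion of spin factors (whose triple product has the extra term $\langle x,z^*\rangle y^*$ that does not behave well under weak limits — indeed Lemma~\ref{d} is precisely the substitute needed in that case).
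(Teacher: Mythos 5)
Your proof is correct and follows essentially the same route as the paper's: reduction to the Type I factor $L(\mathbb{C}^r,H)$ (the other non-spin finite-rank Cartan factors being finite dimensional, hence trivial), the adjoint-pairing identity $\langle e_k z_k^*e_k(\mu),h\rangle=\langle e_k(\mu),z_k e_k^*(h)\rangle$ exploiting the finite dimensionality of $\mathbb{C}^r$, and, for the second assertion, minimality of the $e_k$ plus boundedness of the scalars $\lambda_k$ and a subsequence argument. The only cosmetic difference is that the paper first reduces to the case $\xi=0$ via the weak continuity of $x\mapsto\{x,a,x\}$ recorded just before the lemma, whereas you carry $\xi$ through the computation directly.
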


\begin{proof} Let $V$ be an $\ell^{\infty}$-sum $V_{1}\oplus\cdots\oplus V_{q}$
of mutually orthogonal Cartan factors. If the assertion is true for each $V_{j}$
($j=1,\ldots,q$), then it is also true for $V$ by (\ref{eq}) and
the subsequent remark there. Hence it suffices to prove the lemma
for a Type I Cartan factor $V=L(\mathbb{C}^{r},H)$.

The above remark allows us to assume $\xi=0$ and show $\{e_{k},z_{k},e_{k}\}\wto0$.
Indeed, we have
\begin{eqnarray*}
|\langle e_{k}z_{k}^{\ast}e_{k}(\mu),h\rangle| & = & |\langle z_{k}^{\ast}e_{k}(\mu),e_{k}^{*}h\rangle|\\
 & \leq & \|z_{k}^{\ast}e_{k}(\mu)\|_{\mathbb{C}^{r}}.\|e_{k}^{*}h\|_{\mathbb{C}^{r}}\\
 & \leq & \|z_{k}^{\ast}\|.\|e_{k}(\mu)\|_{H}.\|e_{k}^{*}\|.\|h\|_{H}\\
 & \leq & \|z_{k}\|.\|\mu\|_{\mathbb{C}^{r}}.\|h\|_{H}
  \to  0
\end{eqnarray*}
for all $\mu\in\mathbb{C}^{r}$ and $h\in H$.

For the second assertion, let $z\in V$. Then
$
\{e,z,e\}= \mbox{weak-}\lim_{k}\{e_{k},z,e_{k}\}= \mbox{weak-}\lim_{k}\lambda_{k}e_{k}
$
for some $\lambda_{k}\in\mathbb{C}$. It follows that the sequence
$|\lambda_{k}|=\|\lambda_{k}e_{k}\|$ is bounded and there is a subsequence
of $(\lambda_{k})$ converging to some $\lambda\in\mathbb{C}$. This
gives $\{e,z,e\}=\lambda e$. \end{proof}

\begin{lem}\label{fr'} Let $V$ be a JB{*}-triple of finite rank
$p$ and $(z_{k})$ a sequence in $D$ norm converging to some $\xi\in\overline{D}$,
with spectral decomposition
\[
z_{k}=\alpha_{1k}e_{1k}+\cdots+\alpha_{pk}e_{pk}
\]
where each sequence $(e_{ik})_{k}$ weakly converges to $e_{i}$ for
$i=1,\ldots,p$.
Then the sequence $(\{e_{ik},z_{k},e_{ik}\})_{k}$ weakly converges
to $\{e_i,\xi,e_i\}$ for each $i$. \end{lem}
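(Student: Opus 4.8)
The plan is to reduce the assertion to the case of a single Cartan factor, dispose of the non-spin factors by Lemma \ref{fr}, and treat the spin factors --- the only genuinely new point --- by means of Lemma \ref{d}.

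First I would use the $\ell^{\infty}$-decomposition $V=V_{1}\oplus\cdots\oplus V_{q}$ into mutually orthogonal Cartan factors, and write $\xi=\xi_{1}+\cdots+\xi_{q}$. Since norm and weak convergence in $V$ are coordinatewise, since $\{e_{ik},z_{k},e_{ik}\}=\{e_{ik},(z_{k})_{j},e_{ik}\}$ by (\ref{eq}) whenever $e_{ik}\in V_{j}$, and since each $V_{j}$ is weakly closed (so the weak limit $e_{i}$ lies in the same summand as the $e_{ik}$, giving $\{e_{i},\xi,e_{i}\}=\{e_{i},\xi_{j},e_{i}\}$), it suffices to prove the weak convergence inside a single Cartan factor. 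Because the spectral decomposition may distribute the tripotents $e_{1k},\ldots,e_{pk}$ among the summands differently for different $k$, I would first pass to a subsequence along which, for each $i$, the tripotent $e_{ik}$ lies in one fixed summand for all $k$; as the target $\{e_{i},\xi,e_{i}\}$ is unchanged, the usual subsequence characterisation of weak convergence then recovers the full sequence.

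If the Cartan factor in question is not a spin factor, the required weak convergence $\{e_{ik},z_{k},e_{ik}\}\wto\{e_{i},\xi,e_{i}\}$ is exactly Lemma \ref{fr}. Suppose instead it is a spin factor $V_{j}$, necessarily of rank $2$, so that exactly two of the spectral tripotents of $z_{k}$ lie in $V_{j}$; label the one carrying the larger coefficient $d_{k}$ and note the other lies in the one-dimensional space $V_{0}(d_{k})=\mathbb{C}d_{k}^{*}$, hence equals $\nu_{k}d_{k}^{*}$ with $|\nu_{k}|=1$, so $(z_{k})_{j}=\alpha_{1k}d_{k}+\alpha_{2k}d_{k}^{*}$ is of the form to which Lemma \ref{d} applies. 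If $\xi_{j}=0$ then $(z_{k})_{j}\to 0$ in norm and $\{e_{i},\xi,e_{i}\}=\{e_{i},\xi_{j},e_{i}\}=0$; by contractivity of the triple product $\|\{e_{ik},z_{k},e_{ik}\}\|=\|\{e_{ik},(z_{k})_{j},e_{ik}\}\|\le\|(z_{k})_{j}\|\to 0$, so $\{e_{ik},z_{k},e_{ik}\}\to\{e_{i},\xi,e_{i}\}$ in norm. If $\xi_{j}\neq 0$, Lemma \ref{d} shows that $(d_{k})$ and $(d_{k}^{*})$ converge in norm to minimal tripotents; the bounded scalars $\nu_{k}$ subconverge, and uniqueness of the weak limit $e_{i}$ then forces the second spectral tripotents $\nu_{k}d_{k}^{*}$ to converge in norm as well. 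Hence each $(e_{ik})_{k}$ converges in norm to $e_{i}$, and joint norm continuity of the triple product together with $(z_{k})_{j}\to\xi_{j}$ gives $\{e_{ik},z_{k},e_{ik}\}=\{e_{ik},(z_{k})_{j},e_{ik}\}\to\{e_{i},\xi_{j},e_{i}\}=\{e_{i},\xi,e_{i}\}$ in norm, a fortiori weakly.

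The main obstacle is the spin-factor case. There Lemma \ref{fr} is not available, and mere weak convergence of the minimal tripotents $e_{ik}$ is insufficient because the triple product is only separately weakly continuous; the crux is to upgrade it to norm convergence when $\xi_{j}\neq 0$, which is precisely the content of Lemma \ref{d}, once one observes that in a rank-$2$ spin factor the two spectral tripotents of $z_{k}$ are governed by a single minimal tripotent $d_{k}$ together with its involution $d_{k}^{*}$.
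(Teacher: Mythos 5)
Your proposal is correct and follows essentially the same route as the paper: reduce to a single Cartan factor via the $\ell^{\infty}$-decomposition and (\ref{eq}), invoke Lemma \ref{fr} for non-spin summands, and use the spin-factor spectral form $\alpha_{1k}d_{k}+\alpha_{2k}d_{k}^{*}$ together with Lemma \ref{d} for spin summands. The paper's own proof is just a terse three-sentence sketch of this, so your write-up merely supplies the details (the subsequence bookkeeping, the $\xi_{j}=0$ contractivity estimate, and the upgrade to norm convergence when $\xi_{j}\neq0$) that the paper leaves implicit.
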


\begin{proof} As noted before, $V$ is a finite $\ell^{\infty}$-sum
of mutually orthogonal Cartan factors and we need only consider convergence in each summand.
If $V$ does not have a spin factor summand, this has already been
proven in Lemma \ref{fr}. If $V$ contains a spin factor summand
with inner product $\langle\cdot,\cdot\rangle$, in which a spectral
decomposition of an element $z$ has the form
\[
z=\alpha_{1}d+\beta e=\alpha_{1}d+\beta\langle d,e^{*}\rangle d^{*}\qquad(0\leq\beta\leq\alpha_{1})
\]
(cf.\,\cite[(2.4)]{lie}) where the involution $*$ preserves weak convergence,
then one can use Lemma \ref{d} to conclude the proof. \end{proof}

\begin{lem}\label{eu*} Let $(e_{k})$ and $(u_{k})$ be two weakly
convergent sequences of minimal tripotents in $L(\mathbb{C}^{r},H)$,
with limits $e$ and $u$ respectively. If $e_{k}u_{k}^{\ast}=0$
from some $k$ onwards, then $eu^{\ast}=0$. \end{lem}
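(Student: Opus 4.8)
**Proof plan for Lemma \ref{eu*}.**

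The plan is to translate the hypothesis $e_k u_k^* = 0$ into a statement about the ranges of $e_k$ and $u_k$ as rank-one operators, and then pass to the weak limit. Write $e_k = a_k \otimes b_k$ and $u_k = c_k \otimes d_k$ with unit vectors $a_k, c_k \in \mathbb{C}^r$ and $b_k, d_k \in H$. A direct computation gives, for $\mu \in \mathbb{C}^r$, $(e_k u_k^*)(\mu) = e_k(\langle \mu, d_k\rangle c_k) = \langle \mu, d_k\rangle \langle c_k, a_k\rangle b_k$; hence $e_k u_k^* = 0$ is equivalent to $\langle c_k, a_k\rangle = 0$, i.e.\ $a_k \perp c_k$ in $\mathbb{C}^r$ (since $b_k \neq 0$, $d_k \neq 0$). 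Thus the hypothesis is simply that the ``column vectors'' $a_k$ and $c_k$ are orthogonal in the finite-dimensional Hilbert space $\mathbb{C}^r$.

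Next I would extract $a$ and $c$ from the weak limits. Since $e_k \wto e$, for every $\mu \in \mathbb{C}^r$ and $h \in H$ we have $\langle \mu, a_k\rangle \langle b_k, h\rangle \to \langle e(\mu), h\rangle$; writing $e = a \otimes b$ (if $e \neq 0$; the case $e = 0$ or $u = 0$ makes $eu^* = 0$ trivially), and similarly $u = c \otimes d$. The key point is that weak convergence of $e_k$ in $L(\mathbb{C}^r,H)$ forces norm convergence of the finite-dimensional components $a_k$: indeed $e_k^*$ has matrix columns governed by $a_k$, and since $\mathbb{C}^r$ is finite-dimensional, one can pass to a subsequence along which $a_k \to a'$ in norm and $b_k \wto b'$ in $H$, with $a' \otimes b' = e$; the same for $u_k$, giving $c_k \to c'$ in norm and $d_k \wto d'$ with $c' \otimes d' = u$. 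Along this subsequence, $\langle c_k, a_k\rangle = 0$ passes to the limit to give $\langle c', a'\rangle = 0$, hence $c' \perp a'$; by the range computation above this yields $e u^* = (a' \otimes b')(d' \otimes c') $ $= \langle \cdot, d'\rangle \langle c', a'\rangle b' = 0$. Since every subsequence has a further subsequence on which $e_k u_k^* \to e u^*$ and we have just shown this limit is $0$ regardless of the subsequential choices of $b', d'$, we conclude $e u^* = 0$.

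The main obstacle I anticipate is handling the degeneracy in the factorisation $e_k = a_k \otimes b_k$: the pair $(a_k, b_k)$ is only determined up to the circle action $(a_k, b_k) \mapsto (\zeta a_k, \bar\zeta b_k)$, so one cannot naively claim $a_k$ converges. The remedy is exactly the finite-dimensional compactness of the unit sphere in $\mathbb{C}^r$ used above: pass to a subsequence so that $a_k$ converges in norm after absorbing the phase, which is harmless because the quantity $\langle c_k, a_k\rangle$ whose vanishing we need is phase-equivariant in the right way. One must also be a little careful when $e$ or $u$ is zero (weak limits of minimal tripotents need not be tripotents, by the discussion preceding Lemma \ref{fr}), but in that case $eu^* = 0$ holds automatically, so the lemma is immediate. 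The rest is the routine range computation and taking limits, with no analytic subtlety since $\mathbb{C}^r$ is finite-dimensional.
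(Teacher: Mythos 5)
Your proof is correct and follows essentially the same route as the paper's: factor the minimal tripotents as $a_k\otimes b_k$ and $c_k\otimes d_k$, use compactness of the unit sphere of $\mathbb{C}^r$ to extract norm-convergent subsequences of $(a_k)$, $(c_k)$ and weak subsequential limits of $(b_k)$, $(d_k)$, identify $e=a'\otimes b'$ and $u=c'\otimes d'$, and pass the vanishing of $\langle c_k,a_k\rangle$ to the limit. The only cosmetic slip is that $e_ku_k^{\ast}$ acts on $H$, not on $\mathbb{C}^r$, so the variable in your range computation should be $h\in H$ rather than $\mu$; this does not affect the argument.
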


\begin{proof}

We have
\[
e_{k}=a_{k}\otimes b_{k}\quad{\rm {and}}\quad u_{k}=c_{k}\otimes d_{k}
\]
for some $a_{k},c_{k}\in\mathbb{C}^{r}$ and $b_{k},d_{k}\in H$,
of unit norm.

Pick two subsequences $(a_{j})$ and $(b_{j})$ of $(a_{k})$ and
$(b_{k})$ respectively such that $a\in\mathbb{C}^{r}$ is the norm
limit of $(a_{j})$ and $b\in H$ is the weak limit of $(b_{j})$.
Then we have the weak convergence $e=\text{weak-}\lim_{j}e_{j}=\text{weak-}\lim_{j}a_{j}\otimes b_{j}=a\otimes b$.
Likewise, $u=c\otimes d$ where $c$ is the norm limit of a subsequence $(c_{j'})$ of $(c_{j})$,
and $d$ the weak limit of a subsequence $(d_{j'})$ of $(d_{j})$.

Let $\mu\in\mathbb{C}^{r}$ and $h\in H$. Then for each $z\in L(\mathbb{C}^{r},H)$,
we have
\begin{eqnarray*}
\langle e_{}u^{\ast}z(\mu),h\rangle & = & \langle z(\mu),d\rangle\langle c,a\rangle\langle b,h\rangle\\
 & = & \lim_{j'}\langle z(\mu),d_{j'}\rangle\langle c_{j'},a_{j'}\rangle\langle b_{j'},h\rangle\\
 & = & \lim_{j'}\langle e_{j'}u_{j'}^{\ast}z(\mu),h\rangle
 =  0
\end{eqnarray*}
which implies $eu^*=0$.
\end{proof}

\begin{cor}\label{eu*e} Let $V$ be a finite-rank JB{*}-triple without
a spin factor direct summand. Let $(e_{k})$ and $(u_{k})$ be weakly
convergent sequences of minimal tripotents with limits $e$ and $u$
respectively such that $e_{k}$ and $u_{k}$ are orthogonal
for each $k$. Then $\{e,u,e\}=0$.

\end{cor}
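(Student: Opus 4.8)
**Proof proposal for Corollary \ref{eu*e}.**

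The plan is to reduce the statement about a general finite-rank JB*-triple without a spin summand to the single Type I Cartan factor case handled in Lemma \ref{eu*}, exactly as in the proofs of Lemma \ref{fr} and Lemma \ref{fr'}. Write $V = V_1 \oplus \cdots \oplus V_q$ as the $\ell^\infty$-sum of mutually orthogonal Cartan factors, none of which is a spin factor; since $V$ has finite rank, each $V_j$ is either a finite-dimensional Cartan factor of Type I--VI (excluding spin) or a Type I factor $L(\mathbb{C}^{r_j}, H_j)$ with $\dim H_j$ possibly infinite. I would first observe that weak (and norm) convergence in $V$ is the product of the weak (and norm) convergences in each summand, so $e_k \wto e$ means $P_{V_j}(e_k) \wto P_{V_j}(e)$ for each $j$, and similarly for $u_k$; and the triple product $\{e,u,e\}$ decomposes coordinatewise by orthogonality of the summands, so it suffices to prove $\{e,u,e\}_{V_j} = 0$ in each $V_j$ separately. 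The only subtle point here is that $e_k$ and $u_k$ are minimal tripotents of $V$, so $e_k$ lies entirely inside one summand $V_{s(k)}$ and $u_k$ inside one summand $V_{t(k)}$; passing to a subsequence, I may assume $s(k) = s$ and $t(k) = t$ are constant. If $s \neq t$ then $e_k \bo u_k = 0$ automatically and also $\{e,u,e\}=0$ by orthogonality of $V_s$ and $V_t$ in the limit, so nothing is to prove. Thus the essential case is $s = t$, i.e.\ all the $e_k$ and all the $u_k$ lie in a single Cartan factor $W := V_s$.

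Now $W$ is a finite-rank Cartan factor, not a spin factor. If $W$ is of Type I, say $W = L(\mathbb{C}^r, H)$, then the orthogonality $e_k \bo u_k = 0$ gives, via Lemma \ref{lem:Rank-one operators triple orth}, that $e_k u_k^* = 0$ (indeed $e_k u_k^* = e_k \bo u_k$ composed appropriately; concretely $2\{e_k,u_k,x\} = e_k u_k^* x + x u_k^* e_k$, and $e_k \bo u_k = 0$ forces both terms to vanish, in particular $e_k u_k^* = 0$ by taking $x$ ranging over $L(\mathbb{C}^r,H)$ and reading off, or directly from $a_k \perp c_k$). Then Lemma \ref{eu*} applies verbatim and yields $e u^* = 0$, whence $\{e,u,e\} = \tfrac12(e u^* e + e u^* e) = e u^* e = 0$. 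For the remaining types (finite-dimensional Type II, III, V, VI), the cleanest route is to recall that each of these embeds as a subtriple of some finite matrix space $L(\mathbb{C}^N, \mathbb{C}^N)$ (Type II and III are the antisymmetric and symmetric matrices; Types V and VI embed into $L(\mathbb{C}^{16},\mathbb{C}^{16})$ and into $3\times 3$ matrices over $\mathbb{C}$ after a suitable representation), and the triple product is the restriction of the ambient operator triple product; since these ambient spaces are finite-dimensional, weak and norm convergence coincide there, so $e = \lim_k e_k$ and $u = \lim_k u_k$ in norm, orthogonality passes to the limit by continuity of the triple product, and $\{e,u,e\} = 0$ follows immediately from $e \bo u = 0$. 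Alternatively, and more uniformly, one simply notes that in every \emph{finite-dimensional} Cartan factor the weak topology is the norm topology, so continuity of $\{\cdot,\cdot,\cdot\}$ already gives $e \bo u = 0$ and hence $\{e,u,e\}=0$ with no further work; the only genuinely infinite-dimensional case among non-spin finite-rank Cartan factors is $W = L(\mathbb{C}^r, H)$, which is precisely Lemma \ref{eu*}.

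The main obstacle, and the reason Lemma \ref{eu*} is needed as a separate step, is exactly the infinite-dimensional Type I factor $L(\mathbb{C}^r, H)$: there a weakly convergent sequence of minimal tripotents $e_k = a_k \otimes b_k$ need not converge in norm (the $b_k$ may only converge weakly in $H$), and the bilinear map $(x,y) \mapsto x y^*$ on $L(\mathbb{C}^r,H)$ is not weak--weak continuous, so one cannot simply pass orthogonality to the limit. The device that rescues the argument is the one already used in Lemma \ref{eu*}: split off the $\mathbb{C}^r$-components $a_k, c_k$, which \emph{do} converge in norm after passing to a subsequence (the unit ball of $\mathbb{C}^r$ being compact), and write $\langle e u^* z(\mu), h\rangle = \langle z(\mu), d\rangle \langle c, a\rangle \langle b, h\rangle$ as a genuine limit of $\langle e_{j'} u_{j'}^* z(\mu), h\rangle = 0$, using norm convergence in the $\mathbb{C}^r$ slots to absorb the inner products $\langle c_{j'}, a_{j'}\rangle$ and merely weak convergence in $H$ for the rest. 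So the whole content of the corollary is this reduction to Lemma \ref{eu*}, together with the bookkeeping that minimal tripotents live in a single Cartan summand and that in all other summands weak equals norm. I would therefore keep the proof short: reduce to one Cartan factor, dispose of the finite-dimensional factors by continuity of the triple product in the norm topology, and invoke Lemma \ref{eu*} together with Lemma \ref{lem:Rank-one operators triple orth} for $L(\mathbb{C}^r,H)$.
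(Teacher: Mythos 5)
Your proof is correct and follows essentially the same route as the paper's: reduce to the single Cartan factor $L(\mathbb{C}^r,H)$, use Lemma \ref{lem:Rank-one operators triple orth} to get $a_k\perp c_k$ and hence $e_ku_k^{\ast}=0$, then invoke Lemma \ref{eu*} to conclude $eu^{\ast}=0$ and so $\{e,u,e\}=eu^{\ast}e=0$. One side remark is inaccurate --- the exceptional factors of Types V and VI do not embed as subtriples of complex matrix spaces --- but this does not affect the argument, since your alternative observation (weak and norm topologies coincide in any finite-dimensional summand, so orthogonality passes to the limit by norm continuity of the triple product) covers all the non-Type-I cases.
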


\begin{proof}
We only need to verify the case where $V$ is the Cartan factor $L(\mathbb{C}^{r},H)$
for some Hilbert space $H$. We retain the notation from the previous
proof. As $(e_{k})$ and $(u_{k})$ are triple orthogonal, we have
$\langle c_{k},a_{k}\rangle=0$ by \textcolor{black}{Lemma} \ref{lem:Rank-one operators triple orth}
and therefore $e_{k}u_{k}^{\ast}=\langle\cdot,d_{k}\rangle\langle c_{k},a_{k}\rangle b_{k}=0$.
Hence Lemma \ref{eu*} finishes the proof.
\end{proof}

\begin{lem}\label{perp} Let $V$ be a JB{*}-triple of finite rank
$p$ and $(z_{k})$ a sequence in $D$ norm converging to some $\xi\in\overline{D}$,
with spectral decomposition
\[
z_{k}=\alpha_{1k}e_{1k}+\cdots+\alpha_{pk}e_{pk}\qquad(\alpha_{1k},\ldots,\alpha_{pk}\geq0)
\]
where each sequence $(e_{jk})_{k}$ weakly converges to $e_{j}$ and $(\alpha_{jk})_k$ converges
to $\alpha_j$, for
$j=1,\ldots,p$. Then we have $\alpha_{j}\{e_{i},e_{j},e_{i}\}=0$
for $i,j\in\{1,\ldots,p\}$ and $i\neq j$.\end{lem}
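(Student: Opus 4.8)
The plan is to reduce, as in Lemmas \ref{fr}--\ref{perp}, to a single Cartan factor summand, and then to treat separately the spin factor case and the Type I case $L(\mathbb{C}^r,H)$ (the remaining types being finite dimensional, where weak and norm convergence coincide and the conclusion is then routine from continuity of the triple product together with the fact that $\alpha_j e_j$ is triple orthogonal to $e_i$ for $i\neq j$, since each $z_k$ has triple orthogonal spectral components $\alpha_{jk}e_{jk}$). In the spin factor summand, a spectral decomposition has only two components and Lemma \ref{d} already records the relevant norm convergence, from which $\alpha_j\{e_i,e_j,e_i\}=0$ follows directly; so the substance is the Type I case with $\dim H=\infty$.

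So assume $V=L(\mathbb{C}^r,H)$ and fix $i\neq j$. First I would dispose of the case $\alpha_j=0$: here $\alpha_j\{e_i,e_j,e_i\}=0$ trivially. Assume then $\alpha_j>0$, so that $z_k\neq 0$ eventually and $\alpha_{jk}>0$ from some $k$ onwards. The key observation is that for each $k$ the tripotents $e_{ik}$ and $e_{jk}$ appearing in the spectral decomposition of $z_k$ are triple orthogonal, hence by Lemma \ref{lem:Rank-one operators triple orth}, writing $e_{ik}=a_{ik}\otimes b_{ik}$ and $e_{jk}=a_{jk}\otimes b_{jk}$ with unit vectors, we have $a_{ik}\perp a_{jk}$ and $b_{ik}\perp b_{jk}$ in $\mathbb{C}^r$ and $H$ respectively. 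In particular $e_{ik}e_{jk}^{*}=\langle\cdot\,,b_{jk}\rangle\langle a_{jk},a_{ik}\rangle b_{ik}=0$, so $e_{ik}$ and $e_{jk}$ satisfy the hypothesis of Lemma \ref{eu*} (equivalently, Corollary \ref{eu*e} applies directly to the pair $(e_{ik}),(e_{jk})$). Therefore $\{e_i,e_j,e_i\}=0$, and a fortiori $\alpha_j\{e_i,e_j,e_i\}=0$.

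The main point to get right is the passage to a single Type I summand: I must check that the spectral tripotents $e_{ik}$ and $e_{jk}$ of $z_k$ either both lie in the same Cartan factor summand or lie in different ones, and that in the latter case $\{e_i,e_j,e_i\}=0$ automatically because the summands are mutually orthogonal (here I use that weak limits of elements of a summand stay in that summand, since the weak topology of $V$ is the product topology of those of the $V_m$, as noted in the text). When $e_{ik},e_{jk}$ lie in the same summand $V_m=L(\mathbb{C}^r,H)$, the argument of the previous paragraph applies verbatim within $V_m$. One small subtlety: Corollary \ref{eu*e} is stated for sequences of minimal tripotents that are orthogonal \emph{for each $k$}; this is exactly what we have, since orthogonality of $\alpha_{ik}e_{ik}$ and $\alpha_{jk}e_{jk}$ in the spectral decomposition forces orthogonality of the tripotents $e_{ik},e_{jk}$ whenever $\alpha_{ik},\alpha_{jk}>0$, and when one of these coefficients vanishes there is nothing to prove since $\alpha_j\{e_i,e_j,e_i\}$ carries the factor $\alpha_j$ (for the remaining case where $\alpha_j>0$ but $\alpha_i=0$, one replaces $e_{ik}$ by any minimal tripotent orthogonal to $e_{jk}$ and weakly convergent — or simply notes $\{e_i,e_j,e_i\}$ is unaffected — but cleanest is to observe that in the spectral decomposition all the $e_{jk}$ with $\alpha_{jk}=0$ may be chosen mutually orthogonal to the others, so Corollary \ref{eu*e} still applies). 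Modulo this bookkeeping, the proof is short.
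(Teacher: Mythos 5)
Your proposal is correct and follows essentially the same route as the paper: the authors likewise reduce to the Cartan factor summands (cross-summand pairs being trivially orthogonal), invoke Lemma \ref{d} for a spin factor summand and Corollary \ref{eu*e} for the Type I case. The only difference is that your bookkeeping about vanishing coefficients is not needed, since the spectral decomposition already supplies a family $\{e_{1k},\dots,e_{pk}\}$ of \emph{mutually orthogonal} minimal tripotents for every $k$, so Corollary \ref{eu*e} applies without adjustment.
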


\begin{proof} If $e_{i}$ and $e_{j}$ do not belong to the same
direct summand then $\{e_{i},e_{j},e_{i}\}=0$. If they do, then the
result follows from Lemma \ref{d} and Corollary \ref{eu*e}, by considering
each summand in the decomposition of $V$ into Cartan factors.
\end{proof}

\begin{lem}\label{min}

Let $V$ be a finite-rank JB{*}-triple without a spin factor direct
summand. Then a weakly convergent sequence $(e_{k})$ of minimal tripotents
in $V$ with a minimal tripotent limit is norm convergent.

\end{lem}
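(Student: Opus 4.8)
The plan is to reduce to a single Cartan factor and then exploit the rank-one structure of $L(\mathbb{C}^r,H)$. Since $V$ is a finite $\ell^\infty$-sum of mutually orthogonal Cartan factors and weak (resp. norm) convergence in $V$ is coordinatewise, a minimal tripotent of $V$ lies entirely in one summand; passing to a subsequence we may assume all $e_k$ and the limit $e$ lie in a fixed Type I Cartan factor $L(\mathbb{C}^r,H)$ (the spin factors being excluded by hypothesis, and the exceptional factors being finite dimensional, where weak and norm convergence coincide). Write $e_k=a_k\otimes b_k$ with $a_k\in\mathbb{C}^r$, $b_k\in H$ of unit norm, as in the proof of Lemma \ref{eu*}. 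Because $\mathbb{C}^r$ is finite dimensional, after a further subsequence $a_k\to a$ in norm; and $b_k\to b$ weakly for some $b\in H$ with $\|b\|\le 1$, so that $e=\mathrm{weak\text{-}}\lim_k a_k\otimes b_k=a\otimes b$. By hypothesis $e$ is a minimal tripotent, hence $\|e\|=1$, which forces $\|a\|=1$ and $\|b\|=1$.

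The key step is to upgrade the weak convergence $b_k\to b$ to norm convergence, and for this the point is that $\|b_k\|=1=\|b\|$ together with weak convergence in a Hilbert space already gives $\|b_k-b\|^2=\|b_k\|^2-2\,\mathrm{Re}\,\langle b_k,b\rangle+\|b\|^2=2-2\,\mathrm{Re}\,\langle b_k,b\rangle\to 0$, since $\langle b_k,b\rangle\to\langle b,b\rangle=1$. Thus $b_k\to b$ in norm, and then $e_k=a_k\otimes b_k\to a\otimes b=e$ in the operator norm of $L(\mathbb{C}^r,H)$, because $\|a_k\otimes b_k-a\otimes b\|\le\|a_k-a\|\,\|b_k\|+\|a\|\,\|b_k-b\|\to 0$. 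Since every subsequence of the original $(e_k)$ has a further subsequence converging in norm to the same limit $e$ (the limit is forced to be $e$ because norm convergence implies weak convergence and weak limits are unique), the whole sequence $(e_k)$ converges to $e$ in norm.

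The main obstacle is not analytic difficulty but making sure the reduction to one Type I Cartan factor is clean: one must observe that a nonzero tripotent of an $\ell^\infty$-sum of orthogonal Cartan factors, being minimal, cannot have nonzero components in two different summands (two nonzero orthogonal tripotents would contradict minimality, since $\{e,V,e\}=\mathbb{C}e$ fails once $e$ splits), so $e_k$ and $e$ sit in a single factor $V_j$; passing to a subsequence we may take $j$ fixed. The cases where $V_j$ is a finite dimensional exceptional Cartan factor are trivial, and the spin factors are excluded by hypothesis, so only $V_j=L(\mathbb{C}^r,H)$ with $\dim H$ possibly infinite requires the Hilbert-space argument above. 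This completes the proof. \bo
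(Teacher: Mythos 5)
Your proposal is correct and follows essentially the same route as the paper: reduce to a single Type I factor $L(\mathbb{C}^r,H)$ (the non-spin infinite-dimensional case), write $e_k=a_k\otimes b_k$, extract a norm-convergent subsequence of $(a_k)$ and a weakly convergent one of $(b_k)$, and use the Radon--Riesz property in $H$ (weak convergence plus convergence of norms implies norm convergence) together with the subsequence principle. The only cosmetic difference is that the paper shows the nonzero coordinate of $e_k$ eventually sits in the summand containing the limit, while you handle this by passing to subsequences; both hinge on a minimal tripotent not splitting across orthogonal summands.
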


\begin{proof} We only need to consider the case where $V$ is a finite
$\ell^{\infty}$-sum $V_{1}\oplus\cdots\oplus V_{q}$ of Type I Cartan
factors. Let $c=(c_{1},\ldots,c_{q})\in V$ be the weak limit of the
sequence $(e_{k})$ and write $e_{k}=(e_{1k},\ldots,e_{qk})$ which
has only one nonzero coordinate. Since $c$ is a minimal tripotent,
there exists $j\in\{1,\ldots,q\}$ such that $c_{j}$ is a minimal
tripotent in $V_{j}$ and $c_{i}=0$ for all $i\neq j$. By coordinatewise
weak convergence, there exists $K$ such that $k\geq K$ implies $e_{ik}=0$
for $i\neq j$, and $e_{jk}$ is a minimal tripotent in $V_{j}$.

Let $V_{j}=L(\mathbb{C}^{r},H)$ for some Hilbert space $H$ in which
case, we have
\[
e_{jk}=a_{k}\otimes b_{k}\quad (a_{k}\in\mathbb{C}^{r},~b_{k}\in H)
\]
and $\|a_{k}\|=\|b_{k}\|=1$.

To complete the proof, we show that every subsequence of $(e_{jk})$
has a subsequence norm converging to $c_{j}$. Let $(a_{k'}\otimes b_{k'})$
be a subsequence of $(e_{jk})$. We can find a subsequence $(a_{k''}\otimes b_{k''})$
of $(a_{k'}\otimes b_{k'})$ which weakly converges to $a''\otimes b''$
with $a''=\lim_{k''}a_{k''}$ and $b''=\text{weak-}\lim_{k''}b_{k''}$.
It follows that $c_{j}=a''\otimes b''$, which implies
$\|b''\|=1$ and $(b_{k''})$ actually norm converges to $b''$ in the Hilbert space $H$. A
simple calculation then shows that $(a_{k''}\otimes b_{k''})$ norm
converges to $c_{j}$.
\end{proof}

Now let $(z_{k})$
be a sequence in $D$ norm converging to some $\xi\in\partial D$.
As in (\ref{sp1}) and (\ref{sp2}), choose a spectral decomposition
of each $z_{k}$:
\begin{eqnarray}\label{sp}
z_{k} & = & \alpha_{1k}e_{1k}+\dots+\alpha_{pk}e_{pk},
\end{eqnarray}
where $\alpha_{1k},\ldots,\alpha_{pk}\geq0$ and $\{e_{1k},\dots,e_{pk}\}$
is a family of mutually orthogonal minimal tripotents in $\partial D$.
By weak compactness of  $\overline{D}$,
each sequence $(e_{ik})_{k}$ has a weak limit point $e_{i}$ say, for $i= 1,\ldots, p$.
Replace $(z_k)$ by a subsequence if necessary, we may assume henceforth  that $(\alpha_{ik})$
converges to $\alpha_{i}$,  and that  $(e_{ik})$  weakly converges to $e_{i}$
for $i=1,\dots,p$. It follows that
\[
\xi=\alpha_{1}e_{1}+\cdots+\alpha_{p}e_{p}.
\]

\begin{lem} \label{free-conv} Let $V$ be a JB{*}-triple of finite rank
$p$ and $(z_{k})$ be the sequence norm converging to $\xi$ as defined
above. Then there exists a non-empty set $J\subset\{1,\ldots,p\}$
such that \global\long\def\labelenumi{(\roman{enumi})}

\begin{enumerate}
\item[\rm(i)] $\alpha_{i}\neq0$ and  $(e_{ik})_{k}$ norm converges to
$e_{i}$, for each $i\in J$;
\item[\rm(ii)]  $\{e_{i}:i\in J\}$ is a family of pairwise
orthogonal minimal tripotents;
\item[\rm (iii)] $\alpha_{i}=0$ and $(\alpha_{ik}e_{ik})$ norm converges to $0$,
for each $i\in\{1,\ldots,p\}\backslash J$.
\end{enumerate}
\end{lem}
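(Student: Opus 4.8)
The plan is to extract the set $J$ from the spectral data of the $z_k$ by separating those spectral coordinates whose limiting weight $\alpha_i$ is strictly positive (the "surviving" directions) from those whose weight collapses to zero, and then to show that along these surviving directions the weakly convergent tripotent sequences are in fact norm convergent to mutually orthogonal \emph{minimal} tripotents. The natural candidate is
\[
J=\{\, i\in\{1,\ldots,p\} : \alpha_i\neq 0 \,\},
\]
which is non-empty because $\xi=\alpha_1 e_1+\cdots+\alpha_p e_p\in\partial D$ has $\|\xi\|=1$, so at least one $\alpha_i>0$ (indeed $\alpha_1=\lim_k\|z_k\|=1$). With this choice, (iii) is almost immediate: if $i\notin J$ then $\alpha_i=0$, hence $\|\alpha_{ik}e_{ik}\|=\alpha_{ik}\to\alpha_i=0$, so $(\alpha_{ik}e_{ik})$ norm converges to $0$. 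The content of the lemma is therefore concentrated in (i) and (ii).

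For (i), fix $i\in J$, so $\alpha_i>0$. The idea is to invoke Lemma \ref{d} (for spin-factor summands) and Lemma \ref{min} (for Type I summands) to upgrade weak convergence of $(e_{ik})$ to norm convergence. I would argue summand by summand using the decomposition $V=V_1\oplus\cdots\oplus V_q$ into Cartan factors, so it suffices to treat the case where all the relevant tripotents lie in a single Cartan factor. If that factor is a spin factor, then $z_k$ restricted to it has the two-term form $\alpha_{1k}d_k+\alpha_{2k}d_k^{*}$ appearing in Lemma \ref{d}; since the corresponding limiting weight is nonzero (this is where $i\in J$ is used — the projection of $\xi$ onto this summand is nonzero), Lemma \ref{d} gives norm convergence of $(d_k)$ and $(d_k^{*})$ to minimal tripotents. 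If the factor is of Type I (no spin summand), then Lemma \ref{min} applies provided we already know the weak limit $e_i$ is a minimal tripotent — so the minimality of $e_i$ must be established first, or in tandem. For minimality: by Lemma \ref{fr} (applied to the relevant non-spin summand, or Lemma \ref{fr'} in general) we have $\{e_i, V, e_i\}\subset\mathbb C\, e_i$ when there is no spin summand, which is exactly the defining property of a minimal tripotent once we check $e_i\neq 0$; and $e_i\neq 0$ follows because $\alpha_i e_i$ is (a component of) $\xi$ and $\alpha_i\neq 0$. The spin-factor case gives minimality of the limit directly from Lemma \ref{d}. Then Lemma \ref{min} closes the loop in the Type I case.

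For (ii), I would combine Lemma \ref{perp} with the minimality just obtained. Lemma \ref{perp} gives $\alpha_j\{e_i,e_j,e_i\}=0$ for all $i\neq j$; for $i,j\in J$ we have $\alpha_j\neq 0$, hence $\{e_i,e_j,e_i\}=0$. Since $e_i$ and $e_j$ are minimal tripotents, the vanishing of $\{e_i,e_j,e_i\}$ forces $e_i\bo e_j=0$, i.e.\ triple orthogonality of $e_i$ and $e_j$; one way to see this is that for minimal $e_i$, $\{e_i,e_j,e_i\}$ is the scalar $\langle e_j, \text{(something)}\rangle e_i$ detecting the $V_2(e_i)$-component of $e_j$, and $\{e_i,e_j,e_i\}=0$ together with minimality propagates to $e_i\bo e_j=0$ via the Peirce calculus of Section 3 (alternatively, pass again to the two Cartan-factor models: in $L(\mathbb C^r,H)$ use Lemma \ref{lem:Rank-one operators triple orth} together with Lemma \ref{eu*e}/Corollary \ref{eu*e}, and in a spin factor use the orthogonality relations from Lemma \ref{d}). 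The \textbf{main obstacle} I anticipate is the bookkeeping needed to pass cleanly between the global finite-rank $V$ and its individual Cartan-factor summands while tracking which minimal tripotents $e_{ik}$ sit in which summand — in particular, ensuring that the "surviving" indices $i\in J$ are compatible with the block structure (\ref{sp2}) and that weak limits do not leak mass across summands — and, in the Type I case, sequencing the argument so that minimality of the limit $e_i$ is in hand before Lemma \ref{min} is applied. Once the summand reduction is set up carefully, each piece is a direct citation of Lemmas \ref{d}, \ref{fr}, \ref{min}, \ref{perp} and Corollary \ref{eu*e}.
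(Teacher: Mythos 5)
Your overall strategy (reduce to Cartan factor summands and feed the pieces into Lemmas \ref{d}, \ref{fr}, \ref{min}, \ref{perp}) is the right one, but the assembly has two genuine gaps. First, your choice $J=\{i:\alpha_i\neq0\}$ obliges you to prove that $\alpha_i\neq0$ forces $e_i\neq0$, and the justification you give (``$\alpha_ie_i$ is a component of $\xi$'') is a non sequitur: $\xi=\sum_j\alpha_je_j$ is nonzero as soon as \emph{one} term survives, and a weak limit of minimal tripotents can perfectly well be $0$ (take $a\otimes b_k$ in $L(\mathbb{C}^r,H)$ with $(b_k)$ orthonormal). The paper avoids this trap by setting $J=\{i:\alpha_ie_i\neq0\}$ and only \emph{afterwards} deducing $\alpha_j=0$ for $j\notin J$, from $\alpha_{jk}\leq\|\sum_{i\notin J}\alpha_{ik}e_{ik}\|=\|z_k-\sum_{i\in J}\alpha_{ik}e_{ik}\|\to\|\xi-\sum_{i\in J}\alpha_ie_i\|=0$, which uses the norm convergence already secured on $J$ and the orthogonality of the remaining spectral tripotents. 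Relatedly, your route to minimality is incomplete: Lemma \ref{fr} gives $\{e_i,V,e_i\}\subset\mathbb{C}\,e_i$, but this together with $e_i\neq0$ does \emph{not} make $e_i$ a tripotent --- the weak limit can be $a\otimes b$ with $0<\|b\|<1$, which satisfies $\{e,V,e\}\subset\mathbb{C}e$ without $\{e,e,e\}=e$. The step your proposal omits is the heart of the paper's proof: compute the weak limit of $\{e_{ik},z_k,e_{ik}\}=\alpha_{ik}e_{ik}$ in two ways, obtaining $\alpha_ie_i=\{e_i,\xi,e_i\}=\alpha_i\{e_i,e_i,e_i\}$ from Lemma \ref{fr'} and Lemma \ref{perp}; only this identity forces $e_i$ to be a (nonzero, hence by Lemmas \ref{fr} and \ref{d} minimal) tripotent when $\alpha_ie_i\neq0$, after which Lemmas \ref{min} and \ref{d} upgrade to norm convergence as you describe.

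Second, your argument for (ii) rests on the implication ``$\{e_i,e_j,e_i\}=0$ for minimal tripotents implies $e_i\bo e_j=0$,'' which is false: in $M_2(\mathbb{C})$ the minimal tripotents $e_{11}$ and $e_{12}$ satisfy $\{e_{11},e_{12},e_{11}\}=e_{11}e_{12}^{*}e_{11}=0$ while $\{e_{11},e_{12},e_{12}\}=\frac12e_{11}\neq0$, so $e_{11}\bo e_{12}\neq0$. (For minimal $e$, $\{e,u,e\}=0$ only says $P_2(e)u=0$; it says nothing about $P_1(e)u$.) Once norm convergence in (i) is in hand, the correct argument is the paper's much shorter one: $e_{ik}\bo e_{jk}=0$ for every $k$, and the box operator is jointly norm continuous (since $\|x\bo y\|\leq\|x\|\|y\|$), so $e_i\bo e_j=0$ in the limit. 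Your treatment of (iii) is fine, but only after the two points above are repaired.
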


\begin{proof} For $i\in\{1,\ldots,p\}$, we have
\begin{eqnarray*}
\{e_{ik},z_{k},e_{ik}\}=\sum_{j=1}^{p}\alpha_{jk}\{e_{ik},e_{jk},e_{ik}\}=\alpha_{ik}e_{ik}
\end{eqnarray*}
which converges weakly to both $\alpha_{i}e_{i}$ and, by Lemma \ref{fr'},
to
\begin{eqnarray*}
\{e_{i},\xi,e_{i}\} & = & \sum_{j=1}^{p}\alpha_{j}\{e_{i},e_{j},e_{i}\}=\alpha_{i}\{e_{i},e_{i},e_{i}\}+\sum_{j\neq i}\alpha_{j}\{e_{i},e_{j},e_{i}\}.
\end{eqnarray*}
This, together with Lemma \ref{perp}, gives
\[
\alpha_{i}e_{i}=\alpha_{i}\{e_{i},e_{i},e_{i}\}+\sum_{j\neq i}\alpha_{j}\{e_{i},e_{j},e_{i}\}=\alpha_{i}\{e_{i},e_{i},e_{i}\}.
\]
Therefore either $\alpha_{i}=0$ or $e_{i}$ is a tripotent. If $\alpha_{i}e_{i}\neq0$,
then Lemma \ref{d} and Lemma \ref{fr} imply that $e_{i}$ is a minimal
tripotent and also, it follows from Lemma \ref{d} and Lemma \ref{min}
that the sequence $(e_{ik})$ actually norm converges to $e_{i}$.
Let
\begin{eqnarray*}
J=\{i\in\{1,\dots,p\}\medspace:\medspace\alpha_{i}e_{i}\neq0\}.
\end{eqnarray*}
Then $J\neq\emptyset$ as $\sum_{i=1}^{p}\alpha_{i}e_{i}=\xi\neq0$.

For each $i\in J$, we have $\alpha_{i}\neq 0$ and by norm convergence of $(e_{ik})$, the
minimal tripotents $\{e_i : i\in J\}$ are pairwise orthogonal.
This proves (i) and (ii).

To show (iii), let $j\in\{1,\ldots,p\}\backslash J$. Then we have
the norm convergence
\begin{eqnarray*}
\alpha_{jk}=\|\alpha_{jk}e_{jk}\| & \leq & \max\{\|\alpha_{ik}e_{ik}\|: i\in\{1,\ldots,p\}\backslash J\} = \|\sum_{i\in\{1,\ldots,p\}\backslash J}\alpha_{ik}e_{ik}\|\\
 & = & \|z_{k}-\sum_{i\in J}\alpha_{ik}e_{ik}\|~\longrightarrow~\|\xi-\sum_{i\in J}\alpha_{i}e_{i}\|=0\\
\end{eqnarray*}
which gives $\alpha_{j}=0$. The second assertion in (iii) follows from $\|e_{ik}\|=1$. \end{proof}

\begin{rem} \label{rem:Convergence of Peirce projections-1} Let $V$
be a finite-rank  JB{*}-triple and let $(z_{k})$, $\xi$ and
$J$ be as defined in Lemma \ref{free-conv}. The norm convergence
of the sequence $(e_{ik})_{k}$ to $e_{i}$ for all $i\in J$ and the
pairwise orthogonality of the minimal tripotents $\{e_{i}:i\in J\}$  ensure the norm
convergence
$$ \lim_{k\rightarrow \infty} P_{ij}(e_{sk}\medspace:\medspace s\in J)=
P_{ij}(e_{s}\medspace:\medspace s\in J)$$
of a sequence of joint Peirce projections, for
$i,j\in\{0\}\cup J$. Moreover, if $(w_{k})$ is a sequence in $\overline{D}$
weakly converging to $w\in\overline{D}$,
then we also have $P_{ij}(e_{sk}\medspace:\medspace s\in J)(w_{k})\wto P_{ij}(e_{s}\medspace:\medspace s\in J)(w)$
for $i,j\in\{0\}\cup J$.\end{rem}

Let $P_{jj'}^{k}$ denote the joint Peirce projections $P_{jj'}(e_{1k},\dots,e_{pk})$
for $0\leq j\leq j'\leq p$. By (\ref{pijek}) and \eqref{eq:Bergmann Neg Sq Rt},
we have
\begin{eqnarray*}
(1-r_{k}^{2})B(r_{k}z_{k},r_{k}z_{k})^{-1/2}(z_{k}) & = & (1-r_{k}^{2})\sum_{0\leq j\leq j'\leq p}(1-r_{k}^{2}\alpha_{jk}^{2})^{-1/2}(1-r_{k}^{2}\alpha_{j'k}^{2})^{-1/2}P_{jj'}^{k}(z_{k})\\
 & = & \sum_{j=1}^{p}\frac{1-r_{k}^{2}}{1-r_{k}^{2}\alpha_{jk}^{2}}\alpha_{jk}e_{jk}
\end{eqnarray*}
where $\alpha_{0k}$ is defined to be $0$ for all $k$. We can write
\begin{eqnarray}
\frac{1-r_{k}^{2}}{1-r_{k}^{2}\alpha_{jk}^{2}}  \, = \, \frac{\frac{1-r_{k}^{2}}{1-\|z_{k}\|^{2}}}
{\frac{1-r_{k}^{2}}{1-\|z_{k}\|^{2}}+\left(\frac{1-\alpha_{jk}{}^{2}}{1-\|z_{k}\|^{2}}\right)r_{k}^{2}}
 \, = \, \frac{\left(\frac{1-\|z_{k}\|^{2}}{1-\alpha_{jk}{}^{2}}\right)\lambda}
  {\left(\frac{1-\|z_{k}\|^{2}}{1-\alpha_{jk}{}^{2}}\right)\lambda+r_{k}^{2}}\label{eq:coeff limit sum-1}
\end{eqnarray}
where $\lambda(1-\|z_k\|^2) = 1-r_k^2$ and $\frac{1-\|z_{k}\|^{2}}{1-\alpha_{jk}{}^{2}}\in(0,1]$. Let
\begin{equation}
\sigma_{j}:=\limsup_{k}\frac{1-\|z_{k}\|^{2}}{1-\alpha_{jk}{}^{2}}\in[0,1]
\qquad (j=1, \ldots,p).\label{eq:sigma def-1}
\end{equation}
The sequence in (\ref{eq:coeff limit sum-1}) admits a convergent subsequence
\begin{equation}
\frac{1-r_{k'}^{2}}{1-r_{k'}^{2}\alpha_{jk'}^{2}} \longrightarrow \, \frac{\sigma_{j}\lambda}{\sigma_{j}\lambda+1} \quad {\rm as} \quad k' \rightarrow \infty.
\end{equation}
Noting that $\alpha_j \in [0,1]$ where  $\alpha_{j}<1$ implies $\sigma_{j}=0$, and $\alpha_{j}=0$
for $j\in\{1,\ldots,p\}\backslash J$, we have the following norm
convergence:
\begin{equation}
(1-r_{k'}^{2})B(r_{k'}z_{k'},r_{k'}z_{k'})^{-1/2}(z_{k'})\longrightarrow
\sum_{j=1}^{p}\frac{\sigma_{j}\lambda}{\sigma_{j}\lambda+1}\alpha_je_{j}=\sum_{j\in J}\frac{\sigma_{j}\lambda}{\sigma_{j}\lambda+1}e_{j}.\label{eq:c_k(lambda) limit-1}
\end{equation}

\begin{rem} \label{rem: norm frequently-1}Since $\|z_{k}\|=\max \{\alpha_{jk}: j=1, \ldots, p\}$,
there are two possibilities for each $j\in\{1,\dots,p\}$, namely,
either $\alpha_{jk}<\|z_{k}\|$ from some $k$ onwards or there is a subsequence $(\alpha_{jk'})_{k'}$
of $(\alpha_{jk})_{k}$ with $\alpha_{jk'}=\|z_{k'}\|$. As $\{1,\dots,p\}$
is finite, there exists some $j_{0}$ which satisfies the latter
in which case $\sigma_{j_{0}}=1$ and $\alpha_{j_0} =1$.\end{rem}

We are now ready to give an explicit description of the closed horoball $S(\xi,\lambda)$ in terms
of a Bergmann operator.

\begin{thm} \label{thm:explicit description of horosphere} Let $D$ be a
bounded symmetric domain of finite rank $p$ and
$f:D\to D$ a fixed-point free compact holomorphic map. Then there
exist a non-empty set $J\subset\{1,\ldots,p\}$ and a sequence $(z_{k})$
in $D$ converging to a boundary point
\[
\xi=\sum_{j\in J}\alpha_{j}e_{j}\qquad(0 <\alpha_{j} \leq 1)
\]
where $\{e_{j}:j\in J\}$ consists of mutually orthogonal minimal
tripotents in $\partial D$, such that for each $\lambda>0$,
\begin{eqnarray*}
S(\xi,\lambda) & = & \sum_{j\in J}\frac{\sigma_{j}\lambda}{1+\sigma_{j}\lambda}e_{j}+B\left(\sum_{j\in J}\sqrt{\frac{\sigma_{j}\lambda}{1+\sigma_{j}\lambda}}\,e_{j}\,,~\sum_{j\in J}\sqrt{\frac{\sigma_{j}\lambda}{1+\sigma_{j}\lambda}}\,e_{j}\right)^{1/2}(\overline{D})
\end{eqnarray*}
with $\sigma_{j}\geq 0$ and $\max\{\sigma_{j}:j\in J\}=1$.\end{thm}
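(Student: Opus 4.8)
The plan is to read the closed horoball $S(\xi,\lambda)$ off the affine description of the Kobayashi balls in (\ref{kball1}): writing $c_{k}=(1-r_{k}^{2})B(r_{k}z_{k},r_{k}z_{k})^{-1/2}(z_{k})$ and $T_{k}=r_{k}B(z_{k},z_{k})^{1/2}B(r_{k}z_{k},r_{k}z_{k})^{-1/2}$, one has $D_{k}(\lambda)=c_{k}+T_{k}(D)$. The sequence $(z_{k})$, the index set $J$, and the spectral data $\xi=\sum_{j\in J}\alpha_{j}e_{j}$ with $\{e_{j}:j\in J\}$ mutually orthogonal minimal tripotents and $0<\alpha_{j}\le 1$, are furnished by Theorem \ref{g} together with Lemma \ref{free-conv}. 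Passing to a subsequence of $(z_{k})$ (still converging to $\xi$, and chosen independently of $\lambda$) I would assume, as in the discussion preceding (\ref{eq:c_k(lambda) limit-1}), that $\frac{1-\|z_{k}\|^{2}}{1-\alpha_{jk}^{2}}\to\sigma_{j}\in[0,1]$ for every $j$, where $\sigma_{j}=0$ whenever $\alpha_{j}<1$ and $\sigma_{1}=1$ because $\alpha_{1k}=\|z_{k}\|$; hence $\max\{\sigma_{j}:j\in J\}=1$ (this is Remark \ref{rem: norm frequently-1}, the index $j_{0}$ there having $\alpha_{j_{0}}=1$, so $j_{0}\in J$). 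Fixing $\lambda>0$ and setting $t_{j}=\sqrt{\sigma_{j}\lambda/(1+\sigma_{j}\lambda)}$ ($j\in J$), $w=\sum_{j\in J}t_{j}e_{j}$, $c(\lambda)=\sum_{j\in J}t_{j}^{2}e_{j}$, both $w$ and $c(\lambda)$ lie in $D$ since by orthogonality of the $e_{j}$ their norms are $\max_{j}t_{j}<1$ and $\max_{j}t_{j}^{2}<1$. The two facts to establish are: (a) $c_{k}\to c(\lambda)$ in norm (this is the content of (\ref{eq:c_k(lambda) limit-1})); and (b) $T_{k}\to B(w,w)^{1/2}$ in \emph{operator} norm.

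Assuming (a) and (b), the set equality drops out. For ``$\supseteq$'': for $x\in\overline{D}$ the points $x_{k}:=c_{k}+T_{k}((1-1/k)x)$ belong to $D_{k}(\lambda)=c_{k}+T_{k}(D)$ and converge in norm to $c(\lambda)+B(w,w)^{1/2}(x)$, whence $c(\lambda)+B(w,w)^{1/2}(\overline{D})\subseteq S(\xi,\lambda)$. For ``$\subseteq$'': if $x=\lim_{k}x_{k}$ with $x_{k}=c_{k}+T_{k}(y_{k})$, $y_{k}\in D$, then by reflexivity of the finite-rank JB*-triple $V$ a subsequence of $(y_{k})$ converges weakly to some $y\in\overline{D}$; along it $\|T_{k}(y_{k})-B(w,w)^{1/2}(y_{k})\|\le\|T_{k}-B(w,w)^{1/2}\|\to 0$ while $B(w,w)^{1/2}(y_{k})\wto B(w,w)^{1/2}(y)$, so $x_{k}=c_{k}+T_{k}(y_{k})\wto c(\lambda)+B(w,w)^{1/2}(y)$, forcing $x=c(\lambda)+B(w,w)^{1/2}(y)\in c(\lambda)+B(w,w)^{1/2}(\overline{D})$. (Operator-norm rather than merely strong convergence of $T_{k}$ is what makes this step work, as $(y_{k})$ converges only weakly.)

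The substance of the argument is (b). With respect to the mutually orthogonal minimal tripotents $e_{1k},\dots,e_{pk}$ in the spectral decomposition of $z_{k}$, the square-root formulae (\ref{eq:Bergmann Sq Rt})--(\ref{eq:Bergmann Neg Sq Rt}) give $T_{k}=\sum_{0\le i\le j\le p}r_{k}\sqrt{\phi_{ik}\phi_{jk}}\,P_{ij}(e_{1k},\dots,e_{pk})$, where $\phi_{jk}=\frac{1-\alpha_{jk}^{2}}{1-r_{k}^{2}\alpha_{jk}^{2}}=\bigl(1+\lambda\alpha_{jk}^{2}\frac{1-\|z_{k}\|^{2}}{1-\alpha_{jk}^{2}}\bigr)^{-1}$ and $\alpha_{0k}=0$. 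A short case analysis ($j\in J$ with $\alpha_{j}=1$; $j\in J$ with $\alpha_{j}<1$, so $\sigma_{j}=0$; and $j\in\{0\}\cup(\{1,\dots,p\}\setminus J)$, where $\alpha_{jk}\to 0$ by Lemma \ref{free-conv}(iii)) gives $\phi_{jk}\to 1-\tilde t_{j}^{2}$, where $\tilde t_{j}=t_{j}$ for $j\in J$ and $\tilde t_{j}=0$ otherwise, so the scalar multiplying $P_{ij}(e_{1k},\dots,e_{pk})$ tends to $\sqrt{(1-\tilde t_{i}^{2})(1-\tilde t_{j}^{2})}$. The obstacle is that the projections $P_{ij}(e_{1k},\dots,e_{pk})$ with $i$ or $j$ outside $J$ need not converge, since by Lemma \ref{free-conv} the tripotents $e_{ik}$, $i\notin J$, converge only weakly and to a non-tripotent. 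This would be circumvented by grouping according to $J$: by Lemma \ref{cor: Peirce alg}, $P_{ij}(e_{1k},\dots,e_{pk})=P_{ij}(e_{sk}:s\in J)$ for $i,j\in J$, while $\sum_{j\in\{0\}\cup(\{1,\dots,p\}\setminus J)}P_{ij}(e_{1k},\dots,e_{pk})=P_{i0}(e_{sk}:s\in J)$ for $i\in J$ and $\sum_{i\le j;\,i,j\in\{0\}\cup(\{1,\dots,p\}\setminus J)}P_{ij}(e_{1k},\dots,e_{pk})=P_{00}(e_{sk}:s\in J)$; by Remark \ref{rem:Convergence of Peirce projections-1} these joint Peirce projections of the \emph{convergent} subfamily $\{e_{sk}:s\in J\}$ converge in operator norm to $P_{ij}(e_{s}:s\in J)$, $P_{i0}(e_{s}:s\in J)$ and $P_{00}(e_{s}:s\in J)$ respectively. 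Since the scalar coefficients have a common limit within each group, collecting terms yields $\lim_{k}T_{k}=\sum_{a\le b;\,a,b\in\{0\}\cup J}\sqrt{(1-t_{a}^{2})(1-t_{b}^{2})}\,P_{ab}(e_{s}:s\in J)$ with $t_{0}:=0$, which by (\ref{eq:Bergmann Sq Rt}) applied to $w=\sum_{s\in J}t_{s}e_{s}$ is precisely $B(w,w)^{1/2}$. I expect this step (b) --- securing operator-norm convergence of $T_{k}$ and verifying that the regrouping via Lemma \ref{cor: Peirce alg} reassembles $B(w,w)^{1/2}$ --- to be the only genuinely delicate part; everything else is bookkeeping and the weak compactness of $\overline{D}$.
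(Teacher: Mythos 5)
Your proposal is correct and follows essentially the same route as the paper: the same regrouping of the joint Peirce projections via Lemma \ref{cor: Peirce alg} and Remark \ref{rem:Convergence of Peirce projections-1}, the same $\sigma_j$ bookkeeping, and the same weak-compactness argument for the forward inclusion. The only organizational difference is that you package the convergence as operator-norm convergence of $T_k$ (the paper instead proves weak convergence of $T_k(w_k)$ along the weakly convergent sequence $(w_k)$ directly), which is a clean strengthening resting on identical estimates, since the residual terms carry scalar coefficients tending to $0$ against uniformly contractive projections.
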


\begin{proof} Let $D$ be realised as
 the open unit ball of a JB{*}-triple $V$ of rank $p$.
Let $(z_{k})$ be the sequence used in the construction of $S(\xi, \lambda)$ in (\ref{hball}),
where $\lim_k z_k =\xi$ and as in (\ref{sp}), we fix a spectral decomposition
$$z_{k} =  \alpha_{1k}e_{1k}+\dots+\alpha_{pk}e_{pk}$$
with $\alpha_j = \lim_k \alpha_{jk}$ for $j =1, \ldots, p$.
Throughout the proof
$\sigma_{j}$ is defined as in \eqref{eq:sigma def-1} and we take
Remark \ref{rem: norm frequently-1} into account.

By Lemma \ref{free-conv}, there exists a nonempty set $J\subset\{1,\dots,p\}$
such that $(e_{jk})_k$ norm converges to a minimal tripotent $e_{j}$ for all $j\in J$,
the tripotents  $\{e_{s}: s\in J\}$ are mutually
orthogonal and $\alpha_{j}=0$ for $j\in\{1,\dots,p\}\backslash J$. We denote the latter set
by $J^{c}$ to simplify notation.

Let $x\in S(\xi,\lambda)$. Then we have $x= \lim_k x_k$, where $x_k$ is an element in the Kobayashi ball $D_{k}(\lambda)$ defined in (\ref{kball}). By (\ref{kball1}), each $x_k$ has the form
$$
x_{k} =  c_{k} + r_{k}B(z_{k},z_{k})^{1/2}B(r_{k}z_{k},r_{k}z_{k})^{-1/2}(w_{k})
$$
 where
$ c_{k} = (1-r_{k}^{2})B(r_{k}z_{k},r_{k}z_{k})^{-1/2}(z_{k})$
and  $w_{k}\in{D}$.

To compute the norm limit $\lim_k x_k$, it suffices to compute a weak subsequential limit
$\lim_{k'} x_{k'}$.
 By weak compactness and by (\ref{eq:c_k(lambda) limit-1}), we may assume, by choosing
subsequences if necessary, that $(w_{k})$  weakly converges to some
$w\in\overline{D}$ and that
\begin{equation}\label{c}
\sigma_j = \lim_k\frac{1-\|z_{k}\|^{2}}{1-\alpha_{jk}^{2}}, \quad
\lim_k c_k =
\sum_{j\in J}\frac{\sigma_{j}\lambda}{1+\sigma_{j}\lambda}e_{j}.
\end{equation}
By the formulae for the square roots of the Bergmann operator in \eqref{eq:Bergmann Sq Rt}
and \eqref{eq:Bergmann Neg Sq Rt}, we have
\begin{eqnarray}
 &  & r_{k}B(z_{k},z_{k})^{1/2}B(r_{k}z_{k},r_{k}z_{k})^{-1/2}(w_k)\nonumber \\
 & = & r_{k}B(z_{k},z_{k})^{1/2}\sum_{0\leq j\leq j'\leq p}(1-r_{k}^{2}\alpha_{jk}^{2})^{-1/2}(1-r_{k}^{2}\alpha_{j'k}^{2})^{-1/2}\,P_{jj'}(e_{1k},\dots,e_{pk})(w_k)\nonumber \\
 & = & r_{k}\sum_{0\leq j\leq j'\leq p}\sqrt{\frac{1-\alpha_{jk}^{2}}{1-r_{k}^{2}\alpha_{jk}^{2}}}
 \sqrt{\frac{1-\alpha_{j'k}^{2}}{1-r_{k}^{2}\alpha_{j'k}^{2}}}\,P_{jj'}(e_{1k},\dots,e_{pk})(w_k)
 \label{eq:Horosphere noncentre-2}
\end{eqnarray}
where $\alpha_{0k}=0$ for all $k$. Note that
 $\alpha_{j}=0$ implies
 $\lim_k \frac{1-\alpha_{jk}^{2}}{1-r_{k}^{2}\alpha_{jk}^{2}} =1$
 and also, $\sigma_{j}=0$.

To compute the sum in (\ref{eq:Horosphere noncentre-2}), we split it into $3$ summands, over the following $3$ sets
of indices:
\begin{eqnarray*}
I &=& \{0\leq j\leq j'\leq p : j, j' \in \{0\} \cup J^c\},\\
II &=& \{(j, j'): j \in \{0\} \cup J^c, j' \in J\},\\
III &=& \{0\leq j\leq j'\leq p : j, j' \in  J\}.
\end{eqnarray*}
We can write the first summand as follows.
\begin{eqnarray*}
\sum_I &=& \sum_{\tiny\begin{array}{c}
j\leq j'\\
j,j'\in\{0\}\cup J^{c}
\end{array}}P_{jj'}(e_{1k},\dots,e_{pk})(w_{k})\\
&& -\sum_{\tiny\begin{array}{c}
j\leq j'\\
j,j'\in\{0\}\cup J^{c}
\end{array}}\left(1-\sqrt{\frac{1-\alpha_{jk}^{2}}{1-r_{k}^{2}\alpha_{jk}^{2}}}
\sqrt{\frac{1-\alpha_{j'k}^{2}}{1-r_{k}^{2}\alpha_{j'k}^{2}}}\,\right)P_{jj'}(e_{1k},\dots,e_{pk})(w_{k}).
\end{eqnarray*}
By Lemma \ref{cor: Peirce alg}\global\long\def\labelenumi{(\roman{enumi}}(iii), we have
\begin{eqnarray*}
\sum_I &=& P_{00}(e_{sk}\medspace:\medspace s\in J)(w_{k})\\
&& -\sum_{\tiny\begin{array}{c}
j\leq j'\\
j,j'\in\{0\}\cup J^{c}
\end{array}}\left(1-\sqrt{\frac{1-\alpha_{jk}^{2}}{1-r_{k}^{2}\alpha_{jk}^{2}}}
\sqrt{\frac{1-\alpha_{j'k}^{2}}{1-r_{k}^{2}\alpha_{j'k}^{2}}}\,\right)P_{jj'}(e_{1k},\dots,e_{pk})(w_{k}).
\end{eqnarray*}
For the second summand, Lemma \ref{cor: Peirce alg}\global\long\def\labelenumi{(\roman{enumi}}(ii) enables us
to write
\begin{eqnarray*}
\sum_{II} &=& \sum_{j'\in J}\sqrt{\frac{1}{r_{k}^{2}}\left(1-\frac{1-r_{k}^{2}}{1-r_{k}^{2}\alpha_{j'k}^{2}}\right)}\left(\sum_{j\in\{0\}\cup J^{c}}P_{jj'}(e_{1k},\dots,e_{pk})(w_{k})\right.\\& &~~ -\left.  \sum_{j\in\{0\}\cup J^{c}}\left(1-\sqrt{\frac{1-\alpha_{jk}^{2}}{1-r_{k}^{2}\alpha_{jk}^{2}}}\,\right)P_{jj'}(e_{1k},\dots,e_{pk})(w_{k})
\right)\\
&=& \sum_{j'\in J}\sqrt{\frac{1}{r_{k}^{2}}\left(1-\frac{1-r_{k}^{2}}{1-r_{k}^{2}\alpha_{j'k}^{2}}\right)}
\mbox{\Huge$($}P_{0j'}(e_{sk}\medspace:\medspace s\in J)(w_{k}) \\ && ~~ -\sum_{j\in\{0\}\cup J^{c}}\left(1-\sqrt{\frac{1-\alpha_{jk}^{2}}{1-r_{k}^{2}\alpha_{jk}^{2}}}\,\right)P_{jj'}(e_{1k},\dots,e_{pk})(w_{k})
\mbox{\Huge$)$}.
 \end{eqnarray*}
By Lemma \ref{cor: Peirce alg}\global\long\def\labelenumi{(\roman{enumi}}(i), the third summand can be written as,
$$ \sum_{III} = \sum_{\tiny\begin{array}{c}
j\leq j'\\
j,j'\in J
\end{array}}\sqrt{\frac{1}{r_{k}^{2}}\left(1-\frac{1-r_{k}^{2}}{1-r_{k}^{2}\alpha_{jk}^{2}}\right)}
\sqrt{\frac{1}{r_{k}^{2}}\left(1-\frac{1-r_{k}^{2}}{1-r_{k}^{2}\alpha_{j'k}^{2}}\right)}\,
P_{jj'}(e_{sk}\medspace:\medspace s\in J)(w_{k}).$$

Noting that all Peirce projections are contractive and by Remark \ref{rem:Convergence of Peirce projections-1}
as well as \eqref{eq:Bergmann Sq Rt}, we have the weak convergence
\begin{eqnarray*}
&&\mbox{weak-}\lim_k r_{k}B(z_{k},z_{k})^{1/2}B(r_{k}z_{k},r_{k}z_{k})^{-1/2}(w_k)
= \mbox{weak-}\lim_k r_k \,\left(\sum_I \,+\, \sum_{II} \,+\, \sum_{III}\right)\\
&=&
P_{00}(e_{s}\medspace:\medspace s\in J)(w)
  +\sum_{j'\in J}\sqrt{1-\frac{\sigma_{j'}\lambda}{1+\sigma_{j'}\lambda}}\,P_{0j'}(e_{s}\medspace:\medspace s\in J)(w)\\
&&  +\sum_{\tiny\begin{array}{c}
j\leq j'\\
j,j'\in J
\end{array}}\sqrt{1-\frac{\sigma_{j}\lambda}{1+\sigma_{j}\lambda}}\sqrt{1-\frac{\sigma_{j'}\lambda}
{1+\sigma_{j'}\lambda}}\,P_{jj'}(e_{s}\medspace:\medspace s\in J)(w)\\
 & = & B\left(\sum_{j\in J}\sqrt{\frac{\sigma_{j}\lambda}{1+\sigma_{j}\lambda}}\,e_{j},\,\sum_{j\in J}\sqrt{\frac{\sigma_{j}\lambda}{1+\sigma_{j}\lambda}}\,e_{j}\right)^{1/2}(w).
\end{eqnarray*}

Now, together with (\ref{c}), we have
\[
x=\lim_{k}x_{k}=\sum_{j\in J}\frac{\sigma_{j}\lambda}{1+\sigma_{j}\lambda}e+B\left(\sum_{j\in J}\sqrt{\frac{\sigma_{j}\lambda}{1+\sigma_{j}\lambda}}\,e_{j},\, \sum_{j\in J}\sqrt{\frac{\sigma_{j}\lambda}{1+\sigma_{j}\lambda}}\,e_{j}\right)^{1/2}(w)
\]
which belongs to $\sum_{j\in J}\frac{\sigma_{j}\lambda}{1+\sigma_{j}\lambda}e_{j}+B\left(\sum_{j\in J}\sqrt{\frac{\sigma_{j}\lambda}{1+\sigma_{j}\lambda}}\,e_{j}\,,~\sum_{j\in J}\sqrt{\frac{\sigma_{j}\lambda}{1+\sigma_{j}\lambda}}\,e_{j}\right)^{1/2}(\overline{D})$.

Conversely, let
\begin{align*}
y & \in\sum_{j\in J}\frac{\sigma_{j}\lambda}{1+\sigma_{j}\lambda}e_{j}+B\left(\sum_{j\in J}\sqrt{\frac{\sigma_{j}\lambda}{1+\sigma_{j}\lambda}}\,e_{j},\, \sum_{j\in J}\sqrt{\frac{\sigma_{j}\lambda}{1+\sigma_{j}\lambda}}\,e_{j}\right)^{1/2}(\overline{D}).
\end{align*}
 We show $y\in S(\xi,\lambda)$. There exists some $x\in\overline{D}$
such that
\begin{align*}
y & =\sum_{j\in J}\frac{\sigma_{j}\lambda}{1+\sigma_{j}\lambda}e_{j}+B\left(\sum_{j\in J}\sqrt{\frac{\sigma_{j}\lambda}{1+\sigma_{j}\lambda}}\,e_{j},\, \sum_{j\in J}\sqrt{\frac{\sigma_{j}\lambda}{1+\sigma_{j}\lambda}}\,e_{j}\right)^{1/2}(x).
\end{align*}
Let $$y_{k}= (1-r_{k}^{2})B(r_{k}z_{k},r_{k}z_{k})^{-1/2}(z_{k})  +r_{k}B(z_{k},z_{k})^{1/2}B(r_{k}z_{k},r_{k}z_{k})^{-1/2}(x).$$
Then $y_{k}\in D_{k}(\lambda)$. Repeating the convergence arguments as before,
with $x$ in  place of $w_k$, but with
 norm convergence as opposed to weak convergence, one sees that
 $ y= \lim_k y_k \in S(\xi, \lambda)$, which completes the proof.
\end{proof}

\begin{rem}\label{reform}
By re-ordering the index set $J$ in Theorem \ref{thm:explicit description of horosphere}, the description of
the boundary point $\xi$ and $S(\xi,\lambda)$ can be reformulated more simply as:
$$\xi = \sum_{j=1}^m \alpha_j e_j \qquad (\alpha_j >0)$$
for some $m \in \{1, \ldots, p\}$ and
$$S(\xi, \lambda)  =  \sum_{j=1}^m\frac{\sigma_{j}\lambda}{1+\sigma_{j}\lambda}\,e_{j}+B\left(\sum_{j=1 }^m\sqrt{\frac{\sigma_{j}\lambda}{1+\sigma_{j}\lambda}}\,e_{j}, ~\sum_{j=1 }^m\sqrt{\frac{\sigma_{j}\lambda}{1+\sigma_{j}\lambda}}\,e_{j}\right)^{1/2}(\overline{D})
$$ where $\sigma_j \geq 0$ and $\max\{\sigma_j: j=1, \ldots,m\}=1$.
\end{rem}

For finite rank bounded symmetric domains, we now have the following generalisation of
Wolff's theorem.

\begin{thm}\label{interior} Let $f$  be a fixed-point free compact holomorphic self-map on a  bounded symmetric domain $D$ of finite rank  $p$.
Then there is a sequence $(z_k)$ in $D$ converging
to a boundary point $$ \xi=\sum_{j=1}^m\alpha_{j}e_{j} \qquad (\alpha_{j} >0,\, m \in \{1, \ldots,p\})$$
 where $e_1, \ldots , e_m$ are  orthogonal
minimal tripotents in $\partial D$, such that for each $\lambda >0$, the convex $f$-invariant domain $H(\xi, \lambda)$ is the horoball $S_0(\xi,\lambda)$ at $\xi$, which has the form
\begin{eqnarray}\label{511}
S_0(\xi, \lambda) & = & \sum_{j=1}^m\frac{\sigma_{j}\lambda}{1+\sigma_{j}\lambda}\,e_{j}+B\left(\sum_{j=1 }^m\sqrt{\frac{\sigma_{j}\lambda}{1+\sigma_{j}\lambda}}\,e_{j}, ~\sum_{j= 1}^m\sqrt{\frac{\sigma_{j}\lambda}{1+\sigma_{j}\lambda}}\,e_{j}\right)^{1/2}({D})
\end{eqnarray}
and is affinely homeomorphic to $D$, where $\sigma_{j}\geq 0$ and $\max\{\sigma_{j}:j=1, \ldots, m\}=1$.
\end{thm}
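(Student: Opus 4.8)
The plan is to derive the three still-open assertions — that $H(\xi,\lambda)$ coincides with the horoball $S_0(\xi,\lambda)$, that $S_0(\xi,\lambda)$ has the stated form \eqref{511}, and that it is affinely homeomorphic to $D$ — directly from the explicit description of the closed horoball $S(\xi,\lambda)$ obtained in Theorem \ref{thm:explicit description of horosphere}, in the streamlined form of Remark \ref{reform}. Non-emptiness, convexity, $f$-invariance and the assertion that $H(\xi,\lambda)$ is a domain are nothing new: they are precisely what Theorem \ref{g} supplies for $H(\xi,\lambda)$, applied to the same sequence. So I take the sequence $(z_k)$, the boundary point $\xi=\sum_{j=1}^{m}\alpha_{j}e_{j}$ with $\alpha_{j}>0$ and mutually orthogonal minimal tripotents $e_{1},\dots,e_{m}\in\partial D$, and the scalars $\sigma_{j}\ge 0$ with $\max_{j}\sigma_{j}=1$ provided there; writing $c_\lambda:=\sum_{j=1}^{m}\frac{\sigma_{j}\lambda}{1+\sigma_{j}\lambda}e_{j}$ and $v_\lambda:=\sum_{j=1}^{m}\sqrt{\frac{\sigma_{j}\lambda}{1+\sigma_{j}\lambda}}\,e_{j}$, that theorem states
\[
S(\xi,\lambda)=c_\lambda+B(v_\lambda,v_\lambda)^{1/2}(\overline{D})\qquad(\lambda>0),
\]
with $m$, the $e_{j}$ and the $\sigma_{j}$ all independent of $\lambda$.

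First I would observe that $v_\lambda\in D$: since the $e_{j}$ are mutually orthogonal tripotents, $\|v_\lambda\|^{2}=\max_{j}\frac{\sigma_{j}\lambda}{1+\sigma_{j}\lambda}=\frac{\lambda}{1+\lambda}<1$, the maximum being attained at any index with $\sigma_{j}=1$. Hence $\|v_\lambda\bo v_\lambda\|=\|v_\lambda\|^{2}<1$, so $B(v_\lambda,v_\lambda)$ is invertible with non-negative spectrum and $B(v_\lambda,v_\lambda)^{1/2}$ is bounded and invertible. Consequently the affine map $\Phi_\lambda\colon x\mapsto c_\lambda+B(v_\lambda,v_\lambda)^{1/2}(x)$ is a homeomorphism of $V$ onto itself carrying $\overline{D}$ onto $S(\xi,\lambda)$. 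Since a homeomorphism sends interiors to interiors and $D$ is the interior of $\overline{D}$, it follows at once that $S_0(\xi,\lambda)$, being the interior of $S(\xi,\lambda)=\Phi_\lambda(\overline{D})$, equals $\Phi_\lambda(D)$; this is exactly \eqref{511}, and it exhibits $\Phi_\lambda|_{D}$ as an affine homeomorphism of $D$ onto $S_0(\xi,\lambda)$.

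It then remains only to prove $H(\xi,\lambda)=S_0(\xi,\lambda)$. The inclusion $H(\xi,\lambda)\subset S_0(\xi,\lambda)$ is the first assertion of Lemma \ref{subset}, so the task is to show that every $x\in S_0(\xi,\lambda)$ satisfies $F(x)<1/\lambda$. The idea is a one-sided perturbation of the parameter. Given such an $x=\Phi_\lambda(u)$ with $\|u\|<1$, I would note that as $\mu\to\lambda^{+}$ the finitely many coefficients $\frac{\sigma_{j}\mu}{1+\sigma_{j}\mu}$ vary continuously, so $c_\mu\to c_\lambda$ and $v_\mu\to v_\lambda$ in norm, while $\|v_\mu\|^{2}=\frac{\mu}{1+\mu}$ stays bounded away from $1$; by the identity $\|B(v_\mu,v_\mu)^{-1/2}\|=(1-\|v_\mu\|^{2})^{-1}$ the operators $B(v_\mu,v_\mu)^{-1/2}$ are uniformly bounded and continuous in $\mu$, so $u_\mu:=B(v_\mu,v_\mu)^{-1/2}(x-c_\mu)=\Phi_\mu^{-1}(x)$ converges in norm to $\Phi_\lambda^{-1}(x)=u$. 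Hence $\|u_\mu\|<1$ for some $\mu>\lambda$, and for that $\mu$ one has $x=\Phi_\mu(u_\mu)\in c_\mu+B(v_\mu,v_\mu)^{1/2}(\overline{D})=S(\xi,\mu)$; since also $x\in S_0(\xi,\lambda)\subset D$, the second assertion of Lemma \ref{subset}, applied with $\mu$ in place of $\lambda$, gives $F(x)\le 1/\mu<1/\lambda$, as required.

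I expect the perturbation step in the last paragraph to be the only real obstacle: one must know that the closed horoball at a slightly larger parameter still contains the given interior point, which is exactly the quantitative content of the $\lambda$-continuous formula for $S(\xi,\lambda)$ from Theorem \ref{thm:explicit description of horosphere}. Without such an explicit description one could not promote the bound $F\le 1/\lambda$ on $S(\xi,\lambda)\cap D$ coming from Lemma \ref{subset} to the strict inequality on the interior. Everything else — invertibility of $B(v_\lambda,v_\lambda)^{1/2}$, the interior-to-interior bookkeeping for $\Phi_\lambda$, and the continuity estimates — rests only on the norm formula for $\|B(\cdot,\cdot)^{-1/2}\|$ and on $\|\sum c_{j}e_{j}\|=\max_{j}|c_{j}|$ for mutually orthogonal tripotents, and should be routine.
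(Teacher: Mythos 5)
Your proposal is correct, and the first half (the explicit form \eqref{511} and the affine homeomorphism, obtained by noting that $\Phi_\lambda=c_\lambda+B(v_\lambda,v_\lambda)^{1/2}$ is an affine homeomorphism of $V$ carrying $\overline D$ onto $S(\xi,\lambda)$ and hence $D$ onto its interior) is exactly the paper's argument. Where you genuinely diverge is the inclusion $S_0(\xi,\lambda)\subset H(\xi,\lambda)$, which the paper defers to Theorem \ref{inv} and proves there by a direct computation: writing $x=\lim_k x_k$ with $x_k\in D_k(\lambda)$, using formula (\ref{y}) to get $g_{-z_k}(x_k)=r_kg_{-r_kz_k}(v)$, and then invoking the norm identity (\ref{gab}) to show that $\lim_k(1-r_k^2)/(1-\|g_{-z_k}(x_k)\|^2)=\ell/(1+\ell)<1$ for a subsequential limit $\ell$ of a bounded sequence of operator norms, whence $F(x)<1/\lambda$ via Lemma \ref{k}. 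Your route instead perturbs the parameter: since the data $(e_j,\sigma_j)$ in Theorem \ref{thm:explicit description of horosphere} are defined by \eqref{eq:sigma def-1} independently of $\lambda$, the maps $\mu\mapsto c_\mu,v_\mu$ are continuous, $\Phi_\mu^{-1}(x)\to\Phi_\lambda^{-1}(x)$ as $\mu\to\lambda^{+}$, so $x\in S(\xi,\mu)\cap D$ for some $\mu>\lambda$ and the soft half of Lemma \ref{subset} gives $F(x)\le 1/\mu<1/\lambda$. This is shorter and avoids the M\"obius computation entirely; what it does not yield is the extra conclusion of Theorem \ref{inv} describing $\overline H(\xi,\lambda)\cap D$, which is not part of the present statement. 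Two small points to tidy: the continuity of $\mu\mapsto B(v_\mu,v_\mu)^{-1/2}$ does not follow from the norm identity alone, but is immediate from the Peirce formula \eqref{eq:Bergmann Neg Sq Rt} since the joint Peirce projections of $e_1,\dots,e_m$ are fixed and only the scalar coefficients vary; and you should not lean on Theorem \ref{g} for non-emptiness (in the finite-rank case the paper routes that through Corollary \ref{h}, which itself cites Theorem \ref{interior}), but this is harmless because your inclusion $\emptyset\neq\Phi_\lambda(D)=S_0(\xi,\lambda)\subset H(\xi,\lambda)$ delivers non-emptiness independently.
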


\begin{proof} As before, we identify $D$ as the open unit ball of a JB*-triple $V$ of rank $p$.
Let $(z_k)$ be the sequence in  Theorem \ref{thm:explicit description of horosphere} with limit
$\xi = \sum_{j=1}^m \alpha_je_j$   as in Remark \ref{reform}.
 Let $\{\sigma_j: j=1, \ldots, m \}$ be the same as in Remark \ref{reform}.
Observe that
the map $\varphi : V \rightarrow V$ defined by
$$ \varphi = \sum_{j=1}^m\frac{\sigma_{j}\lambda}{1+\sigma_{j}\lambda}\,e_{j}+B\left(\sum_{j=1 }^m\sqrt{\frac{\sigma_{j}\lambda}{1+\sigma_{j}\lambda}}\,e_{j}, ~\sum_{j=1 }^m\sqrt{\frac{\sigma_{j}\lambda}{1+\sigma_{j}\lambda}}\,e_{j}\right)^{1/2}$$
is an affine homeomorphism since the Bergmann operator $B(a,a)^{1/2}$ is invertible
 whenever $\|a\| <1$. Hence $\varphi (D)$ is the interior of
$\varphi (\overline D) = S(\xi,\lambda)$, that is, $S_0(\xi,\lambda) = \varphi (D)$
which proves (\ref{511}). It also shows that $S_0(\xi,\lambda)$ and $D$ are affinely
homeomorphic. The equality  $H(\xi, \lambda) = S_0(\xi, \lambda)$ is shown in the next theorem.
\end{proof}

\begin{rem}\label{final} For $p=1$, that is, $D$ is a Hilbert ball, in the above theorem,
we have
$$S_0(\xi, \lambda) = \frac{\lambda}{1+\lambda} \xi + B\left(\sqrt{\frac{\lambda}{1+\lambda}}\xi\,,\sqrt{\frac{\lambda}{1+\lambda}}
\xi\right)^{1/2}({D})$$ and in one dimension, that is, $D=\mathbb{D}$, it reduces to Wolff's horodisc in (\ref{horo1}):
$$S_0(\xi, \lambda) = \frac{\lambda}{1+\lambda} \xi + \frac{1}{1+\lambda}\, \mathbb{D}.$$
\end{rem}

\begin{thm}\label{inv} Let $f$ be a fixed-point free compact holomorphic self-map on a finite-rank bounded symmetric domain $D$. For each $\lambda >0$, let $H(\xi, \lambda)$ be the $f$-invariant domain defined in Theorem \ref{g}. Then we have $H(\xi, \lambda) =S_0(\xi,\lambda)$ and the closure $\overline H (\xi, \lambda)$ satisfies
$$\overline H(\xi, \lambda)\cap D = S(\xi, \lambda)\cap D = \left\{x\in  D: \limsup_{k\rightarrow \infty} \frac{1-\|z_k\|^2}{1-\|g_{-z_k}(x)\|^2} \leq \frac{1}{\lambda}\right\}.$$

\end{thm}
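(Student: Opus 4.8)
The plan is to establish the chain of equalities
$$
H(\xi,\lambda)\;=\;S_0(\xi,\lambda)
\qquad\text{and}\qquad
\overline H(\xi,\lambda)\cap D
\;=\;S(\xi,\lambda)\cap D
\;=\;\{x\in D:F(x)\le 1/\lambda\}.
$$
From Lemma~\ref{subset} we already have the inclusions $H(\xi,\lambda)\subset S_0(\xi,\lambda)$ and $S(\xi,\lambda)\cap D\subset\{x\in D:F(x)\le 1/\lambda\}$, so the task is to prove the reverse inclusions and then tie everything together. The key structural input is the explicit formula for $S(\xi,\lambda)$ from Theorem~\ref{thm:explicit description of horosphere} (in the reformulated form of Remark~\ref{reform}) and the fact, recorded in the proof of Theorem~\ref{interior}, that the affine map $\varphi$ carrying $\overline D$ onto $S(\xi,\lambda)$ is a homeomorphism with $\varphi(D)=S_0(\xi,\lambda)$.

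First I would prove $S_0(\xi,\lambda)\subset H(\xi,\lambda)$. Take $x\in S_0(\xi,\lambda)=\varphi(D)$, so $x=\varphi(v)$ with $\|v\|<1$. One can scale: since $\varphi$ is affine and $\varphi(D)$ is open, there is $t>1$ close to $1$ such that $x$ still lies in the open set obtained by replacing $\overline D$ by $tD\cap\overline D$ in the formula — more precisely, $x=\lim_k x_k$ where $x_k\in D_k(\lambda)$ is built (as in the last paragraph of the proof of Theorem~\ref{thm:explicit description of horosphere}) from a fixed $v\in D$, so that $\|g_{-z_k}(x_k)\|<r_k$; but because $\|v\|<1$ strictly and the convergence $x_k\to x$ is in norm, for large $k$ we in fact get $\|g_{-z_k}(x)\|<r_k$ too, hence
$$
\frac{1-\|z_k\|^2}{1-\|g_{-z_k}(x)\|^2}
\;<\;\frac{1-\|z_k\|^2}{1-r_k^2}
\;=\;\frac1\lambda
$$
eventually, giving $F(x)<1/\lambda$, i.e.\ $x\in H(\xi,\lambda)$. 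Combined with Lemma~\ref{subset} this yields $H(\xi,\lambda)=S_0(\xi,\lambda)$; I would isolate the strict-inequality argument carefully, as that is where the openness of $D$ (equivalently $\|v\|<1$) is genuinely used, and it is the main obstacle: turning the limit membership $x=\lim x_k$, $x_k\in D_k(\lambda)$, into the eventual strict membership $x\in D_k(\lambda)$ requires the slack coming from $v$ being an interior point together with Lemma~\ref{k}.

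Next I would handle the closures. Since $S(\xi,\lambda)=\varphi(\overline D)$ and $S_0(\xi,\lambda)=\varphi(D)$ with $\varphi$ a homeomorphism of $V$, and $\overline D$ is the closure of $D$, we get $\overline{S_0(\xi,\lambda)}=S(\xi,\lambda)$ (the closure in $V$; intersecting with $D$ does no harm since $\varphi(D)\subset D$). As $H(\xi,\lambda)=S_0(\xi,\lambda)$, this gives $\overline H(\xi,\lambda)=S(\xi,\lambda)$ and hence $\overline H(\xi,\lambda)\cap D=S(\xi,\lambda)\cap D$. It remains to prove $\{x\in D:F(x)\le 1/\lambda\}\subset S(\xi,\lambda)\cap D$, the reverse of the second inclusion in Lemma~\ref{subset}. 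For this, fix $x\in D$ with $F(x)\le 1/\lambda$ and, for $\mu>\lambda$, note $F(x)<1/\mu'$ fails in general but $F(x)<1/\mu$ whenever $\mu<\lambda$; instead, approximate from inside: for $0<\alpha<1$ the point $\alpha x\in D$ satisfies $F(\alpha x)<F(x)\le 1/\lambda$ — this monotonicity can be read off from the norm estimate in Lemma~\ref{f} together with $\|g_{-z_k}(\alpha x)\|\le\|g_{-z_k}(x)\|$ (contractivity of $z\mapsto g_{-z_k}(\alpha z)$, or directly from the Schwarz lemma applied to $z\mapsto\alpha z$) — so $\alpha x\in H(\xi,\lambda)=S_0(\xi,\lambda)\subset S(\xi,\lambda)$. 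Letting $\alpha\uparrow 1$ and using that $S(\xi,\lambda)$ is closed (it is, being $\varphi(\overline D)$), we conclude $x\in S(\xi,\lambda)\cap D$. Finally, assembling the three equalities proves the theorem; I would end by double-checking that the set on the far right, $\{x\in D:\limsup_k (1-\|z_k\|^2)/(1-\|g_{-z_k}(x)\|^2)\le 1/\lambda\}$, is literally $\{x\in D:F(x)\le 1/\lambda\}$ by the definition of $F$ in Lemma~\ref{f}, so no further work is needed there.
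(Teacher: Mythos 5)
Your overall architecture matches the paper's (invoke Lemma \ref{subset} for the easy inclusions, then prove $S_0(\xi,\lambda)\subset H(\xi,\lambda)$ and $\{x\in D:F(x)\le 1/\lambda\}\subset S(\xi,\lambda)\cap D$, and get $\overline H(\xi,\lambda)\cap D=S(\xi,\lambda)\cap D$ from $\varphi(\overline D)=\overline{\varphi(D)}$), but both of your key steps have genuine gaps. In the first step you deduce $F(x)<1/\lambda$ from the \emph{eventual termwise} strict inequality $\frac{1-\|z_k\|^2}{1-\|g_{-z_k}(x)\|^2}<\frac{1}{\lambda}$. That implication is a non sequitur: a $\limsup$ of terms each strictly below $1/\lambda$ can still equal $1/\lambda$, and here $\|g_{-z_k}(x_k)\|$ and $r_k$ both tend to $1$, so no uniform gap is automatic. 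You flag this as ``the main obstacle'' but never overcome it. The paper does the required quantitative work: from (\ref{y}) one gets $g_{-z_k}(x_k)=r_k\,g_{-r_kz_k}(v)$, and the identity (\ref{gab}) shows that $(1-r_k^2)\|B(v,v)^{-1/2}B(v,r_kz_k)B(r_kz_k,r_kz_k)^{-1/2}\|$ is bounded, hence converges along a subsequence to a finite $\ell\ge 0$; this yields $\lim_k\frac{1-r_k^2}{1-\|g_{-z_k}(x_k)\|^2}=\frac{\ell}{1+\ell}<1$ and therefore $F(x)=\frac{1}{\lambda}\cdot\frac{\ell}{1+\ell}<\frac{1}{\lambda}$. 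It is exactly the finiteness of $\ell$ (coming from $\|v\|<1$) that converts the ``slack'' into a uniform bound; without it your argument only gives $F(x)\le 1/\lambda$.

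The second gap is worse: your inclusion $\{F\le 1/\lambda\}\subset S(\xi,\lambda)$ rests on the monotonicity $F(\alpha x)<F(x)$ for $0<\alpha<1$, justified by ``$\|g_{-z_k}(\alpha x)\|\le\|g_{-z_k}(x)\|$''. The Schwarz lemma applied to $z\mapsto\alpha z$ gives $\|g_{-\alpha z_k}(\alpha x)\|\le\|g_{-z_k}(x)\|$, with base point $\alpha z_k$, not $z_k$, so your inequality is unsupported; and in fact it is false. Already in $\mathbb D$ with $z_k\to\xi=1$ one has $F(x)=|1-x|^2/(1-|x|^2)$, which for $x\in(0,1)$ equals $(1-x)/(1+x)$ and is \emph{decreasing} in $x$; thus $F(\alpha x)>F(x)$ --- scaling towards the origin moves the point away from the horocentre and increases $F$. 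The paper's argument avoids this entirely by keeping $x$ fixed and shrinking the parameter instead: $F(x)\le 1/\lambda<1/(\varepsilon\lambda)$ gives $x\in H(\xi,\varepsilon\lambda)=S_0(\xi,\varepsilon\lambda)$ for every $\varepsilon\in(0,1)$, and one writes $x=\varphi_{\varepsilon\lambda}(x_\varepsilon)$ with $x_\varepsilon\in D$, extracts a weak limit point $x_1\in\overline D$ of $\{x_\varepsilon\}$ as $\varepsilon\to1$, and concludes $x=\varphi_\lambda(x_1)\in S(\xi,\lambda)$. You would need to replace your scaling argument by something of this kind.
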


\begin{proof}
By Lemma \ref{subset},
it suffices to show $S_0(\xi,\lambda) \subset H(\xi, \lambda)$ for the first assertion.
By (\ref{511}), we have
$$S_0(\xi,\lambda) = c(\lambda) + B(D)$$
where
$$c(\lambda) = \sum_{i=1}^m\frac{\sigma_{i}\lambda}{1+ \sigma_{i}\lambda}e_{i} \quad
{\rm and} \quad  B = B\left(\sum_{i=1}^m\sqrt{\frac{\sigma_{i}\lambda}{1 +\sigma_{i}\lambda}}e_{i}\,,~\sum_{i\in J}\sqrt{\frac{\sigma_{i}\lambda}{1 + \sigma_{i}\lambda}}e_{i}\right)^{1/2}.$$

Let $x\in S_0(\xi, \lambda)$. Then there exists $v \in D$ such that
\begin{eqnarray*}
x &=& c(\lambda) + B(v)\\
&=& \lim_k [(1-r_k^2)B(r_k,z_k,r_kz_k)^{-1/2}(z_k) + r_k B(z_k,z_k)^{1/2}B(r_kz_k,r_kz_k)^{-1/2}(v)].
\end{eqnarray*}
Let $x_k = (1-r_k^2)B(r_k z_k,r_kz_k)^{-1/2}(z_k) + r_k B(z_k,z_k)^{1/2}B(r_kz_k,r_kz_k)^{-1/2}(v).$
Then by (\ref{y}) we have $$v = g_{r_kz_k}\left(\frac{g_{-z_k}(x_k)}{r_k}\right)$$
which gives $g_{-z_k}(x_k) = r_k g_{-r_kz_k}(v)$ and
$$\limsup_{k\rightarrow \infty} \frac{1-\|z_k\|^2}{1-\|g_{-z_k}(x_k)\|^2} =\limsup_{k\rightarrow \infty} \frac{1-\|z_k\|^2}{1-r_k^2\|g_{-r_kz_k}(v)\|^2}$$
where, by choosing a subsequence, we may replace the {\it upper limit} $\limsup_k$ by the {\it limit} $\lim_k$ in the
following computation.
Since, by (\ref{gab}),
$$\|g_{-r_kz_k}(v)\|^2 = 1-  \frac{1}{\|B(v,v)^{-1/2}B(v, r_kz_k)B(r_kz_k, r_kz_k)^{-1/2}\|}\,,$$
we have
\begin{eqnarray}\label{lll}
\frac{1-r_k^2}{1-\|g_{-z_k}(x_k)\|^2} &=& \frac{1-r_k^2}{1-\|g_{-r_kz_k}(v)\|^2 + (1-r_k^2)\|g_{-r_kz_k}(v)\|^2}\nonumber\\
&=& \frac{(1-r_k^2)\|B(v,v)^{-1/2}B(v, r_kz_k)B(r_kz_k, r_kz_k)^{-1/2}\|}{r_k^2 +(1-r_k^2)\|B(v,v)^{-1/2}B(v, r_kz_k)B(r_kz_k, r_kz_k)^{-1/2}\|}.
\end{eqnarray}

We observe that the sequence $((1-r_k^2)\|B(v,v)^{-1/2}B(v, r_kz_k)B(r_kz_k, r_kz_k)^{-1/2}\|)$ is bounded by
$$\frac{(1-r_k^2)\|B(v,v)^{-1/2}\|(1+r_k \|v\|\|z_k\|)^2}{1-r_k^2\|z_k\|^2}\leq \|B(v,v)^{-1/2}\|(1+\|v\|)^2.$$
Passing to a subsequence, we may assume that the sequence converges to some limit $\ell \geq 0$.

 Hence we have, from (\ref{lll}),
$$ \lim_{k\rightarrow \infty} \frac{1-r_k^2}{1-\|g_{-z_k}(x_k)\|^2} = \frac{\ell}{1+\ell} <1.$$ It follows that
$$\lim_{k\rightarrow \infty}\frac{1-\|z_k\|^2}{1-\|g_{-z_k}(x)\|^2}= \lim_{k\rightarrow \infty}\frac{1-\|z_k\|^2}{1-\|g_{-z_k}(x_k)\|^2} = \lim_{k \rightarrow \infty}
\frac{1-\|z_k\|^2}{1-r_k^2}\frac{1-r_k^2}{1-\|g_{-z_k}(x_k)\|^2} = \left(\frac{1}{\lambda}\right)\left(
\frac{\ell}{1+\ell}\right) < \frac{1}{\lambda}$$
and $x \in H(\xi, \lambda)$. This proves $S_0(\xi,\lambda) \subset H(\xi, \lambda)$.

For the second assertion, we need only show the last equality. It has already been shown in
Lemma \ref{subset} that $S(\xi, \lambda)\cap D
 \subset \{x \in D: F(x) \leq 1/\lambda\}$. Conversely,
 let $x \in D$ and
$F(x) \leq 1/\lambda$. Then for $0<\vp<1$, we have $F(x) < \frac{1}{\vp \lambda}$ which implies
$x\in H(\xi,\vp \lambda) \subset S_0(\xi,\vp \lambda)$. Hence by (\ref{511}),
$$x  =  \sum_{j=1}^m\frac{\sigma_{j}\vp\lambda}{1+\sigma_{j}\vp\lambda}\,e_{j}+B\left(\sum_{j=1 }^m\sqrt{\frac{\sigma_{j}\vp\lambda}{1+\sigma_{j}\vp\lambda}}\,e_{j}, ~\sum_{j= 1}^m\sqrt{\frac{\sigma_{j}\vp\lambda}{1+\sigma_{j}\vp\lambda}}\,e_{j}\right)^{1/2}(x_\vp)$$
for some $x_\vp \in D$. Let $x_1\in \overline D$ be a weak limit point of $\{x_\vp: 0 < \vp <1\}$, and let $\vp \rightarrow 1$. Then we have
$$x  =  \sum_{j=1}^m\frac{\sigma_{j}\lambda}{1+\sigma_{j}\lambda}\,e_{j}+B\left(\sum_{j=1 }^m\sqrt{\frac{\sigma_{j}\lambda}{1+\sigma_{j}\lambda}}\,e_{j}, ~\sum_{j= 1}^m\sqrt{\frac{\sigma_{j}\lambda}{1+\sigma_{j}\lambda}}\,e_{j}\right)^{1/2}(x_1)\in S(\xi, \lambda).$$
This completes the proof.

\end{proof}

The previous two results show that $H(\xi, \lambda) \neq \emptyset$ for all $\lambda >0$ in finite rank bounded symmetric domains.
We conclude this section by showing that this is in fact the case for {\it all} bounded symmetric domains.
Let $D$ be a bounded symmetric domain realised as the open unit ball of a JB*-triple $V$. By the Gelfand-Naimark
theorem for JB*-triples \cite{fb}, $V$ can be realised as a closed subtriple of an $\ell^\infty$-sum $\bigoplus_\iota
V_\iota$ of Cartan factors $V_\iota$.

Given $r>0$, we show that $F^{-1}[0, r) \neq \emptyset$ for the function
 $F$ in Lemma \ref{f}, which would entail $H(\xi, 1/r) \neq \emptyset$.
 For this, we need to find an element $x\in D$ satisfying
$$\limsup_k \|B(x,x)^{-1/2}B(x, z_k) B(z_k,z_k)^{-1/2}\|(1-\|z_k\|^2) < r$$
where the sequence $(z_k)$ is as before, with limit $\xi \in \partial D$. We first observe that,  given
$a\in V$, the Bergmann operators $B(a,a)$ with respect to $V$ and with respect to $\bigoplus_\iota V_\iota$
coincide on $V$. For our purpose, we may therefore assume $V= \bigoplus_\iota V_\iota$ without loss of generality.
Further, since the triple product on  $\bigoplus_\iota V_\iota$ is defined coordinatewise, we have, for $a= \oplus_\iota a_\iota \in\bigoplus_\iota V_\iota$,
$$B(a,a) = \bigoplus_\iota B(a_\iota, a_\iota)$$
and it can be seen that, for $x= \oplus_\iota x_\iota$ and $z= \oplus_\iota z_\iota$ in V, we have
\begin{eqnarray}
&&\|B(x,x)^{-1/2}B(x, z) B(z,z)^{-1/2}\|(1-\|z\|^2)\nonumber\\ &=& \sup_\iota \|B(x_\iota,x_\iota)^{-1/2}B(x_\iota, z_\iota) B(z_\iota,z_\iota)^{-1/2}\|(1-\|z\|^2)\label{iota1}\\
&\leq&\sup_\iota \|B(x_\iota,x_\iota)^{-1/2}B(x_\iota, z_\iota) B(z_\iota,z_\iota)^{-1/2}\|(1-\|z_\iota\|^2)\label{iota2}.
\end{eqnarray}

\begin{cor}\label{h}
Let $f$ be a fixed-point free compact holomorphic self-map on a bounded symmetric domain $D$. Then
the $f$-invariant domain $H(\xi, \lambda)$ is non-empty for each $\lambda>0$.
\end{cor}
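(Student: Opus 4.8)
\emph{Proof proposal.} The plan is to exhibit an explicit element of $H(\xi,1/r)$ for each $r>0$, namely $s\xi$ for a scalar $s\in(0,1)$ close enough to $1$; this lies in $D$ since $\|s\xi\|=s\|\xi\|=s<1$, and $1/r$ runs over all positive reals. Working with $V=\bigoplus_\iota V_\iota$ and the coordinate expansions $\xi=\bigoplus_\iota\xi_\iota$, $z_k=\bigoplus_\iota z_{k,\iota}$ as arranged above, I would first combine (\ref{iota1}) with (\ref{gab}) to rewrite
\[
F(s\xi)=\limsup_k\;\sup_\iota\frac{1-\|z_k\|^2}{1-\|g_{-z_{k,\iota}}(s\xi_\iota)\|^2},
\]
so that the goal is to bound this double expression by a quantity that tends to $0$ as $s\to1$. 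Two facts are used throughout: $z_{k,\iota}\to\xi_\iota$ in norm uniformly in $\iota$ (being dominated by $\|z_k-\xi\|\to0$), and $\|z_k\|\to\|\xi\|=1$.

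The main device is to split the index set after fixing a small $\varepsilon>0$: set $A=\{\iota:\|\xi_\iota\|\le 1-\varepsilon\}$ and $B=\{\iota:\|\xi_\iota\|>1-\varepsilon\}$. For $\iota\in A$ and $k$ large one has $\|z_{k,\iota}\|\le 1-\varepsilon/2$, so by the triangle inequality for $\kappa$ through $0$, $\|g_{-z_{k,\iota}}(s\xi_\iota)\|\le\tanh\bigl(\tanh^{-1}(1-\varepsilon/2)+\tanh^{-1}s\bigr)=:\rho_\varepsilon<1$; hence $\sup_{\iota\in A}$ of the ratio is at most $(1-\|z_k\|^2)/(1-\rho_\varepsilon^2)$, which tends to $0$. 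For $\iota\in B$ I would keep only $1-\|z_k\|^2\le 1-\|z_{k,\iota}\|^2$ in the numerator and bound the denominator from below via $\kappa(z_{k,\iota},s\xi_\iota)\le\kappa(z_{k,\iota},sz_{k,\iota})+\kappa(sz_{k,\iota},s\xi_\iota)$. The second term is at most $\delta_k:=\tanh^{-1}\bigl(s\|z_k-\xi\|/(1-s)\bigr)\to0$, uniformly in $\iota$, since $y\mapsto sy$ sends $\overline D$ into the closed ball of radius $s$, on which $\kappa$ is Lipschitz-comparable to the norm. For the first term, the map $\zeta\mapsto\zeta z_{k,\iota}/\|z_{k,\iota}\|$ from $\mathbb D$ into $D_\iota$ is $\kappa$-nonexpansive, giving $\tanh\kappa(z_{k,\iota},sz_{k,\iota})\le(1-s)\|z_{k,\iota}\|/(1-s\|z_{k,\iota}\|^2)=:t_k$, and one has the elementary identity $(1-\|z_{k,\iota}\|^2)/(1-t_k^2)=\phi_s(\|z_{k,\iota}\|)^2$ with $\phi_s(w)=(1-sw^2)/\sqrt{1-s^2w^2}$. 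The addition formula for $\tanh$ then yields
\[
\frac{1-\|z_{k,\iota}\|^2}{1-\|g_{-z_{k,\iota}}(s\xi_\iota)\|^2}\;\le\;\frac{\phi_s(\|z_{k,\iota}\|)^2\,(1+\tanh\delta_k)^2}{1-\tanh^2\delta_k},
\]
and because $\phi_s$ decreases on $[0,1]$ and $\|z_{k,\iota}\|>1-2\varepsilon$ for $\iota\in B$ and $k$ large, $\limsup_k\sup_{\iota\in B}$ of the ratio is at most $\phi_s(1-2\varepsilon)^2$.

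Together these give $F(s\xi)\le\phi_s(1-2\varepsilon)^2$ for every $\varepsilon>0$. Given $r>0$, choose $s$ with $(1-s)/(1+s)<r$ and then, since $\phi_s(1-2\varepsilon)^2\to(1-s)/(1+s)$ as $\varepsilon\to0$, choose $\varepsilon$ with $\phi_s(1-2\varepsilon)^2<r$; this yields $F(s\xi)<r$, so $s\xi\in H(\xi,1/r)$, and as $r>0$ is arbitrary this proves the corollary. I expect the $B$-estimate to be the crux: the coordinates with $\|\xi_\iota\|$ near $1$ (including $\|\xi_\iota\|=1$) are exactly those along which $z_{k,\iota}$ approaches $\partial D_\iota$, so numerator and denominator of the ratio both vanish in the limit, and the real work is to pin down the rate uniformly in $\iota$ — which is what the Kobayashi triangle inequality together with the one-dimensional comparison accomplishes. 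The $A$-estimate, the identity for $\phi_s$, and the two limits are routine.
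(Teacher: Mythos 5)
Your proposal is correct, but it takes a genuinely different route from the paper's. The paper also reduces to the $\ell^{\infty}$-sum of Cartan factors, but then argues factor by factor with a three-way case split: $\|\xi_\iota\|<1$ (trivial), $\|\xi_\iota\|=1$ with $V_\iota$ of finite rank (where it invokes the horoball description of Theorem \ref{interior}), and $\|\xi_\iota\|=1$ with $V_\iota$ of infinite rank, where it realises $V_\iota$ inside $\mathcal{L}(H,H)$ and carries out an explicit Bergmann-operator computation with the candidate $x_t=t\xi_\iota$ and an auxiliary operator $T_k$, arriving at the bound $\sqrt{(1-t)/(1+t)}+t(1-t)/(1+t)$. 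You instead take the single global candidate $s\xi$ and run a purely metric argument: the triangle inequality for $\kappa$ through $sz_{k,\iota}$, the Schwarz--Pick comparison via the disc $\zeta\mapsto\zeta z_{k,\iota}/\|z_{k,\iota}\|$, the identity $(1-w^2)/(1-t_k^2)=\phi_s(w)^2$, and the addition formula for $\tanh$; all steps check out, including the monotonicity of $\phi_s$ on $[0,1]$ and the inequality $t_k\le 1$ needed to pass from $(1+t_k\tanh\delta_k)^2$ to $(1+\tanh\delta_k)^2$. Your route is shorter, avoids both the rank/type case analysis and the operator manipulations in $\mathcal{L}(H,H)$, does not rely on Theorem \ref{interior}, and yields the sharper bound $F(s\xi)\le(1-s)/(1+s)$, which exactly matches the one-dimensional horodisc criterion for the point $s\xi$. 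In fact your $A$/$B$ split and even the Cartan-factor decomposition are dispensable: since $\|z_k\|\to 1$ and $\|z_k-\xi\|\to 0$, the same two-step triangle-inequality estimate applied directly in $D$ (with $w=\|z_k\|$) already gives $F(s\xi)\le\lim_k\phi_s(\|z_k\|)^2=(1-s)/(1+s)$, so the argument works in the open unit ball of an arbitrary JB*-triple with no structure theory at all. What the paper's longer computation buys is that it stays within the Bergmann-operator formalism used throughout and recycles the finite-rank horoball description already established.
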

\begin{proof}
In view of previous remarks, we need only consider the case where $D$ is the open unit ball of
an $\ell^\infty$-sum $V= \bigoplus_\iota V_\iota$ of Cartan factors. As in Lemma \ref{f}, let
\[
F(x)=\limsup_{k\rightarrow\infty}\frac{1-\|z_{k}\|^{2}}{1-\|g_{-z_{k}}(x)\|^{2}}\qquad(x\in D).
\]
For each $r >0$, we show $F^{-1}[0, r) \neq \emptyset$  which would complete the proof.
Let
$\xi = \oplus_\iota \xi_\iota$ where $1= \|\xi\| = \sup_\iota \|\xi_\iota\|$. Since
\[
F(x) = \limsup_k \|B(x,x)^{-1/2}B(x, z_k) B(z_k,z_k)^{-1/2}\|(1-\|z_k\|^2),
\]
using (\ref{iota1}), it suffices to show that for each Cartan factor $V_\iota$,
there is some $x_\iota$ in its open unit ball $ D_\iota$ satisfying
\begin{equation}\label{r}
\limsup_k \|B(x_\iota,x_\iota)^{-1/2}B(x_\iota, z_{k,\iota}) B(z_{k,\iota},z_{k,\iota})^{-1/2}\|(1-\|z_k\|^2) < r
\end{equation}
where $z_k = \oplus_\iota z_{k,\iota}\in \bigoplus_\iota V_\iota$. If $\|\xi_\iota\| = \lim_k \|z_{k,\iota}\| <1$, then this is obvious since
$$\limsup_{k\rightarrow\infty}\frac{1-\|z_{k}\|^{2}}{1-\|g_{-z_{k,\iota}}(x_\iota)\|^{2}} = \frac{0}{1-\|g_{-\xi_\iota}(x_\iota)\|^{2}}=0$$
for any $x_\iota \in D_\iota$.
If $\|\xi_\iota\|=1$ and if $V_\iota$ is of finite rank, then Theorem \ref{interior} and (\ref{iota2}) also
implies such $x_\iota \in D_\iota$ exists.

 There remains the case of an {\it infinite-rank} Cartan factor $V_\iota$ with $\|\xi_\iota\|=1$. Such $V_\iota$  can be realised as a closed
subtriple of $\mathcal{L}(H,H)$ for some Hilbert space $H$. Similar remark about the Bergmann operator $B(a,a)$
as before allows us to assume $V_\iota = \mathcal{L}(H,H)$ without loss of generality. Let $t \in (0,1)$. For each $k$, define
a bounded linear operator $T_k : V_\iota \rightarrow V_\iota$ by
$$T_k(z) = (\mathbf{1}-tz_{k,\iota}z_{k,\iota}^*)z(\mathbf{1}-z_{k,\iota}^*z_{k,\iota}) \qquad (z \in V_\iota=\mathcal{L}(H,H)).$$
Then by (\ref{baa}), we have $T_k B(z_{k,\iota},z_{k,\iota})^{-1/2}(z) = (\mathbf{1}-tz_{k,\iota}z_{k,\iota}^*)(\mathbf{1}-z_{k,\iota}z_{k,\iota}^*)^{-1/2}z(\mathbf{1}-z_{k,\iota}^*z_{k,\iota})^{1/2}$.
Since
\begin{eqnarray}
&&\|(\mathbf{1}-z_{k,\iota}z_{k,\iota}^*)^{-1/2}z(\mathbf{1}-z_{k,\iota}^*z_{k,\iota})^{1/2} \|^2\nonumber\\ &=& \|(\mathbf{1}-z_{k,\iota}z_{k,\iota}^*)^{-1/2}z(\mathbf{1}-z_{k,\iota}^*z_{k,\iota})z^*
(\mathbf{1}-z_{k,\iota}z_{k,\iota}^*)^{-1/2}\|\nonumber\\&=&\|B(z_{k,\iota},z_{k,\iota})^{-1/2}(z(\mathbf{1}-z_{k,\iota}^*z_{k,\iota})z^*)\| \leq \frac{\|z\|^2}{1-\|z_{k,\iota}\|^2},
\label{onehalf}
\end{eqnarray}
we infer $\|T_k B(z_{k,\iota},z_{k,\iota})^{-1/2}\| \leq 1/\sqrt{1-\|z_{k,\iota}\|^2}$ and hence
\begin{equation}
\limsup_k \|B(x,x)^{-1/2}T_k B(z_{k,\iota},z_{k,\iota})^{-1/2}\|(1-\|z_{k,\iota}\|^2) \leq \lim_k \frac{\sqrt{1-\|z_{k,\iota}\|^2}}{1-\|x\|^2} =0
\label{zero}
\end{equation}
for  all $x \in D_\iota \subset V_\iota$.

Let $x_t = t\xi_\iota \in D_\iota$. Then we have
$$\lim_k \|B(x_t,x_t)^{-1/2}B(x_t,z_{k,\iota}) - B(x_t,x_t)^{-1/2}T_k\|
= \|B(x_t,x_t)^{-1/2}B(x_t,\xi_\iota) - B(x_t,x_t)^{-1/2}T\|$$
where $T:V_\iota \rightarrow V_\iota$ is the operator $T(z) = (\mathbf{1}-t\xi_\iota\xi_\iota^*)z(\mathbf{1}
-\xi_\iota^*\xi_\iota) $ and for $z \in V_\iota$,
\begin{eqnarray*}
 &&\|(B(x_t,x_t)^{-1/2}B(x_t,\xi_{\iota}) - B(x_t,x_t)^{-1/2}T)(z)\|\\
 &=& \|(\mathbf{1}-t^2\xi_\iota\xi_\iota^*)^{-1/2}(\mathbf{1}-t\xi_\iota\xi_\iota^*)z(\mathbf{1}- t\xi_{\iota}^*\xi_{\iota} -\mathbf{1}+\xi_{\iota}^*\xi_{\iota})(\mathbf{1}-t^2\xi_{\iota}^*\xi_{\iota})
 ^{-1/2}\|\\
 &=&\|(\mathbf{1}-t^2\xi_\iota\xi_\iota^*)^{-1/2}(\mathbf{1}-t^2\xi_\iota\xi_\iota^*+ (t^2-t)\xi_\iota\xi_\iota^*)z(1-t)\xi_{\iota}^*\xi_{\iota}(\mathbf{1}-t^2\xi_{\iota}^*\xi_{\iota})
 ^{-1/2}\|
 \\
&\leq& \|(\mathbf{1}-t^2\xi_\iota\xi_\iota^*)^{1/2}z(1-t)\xi_{\iota}^*\xi_{\iota}(\mathbf{1}-t^2\xi_{\iota}^*\xi_{\iota})
 ^{-1/2}\| \\&&+ \|(\mathbf{1}-t^2\xi_\iota\xi_\iota^*)^{-1/2} (t^2-t)\xi_\iota\xi_\iota^*z(1-t)\xi_{\iota}^*\xi_{\iota}(\mathbf{1}-t^2\xi_{\iota}^*\xi_{\iota})
 ^{-1/2}\|\\
&\leq& \frac{(1-t)\|z\|}{\sqrt{1-t^2\|\xi_\iota\|^2}} + \frac{(t-t^2)(1-t)\|z\|}{1-t^2\|\xi_\iota\|^2}
 \end{eqnarray*}
 where the last inequality follows from a computation similar to (\ref{onehalf}).
 This gives
 \begin{eqnarray*}
 &&\lim_k\|B(x_t,x_t)^{-1/2}B(x_t,z_{k,\iota}) - B(x_t,x_t)^{-1/2}T_k\|\\ &\leq& \frac{1-t}{\sqrt{1-t^2}} + \frac{(t-t^2)(1-t)}{1-t^2} = \sqrt{\frac{1-t}{1+t}} + \frac{t(1-t)}{1+t}.
 \end{eqnarray*}
It follows from this inequality and (\ref{zero}) that
$$\limsup_k\|B(x_t,x_t)^{-1/2}B(x_t,z_{k,\iota})B(z_{k,\iota},z_{k,\iota})^{-1/2}\|(1-\|z_k\|^2) \leq \sqrt{\frac{1-t}{1+t}} + \frac{t(1-t)}{1+t}$$ which can be made less than $r$ by choosing $t$ very close to $1$.
For such $t$, the element $x_t\in D_\iota$ satisfies (\ref{r}) and the proof is complete.
\end{proof}

\section{Iteration of holomorphic maps}

The Denjoy-Wolff theorem holds for fixed-point free compact
holomorphic self-maps on bounded symmetric domains of rank one,
which asserts convergence of the iterates to a single boundary
point \cite{cm1,h,m}. The rank-one domains are the Hilbert balls
of which the boundary points are exactly the {\it boundary components}
(defined below)  of
the boundary. On the other hand, the Denjoy-Wolff theorem fails
for the bidiscs. Instead of converging to a single boundary point,
 Herv\'e \cite{h1}  has shown by intricate and lengthy arguments
 that the iterates accumulate in
the closure of a single boundary component. This
suggests that a natural generalisation of the Denjoy-Wolff theorem
to other domains should be that the limit set of  a fixed-point free compact
holomorphic self-map
$f$ is contained in the closure of a single boundary component, where the limit
set of $f$ consists of the images of all subsequential limits
$\lim_k f^{n_k}$ of the iterates $(f^n)$, where $f^n =\underbrace{f \circ \cdots \circ f}_{n\mbox{-}{\rm times}}$.

As in the case of the complex unit disc, the generalised Wolff theorem in
Theorem \ref{interior} enables us to show in this section that for finite-rank bounded symmetric
domains, all images of subsequential limits with weakly closed
range are indeed contained in the closure of a single boundary component.

The concept of a boundary component of a convex domain $U$ in a
Banach space $Z$ has been introduced and studied in  \cite{ks,lo}. A
subset $C\subset \overline U$ is called a  {\it
boundary component} of the closure $\overline U$ if the following conditions
are satisfied:
\begin{enumerate}
\item[(i)] $C \neq \emptyset$; \item[(ii)] for each holomorphic
map $f: \mathbb{D}\longrightarrow Z$ with $f(\mathbb{D})\subset
\overline U$, either $f(\mathbb{D})\subset C$ or $f(\mathbb{D})
\subset \overline U \backslash C$; \item[(iii)] $C$ is minimal
with respect to (i) and (ii).
\end{enumerate}
Two boundary components  are either equal or
disjoint. The interior $U$ is the unique open boundary component of $\overline U$, all others
are contained in the boundary $\partial U$ \cite{ks}.  By a slight abuse of language,
we also called the latter boundary components {\it of} $\partial U$. For
each $a \in \overline U$, we denote by $K_a$ the boundary
component containing $a$.

Given a holomorphic map $h : U \rightarrow \overline U$, the image $h(U)$ is entirely
contained in a single boundary component of $\overline U$ (cf.\,\cite[Lemma 3.3]{lie}).

For a bounded symmetric domain $D$ realised as
the open unit ball of a JB*-triple $V$, the boundary component $K_e$ of a tripotent
$e$ is given by $K_e = e + V_0(e)\cap D$, where $V_0(e)= P_0(e)(V)$ is the Peirce $0$-space.
Moreover, if $V$ is of finite rank, then each boundary component of $\partial D$ is of this form
(cf.\,\cite[Proposition 4.3]{ks} and \cite[Theorem 6.3]{lo}). Write $D_e$ for the open unit
ball $ V_0(e)\cap D$ in the JB*-triple $V_0(e)$. Since $P_0(e)$ is a contractive projection, we see that
$D_e = P_0(e)(D)$ and $\overline K_e = e +\overline{ D_e}
= e+ P_0(e)(\overline D)$. Also, the boundary $\partial K_e$ of $K_e$ equals $e+ \partial D_e$.

Each tripotent $c$ in $V_0(e)$ is orthogonal to $e$ and its Peirce $0$-space in $V_0(e)$ is the eigenspace
$$(V_0(e))_0(c) = \{v\in V_0(e): (c \bo c)(v)=0\}.$$

\begin{lem}\label{ooc}
In the above notation, we have $(V_0(e))_0(c)= V_0(e+c)$.
\end{lem}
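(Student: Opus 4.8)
The plan is to prove the set equality $(V_0(e))_0(c)= V_0(e+c)$ by a double inclusion, exploiting the fact that $e$ and $c$ are orthogonal tripotents, so that $e+c$ is again a tripotent and the joint Peirce calculus for the pair $\{e,c\}$ is available. The basic computational tool is the relation between the box operator $(e+c)\bo(e+c)$ and the individual box operators: since $e\bo c = c\bo e = 0$ (orthogonality), we have $(e+c)\bo(e+c) = e\bo e + c\bo c$.

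First I would fix $v \in V_0(e)$, i.e. $(e\bo e)(v)=0$, and show that $v\in V_0(e+c)$ is equivalent to $(c\bo c)(v)=0$. Indeed, for such $v$ we compute $((e+c)\bo(e+c))(v) = (e\bo e)(v) + (c\bo c)(v) = (c\bo c)(v)$, using that $v\in V_0(e)$ kills the first term. Now $v\in V_0(e+c)$ means $((e+c)\bo(e+c))(v)=0$, which by the displayed computation is exactly $(c\bo c)(v)=0$, i.e. $v\in (V_0(e))_0(c)$. This already gives $(V_0(e))_0(c) \subset V_0(e+c)$ directly: if $v\in (V_0(e))_0(c)$ then $v\in V_0(e)$ and $(c\bo c)(v)=0$, so $((e+c)\bo(e+c))(v)=0$.

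For the reverse inclusion $V_0(e+c)\subset (V_0(e))_0(c)$, the one subtlety is that a priori an element $v\in V_0(e+c)$ need not lie in $V_0(e)$ — I must first establish $V_0(e+c)\subset V_0(e)$, and separately $V_0(e+c)\subset V_0(c)$. This follows from the Peirce multiplication/decomposition structure: writing the joint Peirce decomposition of $V$ with respect to the orthogonal tripotents $e, c$, the formula \eqref{eq:Peirce-0 of sum} gives $V_0(e+c) = \bigoplus_{i,j\in M\setminus\{1,2\}} V_{ij}$ (with $e_1=e$, $e_2=c$), and each such joint Peirce space $V_{ij}$ with $i,j\notin\{1\}$ lies in $V_0(e)$, while each with $i,j\notin\{2\}$ lies in $V_0(c)$; hence $V_0(e+c)\subset V_0(e)\cap V_0(c)$. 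Alternatively, and more cleanly, one can argue spectrally: the operators $e\bo e$ and $c\bo c$ commute (both being polynomials in the commuting idempotent-like operators, or directly because $e,c$ orthogonal implies $P_k(e)$ and $P_k(c)$ commute), each has spectrum in $\{0,\tfrac12,1\}$, their sum $(e+c)\bo(e+c)$ is the operator whose kernel is $V_0(e+c)$, and the kernel of a sum of two commuting positive semidefinite operators is the intersection of their kernels. Thus $v\in V_0(e+c)$ forces $(e\bo e)(v)=0$ and $(c\bo c)(v)=0$, i.e. $v\in V_0(e)$ and $v\in (V_0(e))_0(c)$.

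The main obstacle is purely bookkeeping: making sure the Peirce $0$-space of $c$ computed \emph{inside the subtriple} $V_0(e)$ agrees with the restriction to $V_0(e)$ of the kernel of $c\bo c$ acting on all of $V$ — but this is immediate since $c\in V_0(e)$ and $V_0(e)$ is a subtriple, so the triple product (hence $c\bo c$) on $V_0(e)$ is just the ambient one restricted. I would close by assembling the two inclusions. The whole argument is short; the only thing to be careful about is invoking the commuting-positive-operators fact (or the joint Peirce formula \eqref{eq:Peirce-0 of sum}) rather than assuming $V_0(e+c)\subset V_0(e)$ without justification.
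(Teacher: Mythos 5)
Your proof is correct and rests on the same key fact as the paper's: the joint Peirce decomposition for the orthogonal pair $\{e,c\}$, via \eqref{eq:Peirce-0 of sum}, gives $V_0(e+c)=V_{00}(e,c)=V_0(e)\cap V_0(c)$, which is exactly $(V_0(e))_0(c)$ since $V_0(e)$ is a subtriple containing $c$. The paper states this in one line; your version merely spells out the two inclusions (and offers an equivalent commuting-positive-operators argument), so the approach is essentially identical.
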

\begin{proof}
Considering the joint Peirce decomposition induced by $\{e,c\}$ and from
(\ref{eq:Peirce-0 of sum}), we have
$$V_0(e+c)=V_{00}(e,c) = V_0(e) \cap V_0(c) = (V_0(e))_0(c).$$

\end{proof}

We see from Lemma \ref{ooc} that each boundary component of $\partial D_e$ is of the form
$c + (V_0(e))_0(c)\cap D_e= c+V_0(e+c)\cap D$ for some tripotent $c \in V_0(e)$. Hence each boundary
component of $\partial K_e$ is of the form $e+c + V_0(e+c)\cap D=K_{e+c}$, which is also a boundary
component of $ \overline D$.

Given a compact holomorphic self-map $f$ on a bounded symmetric domain $D$, we call a holomorphic map
$h: D \rightarrow \overline D$ a {\it limit function} of the iterates $(f^n)$ if there is a subsequence
$(f^{n_k})$ of $(f^n)$ converging to $h$ locally uniformly.

\begin{rem}\label{normal}\rm It has been shown in
\cite[Lemma 1]{cm1} that  every subsequence of
$(f^{n_k})$ has a subsequence converging locally uniformly to a holomorphic map $h : D \rightarrow \overline D$.
It follows that if $(f^n)$ has a unique limit function $h$, then $(f^n)$ converges locally uniformly to $h$.
\end{rem}

\begin{thm}\label{dw} Let $D$ be a bounded symmetric domain of finite rank $p$
and let $f:D \rightarrow D$ be a compact fixed-point free holomorphic map. Then there
is a boundary point $\xi \in \partial D$ of the form
$$\xi = \sum_{j=1}^m \alpha_je_j \quad (\alpha_j >0,\, m \leq p)$$ for some orthogonal
tripotents $e_1, \ldots, e_m \in \partial D$, such that
for each limit function $h$ of $(f^n)$ with weakly closed range, we have $h(D) \subset \overline K_e$,
where $K_e$ is the boundary component of $e = e_1 + \cdots +e_m$ in $\partial D$.
\end{thm}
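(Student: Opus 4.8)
The plan is to feed the limit function into the $f$-invariant horoballs. By Theorems~\ref{interior} and~\ref{inv}, for each $\lambda>0$ the convex domain $H(\xi,\lambda)=S_{0}(\xi,\lambda)$ is $f$-invariant, its closure is the closed horoball $S(\xi,\lambda)=\varphi_{\lambda}(\overline D)$, where $\varphi_{\lambda}$ is the affine homeomorphism of $V$ appearing in Theorem~\ref{interior}, and $D=\bigcup_{\lambda>0}H(\xi,\lambda)$. I will show that every limit function $h$ of $(f^{n})$ with weakly closed range satisfies $h(D)\subset\bigcap_{\lambda>0}S(\xi,\lambda)$, and then identify $\bigcap_{\lambda>0}S(\xi,\lambda)$ with $\overline{K_{e}}$ by letting $\lambda\to\infty$ in the explicit Bergmann description of Theorem~\ref{thm:explicit description of horosphere}.

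First, $h(D)$ lies in a single boundary component of $\overline D$ (the property recalled before the theorem, cf.\ \cite[Lemma~3.3]{lie}); the open component $D$ is excluded since $f$ is fixed-point free, because otherwise some orbit would accumulate in $D$, and a standard argument using relative compactness of $f(D)$ (cf.\ \cite{a,cm1}) would produce a fixed point of $f$ in $D$. Hence $h(D)\subset K_{c}$ for a tripotent $c\in\partial D$, with $\overline{K_{c}}=c+P_{0}(c)(\overline D)$. Next, the invariance estimate in the proof of Theorem~\ref{g} shows that the function $F$ of Lemma~\ref{f} satisfies $F(f(x))\le F(x)$, so $(F(f^{n}(x)))_{n}$ decreases to some $\ell(x)\ge 0$; whenever $\ell(x)<1/\lambda$ one has $f^{n}(x)\in H(\xi,\lambda)$ for large $n$, hence $h(x)\in S(\xi,\lambda)$. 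Since $F^{-1}[0,r)\ne\emptyset$ for all $r>0$ (Corollary~\ref{h}), this already forces $h$ to map the nonempty open convex set $H(\xi,\lambda)=\{F<1/\lambda\}$ into $S(\xi,\lambda)$, for every $\lambda$.

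To upgrade this to $h(D)\subset S(\xi,\lambda)$, observe that $h(D)-c$ is a \emph{bounded} holomorphic map of $D$ into the bounded symmetric domain $D_{c}=V_{0}(c)\cap D$ with weakly closed range; composing with $\varphi_{\lambda}^{-1}$ gives a bounded holomorphic map of $D$ carrying a nonempty open subset into $\overline D$, and since its image stays inside a single boundary component it cannot leave $\overline D$ anywhere. Thus $h(D)\subset\bigcap_{\lambda>0}S(\xi,\lambda)$. Finally, by Remark~\ref{reform} and the computations preceding Theorem~\ref{thm:explicit description of horosphere}, as $\lambda\to\infty$ the centres $\sum_{j}\frac{\sigma_{j}\lambda}{1+\sigma_{j}\lambda}e_{j}$ converge in norm to $e$ and the accompanying Bergmann square roots converge in operator norm to $B(e,e)^{1/2}=P_{0}(e)$; writing $y\in\bigcap_{\lambda}S(\xi,\lambda)$ as $y=\varphi_{\lambda}(x_{\lambda})$ with $x_{\lambda}\in\overline D$ and extracting a weak limit $w$ of $(x_{\lambda})$ by reflexivity of $V$, we get $y=e+P_{0}(e)(w)\in e+P_{0}(e)(\overline D)=\overline{K_{e}}$. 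Hence $h(D)\subset\overline{K_{e}}$, which for $p=1$ reduces to the Denjoy--Wolff theorem for Hilbert balls.

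The delicate point is the upgrade in the third paragraph from ``$h$ maps an open subset into the closed horoball'' to ``$h$ maps all of $D$ into it'': one cannot simply follow the orbit, since $\ell(x)=\lim_{n}F(f^{n}(x))$ can be strictly positive --- already for a parabolic automorphism of $\mathbb D$, where the orbit runs along a fixed horocycle and never enters the smaller horoballs --- so the iterates need not witness membership in every $S(\xi,\lambda)$. The leverage has to come from the rigidity that $h(D)$ is confined to a single boundary component, i.e.\ to an affine copy of a lower-rank bounded symmetric domain, together with the weakly closed range hypothesis and the weak compactness of $\overline D$.
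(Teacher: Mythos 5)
Your overall scaffolding (horoball invariance forces $h$ into closed horoballs; the intersection $\bigcap_{\lambda}S(\xi,\lambda)$ collapses to $\overline{K_e}$) is close in spirit to the paper, and your second paragraph correctly gets $h(H(\xi,\lambda))\subset S(\xi,\lambda)$. But the step you yourself flag as delicate --- upgrading to $h(D)\subset S(\xi,\lambda)$ for a \emph{fixed} $\lambda$ --- is a genuine gap, and the justification offered (``since its image stays inside a single boundary component it cannot leave $\overline D$ anywhere'' after composing with $\varphi_\lambda^{-1}$) is not an argument. Containment in a single boundary component does not propagate membership in a closed horoball from an open piece of the image to all of it: a boundary component can meet $S(\xi,\lambda)$ without being contained in it. For example, in the bidisc with $\xi=e_1=(1,0)$ one has $S(\xi,\lambda)=\overline H(1,\lambda)\times\overline{\mathbb D}$, while the component $K_{(0,1)}=\mathbb D\times\{1\}$ meets this set but is not contained in it; so the set-theoretic implication you are invoking is false, and nothing in your paragraph three explains why holomorphy plus weakly closed range rescues it. Ruling out such components is exactly the content of the theorem, so as written the argument is circular at its crux. (A secondary soft spot, shared with the paper: when some $\sigma_j=0$ the centres $\sum_j\frac{\sigma_j\lambda}{1+\sigma_j\lambda}e_j$ converge to $\sum_{\sigma_j>0}e_j$ rather than to $e$, so the $\lambda\to\infty$ limit needs more care.)

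The paper sidesteps the fixed-$\lambda$ upgrade entirely. It picks a single point $y_n\in S_0(\xi,n)$ for each $n$, uses $f$-invariance to get $h(y_n)\in S(\xi,n)$, writes $h(y_n)$ via the explicit Bergmann form of Theorem \ref{thm:explicit description of horosphere}, and extracts a \emph{weak} limit as $n\to\infty$, landing at a point $e+P_0(e)(w)\in\overline{K_e}$; weak closedness of $h(D)$ puts this one point in $h(D)$. The decisive structural input is then Lemma \ref{ooc} and the remark following it: $\overline{K_e}=K_e\cup\partial K_e$ is a union of boundary components of $\overline D$ (each boundary component of $\partial K_e$ is of the form $K_{e+c}$), so the single component $K_u$ containing $h(D)$, having nonempty intersection with $\overline{K_e}$, must equal one of these components and hence lie in $\overline{K_e}$. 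This ``component rigidity'' of $\overline{K_e}$ --- a property the individual horoballs $S(\xi,\lambda)$ do not have --- is the missing leverage in your proposal; to repair your argument you should replace the fixed-$\lambda$ upgrade by this one-point-plus-component-structure step.
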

\begin{proof} Let $\xi = \displaystyle\sum_{j=1}^m \alpha_je_j$ be the boundary point obtained in Theorem \ref{interior},
where $\alpha_j >0$, $m \leq p$ and $e_1, \ldots, e_m$ are orthogonal minimal tripotents.
 Let $h$ be a limit function such that $h(D)$ is weakly closed. Since $f$ is a compact map,
it follows from \cite[Theorem 3.1]{kk} that
 $h(D) \subset \partial D$ (cf.\,\cite[Lemma 6.5]{lie}). By previous remarks,
$h(D)$ is contained in a boundary component $K_u$ of $\overline D$ for some tripotent $u \in \partial D$.

For $n =1,2, \ldots$, pick $y_n$ in the horoball $S_0(\xi,n)$. By $f$-invariance, we have $h(y_n) \in S(\xi, n)$,
which, from Theorem \ref{interior}, is of the form
$$h(y_n) = \sum_{j=1}^m\frac{\sigma_{j}n}{1+\sigma_{j}n}\,e_{j}+ B\left(\sum_{j=1 }^m\sqrt{\frac{\sigma_{j}n}{1+\sigma_{j}n}}\,e_{j}, ~\sum_{j= 1}^m\sqrt{\frac{\sigma_{j}n}{1+\sigma_{j}n}}\,e_{j}\right)^{1/2}(w_n)$$
for some $w_n \in \overline D$. Let $(w_{n_k})$ be a subsequence of $(w_n)$ weakly converging to $w\in \overline D$, say. Then the sequence $(h(y_{n_k}))$ weakly converges to
$$\sum_{j=1}^m  e_{j}+ B\left(\sum_{j=1 }^m e_{j},\, ~\sum_{j= 1}^m e_{j}\right)^{1/2}(w)=
\sum_{j=1}^m  e_{j}+ P_0\left(\sum_{j=1 }^m e_{j}\right)(w) \in \overline K_e$$
where $K_e$ is the boundary component in $\partial D$ containing the tripotent $e = e_1 + \cdots + e_m$.
Since $h(D)$ is weakly closed, we have $\emptyset \neq h(D) \cap \overline K_e \subset K_u \cap
\overline K_e$ and $K_u$ meets either $K_e$ or a boundary component of $\partial K_e$. By the remark
following Lemma \ref{ooc}, the latter is also a boundary component of $\overline D$. It follows that
either $K_u=K_e$ or $K_u$ is a boundary component of $\partial K_e$, that is, $K_u \subset \overline K_e$
which gives $h(D) \subset \overline K_e$.

\end{proof}

\begin{rem} The above result generalises the Denjoy-Wolff theorem for Hilbert balls.
Indeed, for a compact fixed-point free holomorphic self-map $f$ on a Hilbert ball $D$,
we have $\xi =e_1$ and $ K_{e_1}
=\{\xi\}$ in the above
 theorem. Each subsequential limit $h$ of $(f^n)$ is constant \cite[p.1775]{cm1}
 and hence $h(D)=\{\xi\}$. This implies locally uniform convergence of $(f^n)$
 to the constant map taking value $\xi$, by Remark \ref{normal}.
\end{rem}

\begin{exam}\label{final}\rm  Although it is known that the Denjoy-Wolff Theorem fails for a
non-compact holomorphic self-map on a Hilbert ball \cite{s}, we see in the following example that compactness
is not a necessary condition for a Denjoy-Wolff type theorem (see also \cite{cr}). This example also
shows that the image $h(D)$ of a limit function $h$ can be a singleton or a whole boundary component.

Let $D$ be a finite-rank bounded symmetric domain of rank $p$. Pick any nonzero $a\in D$, with spectral decomposition
$$a= \alpha_{1}e_{1}+\dots+\alpha_{p}e_{p} \quad( \|a\|= \alpha_1 \geq \cdots \geq \alpha_p \geq 0).$$
Let $g_a: D\rightarrow D$ be the M\"obius transformation induced by $a$, which is not a compact map if $D$ is
 infinite dimensional. Let $x=\beta_1e_1+\beta_2e_2+\cdots+\beta_p e_p$, where $\beta_1, \beta_2,\ldots,\beta_p \in \mathbb{D}$ so that $x\in D$. By orthogonality, we have
$$ x \bo a =( \beta_1e_1+\beta_2e_2+\cdots+\beta_p e_p) \bo (\alpha_{1}e_{1}+\dots+\alpha_{p}e_{p} )
= \beta_1\alpha_{1}e_{1} \bo e_1+\dots+ \beta_p\alpha_{p}e_{p}\bo e_p$$ and
$$(x\bo a)^n(x) = \beta_1^{n+1}\alpha_{1}^ne_{1} +\cdots+ \beta_p^{n+1}\alpha_{p}^ne_{p} \qquad (n=1,2,\ldots).$$
It follows that
 \begin{eqnarray*}
g_a(x) &=& a + B(a,a)^{1/2}(\mathbf{1}+ x\bo a)^{-1}(x)\\
&=& a+ B(a,a)^{1/2}(\mathbf{1}- x \bo a + (x\bo a)^2 - (x\bo a)^3 + \cdots)(x)\\
&=& a + B(a,a)^{1/2}(\beta_1e_1+\beta_2e_2+\cdots+\beta_p e_p - (\beta_1^2\alpha_{1}e_{1} +\cdots+ \beta_p^2\alpha_{p}e_{p}) + \cdots)\\
&=& a+ B(a,a)^{1/2}[ (1-\beta_1\alpha_1+\beta_1^2\alpha_1^2 + \cdots)\beta_1e_1 + \cdots + (1-\beta_p\alpha_p+\beta_p^2\alpha_p^2 + \cdots)\beta_pe_p)]\\
&=&  a+ B(a,a)^{1/2}\left(\frac{\beta_1e_1}{1+\beta_1\alpha_1} + \cdots +
\frac{\beta_pe_p}{1+\beta_p\alpha_p}\right)\\
&=& \alpha_{1}e_{1}+\dots+\alpha_{p}e_{p}  +  \frac{(1-\alpha_1^2)\beta_1e_1}{1+\beta_1\alpha_1}
+ \cdots + \frac{(1-\alpha_p^2)\beta_p e_p}{1+\beta_p\alpha_p}\\
&=& \frac{\alpha_1 + \beta_1}{1+\alpha_1\beta_1}\,e_1 + \cdots +
\frac{\alpha_p + \beta_p}{1+\alpha_p \beta_p}\,e_p\\
&=& \frak g_{\alpha_1}(\beta_1)e_1 + \cdots + \frak g_{\alpha_p}(\beta_p)e_p
\end{eqnarray*}
where $\frak g_{\alpha_j}$ is the M\"obius transformation on the complex disc $\mathbb{D}$, induced
by $\alpha_j$ for $j =1, \ldots, p$. If $\alpha_j=0$, then $\frak g_{\alpha_j}$ is the identity map.
If $\alpha_j > 0$, then the iterates $(\frak g^n_{\alpha_j})$ converge locally uniformly to
the constant map with value $\alpha_j/|\alpha_j| =1$. Hence the iterates
$$g_a^n(x) = \frak g_{\alpha_1}^n(\beta_1)e_1 + \cdots + \frak g_{\alpha_p}^n(\beta_p)e_p \qquad (n =2,3,\ldots)$$
converge to
$$ e_1 + \gamma_2e_2 + \cdots + \gamma_p e_p, \quad \gamma_j =
\left\{\begin{array}{cc} 1 & (\alpha_j > 0)\\
                         \beta_j & (\alpha_j =0) \end{array}\right.
\quad (j=2, \ldots,p).
$$
In particular, if $\alpha_j > 0$ for all $j$, then by Remark \ref{normal},
 the iterates $(g_a^n)$ converge locally uniformly to
a constant map with value $\xi = e_1 + \cdots + e_p$
which is a maximal tripotent in $\partial D$. On the other hand, if $J =\{j: \alpha_j > 0\}$ is a proper
subset of $\{1, \ldots, p\}$, then
$$\lim_n g_a^n(x) = \sum_{j \in J} e_j  + \sum_{j\notin J} \beta_je_j \in
e  + D_e$$
where $e = \sum_{j \in J} e_j$ is a tripotent in $\partial D$ and $D_e= V_0(e) \cap D$.
It follows that, in this case, the image of every limit function $h$ of $(g_a^n)$ is the whole boundary component
$e  + D_e$ since for any $ e+z \in e+D_e$ with $z\in D_e$ and spectral decomposition $z = \sum_{j\notin J} \beta_j u_j$,
we have $h(\sum_{j\in J} \alpha_j e_j + \sum_{j\notin J} \beta_j u_j) = e+\sum_{j\notin J} \beta_j u_j$.
\end{exam}


\begin{thebibliography}{1}

\bibitem{a} M. Abate, {\it Horospheres and iterates of holomorphic
maps},  Math. Z. \textbf{198} (1988) 225-238.

\bibitem{ba} G. Bassanelli, {\it On horospheres and holomorphic
endomorfisms of the Siegel disc},  Rend. Sen. Mat. Univ.
Padova, \textbf{70} (1983) 147-165.

\bibitem{b} A.F. Beardon, {\it The dynamics of contractions},
Ergod. Th. $\&$ Dynam. Sys. \textbf{17} (1997) 1257-1266.

\bibitem{bud} M. Budzy\'nska, T. Kuczumow and S. Reich,
{\it A Denjoy-Wolf theorem for compact holomorphic mappings in complex Banach
spaces}, Ann. Acad. Sci. Fenn. Math. \textbf{38} (2013) 747-756.

\bibitem{bu} R.B. Burckel, Iterating analytic self-maps on discs,
Amer. Math. Monthly \textbf{88} (1981) 396-407.

\bibitem{chen}  G. N. Chen,
{\it Iteration for holomorphic maps of the open unit ball and the generalized
upper  half-plane  of $\mathbb{C}^n$},  J.  Math.  Anal.  Appl.
 \textbf{98} (1984)  305-313.

\bibitem{book} C-H. Chu, Jordan structures in geometry
and analysis (Cambridge Tracts in Math. \textbf{190}, Cambridge Univ.
Press, Cambridge, 2012).

\bibitem{lie} C-H. Chu, \textit{Iteration of holomorphic maps on
Lie balls}, Adv. Math. \textbf{264} (2014) 114-154.

\bibitem{cm1} C-H. Chu and P. Mellon,  {\it Iteration of compact
holomorphic maps on a Hilbert ball},  Proc. Amer. Math. Soc.
\textbf{125} (1997) 1771-1777.

\bibitem{cr} C-H. Chu and M. Rigby, {\it Iteration of self-maps on a product of
Hilbert balls}, J. Math. Anal. Appl. \textbf{411} (2014) 773-786.

\bibitem{d} A. Denjoy, {\it Sur l'it\'eration des fonctions analytiques},
 C.R. Acad. Sc. Paris \textbf{182} (1926) 255-257.

\bibitem{eh} C. J. Earle and R. S. Hamilton, \textit{A
fixed point theorem for holomorphic mappings}, Proc. Symp. Pure Math.
\textbf{16} (1969) 61-65.

\bibitem{fb} Y. Friedman and B. Russo, {\it The Gelfand-Naimark theorem for JB*-triples}, Duke Math. J.
\textbf{53} (1986) 139-148.

\bibitem{g} K. Goebel, {\it Fixed points and invariant domains of
holomorphic mappings of the Hilbert ball},  Nonlinear
Analysis, \textbf{6} (1982) 1327-1334.

\bibitem{h} M. Herv\'e, {\it Quelques propri\'et\'es des applications
analytiques d'une boule a m dimensions dans elle-meme},  J.
Math. Pures et Appl. \textbf{42} (1963) 117-147.

\bibitem{h1} M. Herv\'e, {\it Iteration des transformations analytiques dans le bicercle-unit\'e},
Ann. Sci. Ecole Norm. Sup. \textbf{71} (1954) 1-28.

\bibitem{kk} J. Kapeluszny, T. Kuczmow and S. Reich, {\it The Denjoy-Wolff
theorem in the open unit ball of a strictly convex Banach space},
 Adv. Math. \textbf{143} (1999) 111-123.

\bibitem{kk} A. Kryczka and T. Kuczumow, {\it The Denjoy-Wolff-type theorem
for compact $K_{B_H}$-nonexpansive maps on a Hilbert ball},
Annales Univ. Marie Curie-Sklodowska Sect. A 51 (1997) 179-183.

\bibitem{kaup} W. Kaup, {\it A Riemann mapping theorem for bounded symmetric
domains in complex Banach spaces},  Math. Z. \textbf{183}
(1983) 503-529.

\bibitem{kaup1} W. Kaup, {\it On a Schwarz Lemma for Bounded Symmetric
Domains}, Math. Nachr.
\textbf{197} (1999) 51-60.

\bibitem{ks} W. Kaup and J. Sauter,\textit{ Boundary structure of
bounded symmetric domains}, Manuscripta Math. \textbf{101} (2000)
351-360.

\bibitem{lo} O. Loos, Bounded symmetric domains and Jordan pairs,
Mathematical Lectures, University of California, Irvine, 1977.

\bibitem{m} B.D. McCluer, {\it Iterates of holomorphic self-maps of the
open unit ball in $\mathbb{C}^n$},  Michigan Math. J.
\textbf{30} (1983) 97-106.

\bibitem{pauline} P. Mellon,  {\it Holomorphic invariance on
bounded symmetric domains},  J. Reine Angew. Math.
\textbf{523} (2000) 199-223.

\bibitem{n} L. Nirenberg, S. Webster and P. Yang, {\it Local boundary regularity of holomorphic
mappings}, Comm. Pure Appl. Math. 33 (1980) 305-338.

\bibitem{js} J. Sauter,\textit{ Randstrukturen beschränker symmetrischer
Gebiete}, Dissertation Tübingen 1995.

\bibitem{s} A. Stachura, {\it Iterates of holomorphic self-maps on the
unit ball of a Hilbert space},  Proc. Amer. Math. Soc.
\textbf{93} (1985) 88-90.

\bibitem{u} H. Upmeier,  Symmetric Banach manifolds and Jordan
C*-algebras (North Holland Math. Studies {\bf 104})
North Holland, Amsterdam 1985.

\bibitem{w1} J. Wolff, {\it Sur une g\'en\'ralisation d'un the\'eor\`eme de Schwarz},
 C.R. Acad. Sc. Paris \textbf{182} (1926) 918-920.

\bibitem{w} J. Wolff, {\it Sur l'it\'eration des fonctions born\'ees},
 C.R. Acad. Sc. Paris \textbf{182} (1926) 42-43 and 200-201.

\end{thebibliography}
\end{document}